\newtheorem*{corollary*}{Corollary}
\newtheorem*{conjecture*}{Conjecture}
\newtheorem*{example*}{Example}
\newtheorem*{theorem*}{Theorem}
\newtheorem*{proposition*}{Proposition}
\newtheorem{theorem}{Theorem}[section]
\newtheorem{corollary}[theorem]{Corollary}
\newtheorem{lemma}[theorem]{Lemma}
\newtheorem{proposition}[theorem]{Proposition}
\newtheorem*{claim*}{Claim}
\theoremstyle{definition}
\newtheorem{definition}[theorem]{Definition}
\newtheorem{remark}[theorem]{Remark}
\newtheorem{example}[theorem]{Example}
\theoremstyle{remark}
\numberwithin{equation}{section}
\renewcommand*\env@matrix[1][\
arraystretch]{%
  \edef\arraystretch{#1}%
  \hskip -\arraycolsep
  \let\@ifnextchar\new@ifnextchar
  \array{*\c@MaxMatrixCols c}}
\renewcommand{\mod}{\operatorname{mod}}
\newcommand{\Ext}{\operatorname{Ext}}
\newcommand{\Mirr}{\operatorname{M-irr}}
\newcommand{\Jirr}{\operatorname{J-irr}}
\newcommand{\domdim}{\operatorname{domdim}}
\newcommand{\cov}{\operatorname{cov}}
\newcommand{\cocov}{\operatorname{cocov}}
\newcommand{\gldim}{\operatorname{gldim}}
\newcommand{\End}{\operatorname{End}}
\newcommand{\pdim}{\operatorname{pdim}}
\newcommand{\idim}{\operatorname{idim}}
\newcommand{\row}{\operatorname{row}}
\newcommand{\gr}{\operatorname{grade}}
\newcommand{\cogr}{\operatorname{cograde}}
\newcommand{\Tr}{\operatorname{Tr}}
\newcommand{\Hom}{\operatorname{Hom}}
\renewcommand{\top}{\operatorname{\mathrm{top}}}
\newcommand{\soc}{\operatorname{\mathrm{soc}}}
\newcommand{\op}{\operatorname{op}}
\begin{document}

\title{Distributive lattices and Auslander regular algebras}
\date{\today}

\subjclass[2010]{Primary 16G10, 16E10}

\keywords{distributive lattices, incidence algebras, Auslander regular algebras, order dimension, global dimension, rowmotion bijection}

\author{Osamu Iyama}
\address{Graduate School of Mathematical Sciences, University of Tokyo, 3-8-1 Komaba Meguro-ku Tokyo 153-8914, Japan}
\email{iyama@ms.u-tokyo.ac.jp}

\author{Ren\'{e} Marczinzik}
\address{Institute of algebra and number theory, University of Stuttgart, Pfaffenwaldring 57, 70569 Stuttgart, Germany}
\email{marczire@mathematik.uni-stuttgart.de}

\begin{abstract}
Let $L$ denote a finite lattice with at least two points and let $A$ denote the incidence algebra of $L$.
We prove that $L$ is distributive if and only if $A$ is an Auslander regular ring, which gives a homological characterisation of distributive lattices. In this case, $A$ has an explicit minimal injective coresolution, whose $i$-th term is given by the elements of $L$ covered by precisely $i$ elements.
We give a combinatorial formula of the Bass numbers of $A$. We apply our results to show that the order dimension of a distributive lattice $L$ coincides with the global dimension of the incidence algebra of $L$. Also we categorify the rowmotion bijection for distributive lattices using higher Auslander-Reiten translates of the simple modules.
\end{abstract}

\maketitle
\tableofcontents

\section*{Introduction}
The notion of Gorenstein rings is basic in algebra \cite{M,BH}. They are commutative Noetherian rings $R$ whose localization at prime ideals have finite injective dimension. One of the characterisations of Gorenstein rings due to Bass \cite{Ba} is that, for a minimal injective coresolution 
$$0 \rightarrow R \rightarrow I^0 \rightarrow I^1 \rightarrow \cdots \rightarrow I^i \rightarrow \cdots $$
of the $R$-module $R$ and each $i\ge0$, the term $I^i$ is a direct sum of the injective hull of $R/{\mathfrak p}$ for all prime ideals ${\mathfrak p}$ of height $i$.
In this case, $I^i$ has flat dimension $i$, and this property plays an important role in the study of non-commutative analogues of Gorenstein rings. A noetherian ring $A$, which is not necessarily commutative, satisfies the \emph{Auslander condition} if there exists an injective coresolution 
$$0 \rightarrow A \rightarrow I^0 \rightarrow I^1 \rightarrow \cdots \rightarrow I^i \rightarrow \cdots $$
of the right module $A$ such that the flat dimension of $I^i$ is bounded by $i$ for all $i \geq 0$. This condition is left-right symmetric, and there are other equivalent conditions \cite{FGR}. 
If $A$ has additionally finite injective dimension, $A$ is said to be \emph{Auslander--Gorenstein}, and if $A$ has finite global dimension, $A$ is said to be \emph{Auslander regular}.
These classes of algebras play important roles in various areas, including homological algebra \cite{AB,AR,AR2,GN1,GN2,H,HI,I,IwS,Mi,Ta}, non-commutative algebraic geometry \cite{ATV,L,VO,YZ}, analytic $D$-modules \cite{B}, combinatorial commutative algebra \cite{Y}, Auslander--Reiten theory \cite{ARS,AS,CIM,I2,I3,I4,IS} and tilting theory \cite{CS,IyZ,NRTZ,PS}. 
Many well studied classes of rings are Auslander regular, for example enveloping algebras of finite dimensional Lie algebras, Weyl algebras and the ring of $\mathbb{C}$-linear differential operators on an irreducible smooth subvariety of affine space, see \cite[Chapter 3]{VO} for proofs and more examples.
For survey articles on rings with the Auslander condition we refer to \cite{B2} and \cite{C}.

The incidence algebras of lattices are a central topic in combinatorics, see for example \cite{Ai} and \cite{St}. We refer to \cite{OS} for ring theory of the incidence algebras of posets, to \cite{S} for representation theory of incidence algebras, and to \cite{Hi} for commutative ring theory related to distributive lattices.
In this article we give a new link between the theory of distributive lattices and the theory of Auslander regular rings:
\begin{theorem*}[=Theorems \ref{homologicaldimensionsandausreg}, \ref{firstmainresult}]
Let $L$ be a finite lattice with incidence algebra $A$. Then $L$ is distributive if and only if $A$ is an Auslander regular algebra.
\end{theorem*}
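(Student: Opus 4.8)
The plan is to treat the two implications separately, after one common reduction. Since the Hasse quiver of a finite poset has no oriented cycles, $A=kL$ is a finite–dimensional algebra of finite global dimension, so ``$A$ is Auslander regular'' is equivalent to ``$A$ satisfies the Auslander condition'', and flat dimension may be replaced throughout by $\pdim$. I identify right $A$-modules with commutative representations of the Hasse quiver of $L$: the indecomposable projective $P_x=e_xA$ is then the thin module supported on the principal filter $\{y\in L:y\ge x\}$, and the indecomposable injective $I_x=D(Ae_x)$ (with $D=\Hom_k(-,k)$) is the thin module supported on the principal ideal $\{y\in L:y\le x\}$, so that $\top P_x=\soc I_x=S_x$. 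Counting multiplicities in a minimal injective coresolution $0\to A\to I^0\to I^1\to\cdots$ of the right module $A$ shows that $I^i=\bigoplus_{x\in L}I_x^{\,\mu_i(x)}$ with $\mu_i(x)=\dim_k\Ext^i_A(S_x,A)$; since $\pdim_A I_x$ does not depend on $i$, the constraints ``$\pdim_A I^i\le i$'' collapse to the single requirement
\[
\pdim_A I_x\ \le\ \gr_A(S_x)\qquad\text{for all }x\in L,\qquad \gr_A(S_x):=\min\{i\ge 0:\Ext^i_A(S_x,A)\ne 0\}.
\]
Both directions are measured against this inequality.

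For ``$L$ distributive $\Rightarrow$ $A$ Auslander regular'' I would use Birkhoff's representation theorem to write $L\cong J(P)$, the lattice of order ideals of the poset $P$ of join-irreducibles, so that the elements covering an order ideal $D$ are exactly the $D\cup\{p\}$ with $p$ minimal in $P\setminus D$, their number being some $j=j(D)$, and so that every interval of $L$ is again of this form. The goal is to establish the explicit minimal injective coresolution announced in the introduction: that $\mu_i(x)\ne 0$ precisely when $i=j(x)$, and that $\pdim_A I_x\le j(x)$. For the first statement one computes $\Ext^\bullet_A(S_x,A)$ as the cohomology of the complex $\Hom_A(P_\bullet,A)$ of projective left modules attached to the minimal projective resolution $P_\bullet\to S_x$; using that open intervals of a distributive lattice have the homotopy type dictated by the antichains of $P$ (Boolean ranges contributing spheres, everything else collapsing), a direct computation shows this complex is exact in degrees $<j(x)$ and nonzero in degree $j(x)$. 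For the second statement one uses that $I_x$ is the thin module on the ideal $\downarrow x$, so the kernel of its projective cover $P_{\hat 0}\twoheadrightarrow I_x$ is the thin module on the filter $L\setminus\downarrow x$, whose minimal elements are exactly the $j(x)$ covers of $x$; iterating this and exploiting the Koszul-type structure of these filters in the distributive case bounds $\pdim_A I_x$ by $j(x)$. I would carry both computations out by induction on $|P|$: deleting a maximal element $p$ of $P$ splits $L$ into the ideal $J(P\setminus\{p\})$ and a complementary filter, each isomorphic to the lattice of order ideals of a smaller poset, exhibiting $A$ as a triangular matrix algebra over two incidence algebras of smaller distributive lattices; after an analysis of the connecting bimodule this reduces everything to the inductive hypothesis.

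For the converse I argue by contraposition: assume $L$ is not distributive; I must produce an $x$ with $\pdim_A I_x>\gr_A(S_x)$. By Birkhoff's forbidden-sublattice theorem $L$ contains a sublattice isomorphic to $M_3$ or to $N_5$. The real obstacle is that a sublattice of $L$ is neither a subalgebra nor a quotient of $A$, so this input does not transfer to $A$ mechanically; I would localise it. Among all intervals $[u,v]$ of $L$ that fail to be distributive, pick one of minimal length. Every proper interval of $[u,v]$ is then distributive; since an $M_3$- or $N_5$-sublattice of $[u,v]$ must have bottom $u$ and top $v$ (otherwise it would lie in a proper, hence distributive, subinterval), a short case analysis pins down $[u,v]$ to a very restricted type — an $M_k$ with $k\ge 3$ (a rank-two interval with at least three atoms), or a small non-graded lattice all of whose proper intervals are distributive. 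Because $[u,v]$ is convex, the incidence algebra $k[u,v]=e_{[u,v]}Ae_{[u,v]}$ is a corner of $A$; the decisive technical point is to show that convexity forces the Auslander condition to descend from $A$ to $k[u,v]$ — for instance by realising $[u,v]$ as a filter inside the ideal $\downarrow v$ and controlling Auslander regularity along the resulting quotient-then-corner passage. Granting this, it remains only to check by hand that the incidence algebra of a minimally non-distributive lattice violates the displayed inequality at a suitable vertex: for $M_k$ with $k\ge 3$ the $k$ atoms make the order complex of the open interval $(u,v)$ disconnected, and the resulting homology is incompatible with $\pdim_A I_v\le\gr_A(S_v)$; for a non-graded minimally bad lattice the two chain-lengths between $u$ and $v$ produce the analogous mismatch. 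The genuinely delicate steps, where I expect most of the work to lie, are this combinatorial reduction to a minimal bad interval and the descent of the Auslander condition along the convex corner $e_{[u,v]}Ae_{[u,v]}$; the remainder is bookkeeping with thin modules and order complexes.
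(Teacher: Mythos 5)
Your reduction of the Auslander condition to the inequality $\pdim I(x)\le\gr S_x$ for all $x$ is correct, and your plan for the forward direction is broadly compatible with the paper's (the paper builds an explicit Koszul-type projective resolution of each $I(x)$, valid for \emph{any} lattice, with terms $P(\bigvee S)$ indexed by subsets $S$ of the antichain $\min([m,x]^c)$, and gets minimality in the distributive case from the uniqueness of irredundant join-irreducible decompositions; your order-complex/induction-on-$|P|$ sketch is plausible but not carried out). The genuine gap is in the converse. Your strategy hinges on two unproved steps: (a) that the Auslander condition descends from $A$ to the corner algebra $e_{[u,v]}Ae_{[u,v]}$ of a convex interval, and (b) a classification of minimally non-distributive intervals. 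Step (a) is the crux and you explicitly defer it ("Granting this"); it is not a standard fact that Auslander regularity passes to $eAe$, and for incidence algebras the only route I see to it is essentially the theorem being proved (intervals of distributive lattices are distributive), so the argument is circular as it stands. Step (b) is also shakier than you claim: lattices all of whose proper intervals are distributive need not be small — pentagons with arbitrarily long sides give an infinite family — and in the $M_k$ case the violation does not occur at $v$ (there $I(v)\cong P(u)$ is projective-injective, so $\pdim I(v)=0$); it occurs at an atom $a$, where $\pdim I(a)=2>\gr S_a=1$.

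The paper avoids both problems by never localising: it takes a pentagon or diamond sublattice of $L$ (which need not be convex or an interval), names the auxiliary cover elements $d_i,c_i$ adjacent to it, and argues directly in $A$. Concretely, it shows that the cokernel of the injective envelope $P(b)\hookrightarrow P(m)$ of a suitable indecomposable projective has $S_{c_2}$ in its socle, so $I(c_2)$ is a summand of $I^1$ in the minimal injective coresolution of $A$, and then shows $\pdim I(c_2)\ge 2$ because the kernel of its projective cover $P(m)\to I(c_2)$ contains two incomparable paths and hence is not cyclic, while every projective submodule of $P(m)$ is cyclic. Thus $A$ fails already to be $2$-Gorenstein. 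If you want to salvage your route, you would either have to prove the corner-descent statement independently (which I do not believe is available) or convert your "minimal bad interval" analysis into a direct computation inside $A$ along these lines; as written, the converse direction is not a proof.
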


More strongly, for a distributive lattice $L$, we construct explicitly a minimal injective coresolution
$$0 \rightarrow A \rightarrow I^0 \rightarrow I^1 \rightarrow \cdots \rightarrow I^i \rightarrow \cdots $$
of the right $A$-module $A$ such that $I^i$ is a direct sum of indecomposable injective $A$-modules $I(x)$ corresponding to elements $x\in L$ with $|\cov(x)|=i$ when $\cov(x)$ denotes the set of covers of $x$ in $L$, and we prove that $\pdim I(x)=|\cov(x)|$ holds for each $x\in L$ when $I(x)$ denotes the indecomposable injective module corresponding to $x$ (Theorem \ref{homologicaldimensionsandausreg}). In particular, each non-zero direct summand of $I^i$ has projective dimension precisely $i$, and hence $A$ belongs to a distinguished class of Auslander regular algebras called \emph{diagonal Auslander regular} (Definition \ref{define Auslander}).
Moreover, we give a combinatorial formula of the Bass numbers of $A$ (Corollary \ref{Bass number}).

One of the fundamental results in the theory of finite posets is Dilworth's Theorem
stating that the order dimension of a distributive lattice $L$ is equal to the width
of the poset of its join-irreducible elements, see
\cite[Theorem 1.2]{D} and \cite[Theorem 8.7, Chapter 2]{Tr}.  Our next main result gives a homological characterisation of the order dimension for distributive lattices:
\begin{theorem*}[=Theorem \ref{gldimdistlattice}]
Let $L$ be a finite distributive lattice with at least two elements. Then the order dimension of $L$ equals the global dimension of the incidence algebra of $L$.
\end{theorem*}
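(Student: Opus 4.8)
The plan is to combine the explicit homological description of $A$ furnished by Theorem~\ref{homologicaldimensionsandausreg} with Birkhoff's representation theorem and Dilworth's theorem: Theorem~\ref{homologicaldimensionsandausreg} expresses $\gldim A$ as a purely combinatorial invariant of $L$, and Birkhoff's and Dilworth's theorems identify that invariant with the order dimension.

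The first step is to compute $\gldim A$ in terms of $L$. By Theorem~\ref{homologicaldimensionsandausreg}, the indecomposable injective $A$-module $I(x)$ satisfies $\pdim I(x) = |\cov(x)|$ for every $x \in L$. Since $A$ is Auslander regular by Theorem~\ref{firstmainresult}, it has finite global dimension, and for a finite-dimensional algebra of finite global dimension the global dimension equals the projective dimension of its minimal injective cogenerator $D(A) \cong \bigoplus_{x \in L} I(x)$ (indeed, $\gldim A \ge \pdim D(A)$ trivially, and the reverse inequality follows by induction along a minimal injective coresolution of an arbitrary module, each of whose terms lies in $\add D(A)$). Hence
$$\gldim A \;=\; \pdim D(A) \;=\; \max_{x \in L} \pdim I(x) \;=\; \max_{x \in L} |\cov(x)|.$$

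The second step identifies $\max_{x \in L} |\cov(x)|$ with the width of $P := \Ji(L)$. By Birkhoff's representation theorem, $L$ is isomorphic to the lattice $J(P)$ of order ideals of $P$ ordered by inclusion, so I may assume $L = J(P)$. For an order ideal $D \in J(P)$, another order ideal $D'$ covers $D$ precisely when $D' = D \cup \{p\}$ for some minimal element $p$ of $P \setminus D$; hence $|\cov(D)| = |\min_P(P \setminus D)|$. As $\min_P(P \setminus D)$ is an antichain of $P$, this gives $|\cov(D)| \le \mathrm{width}(P)$ for all $D$. Conversely, given an antichain $C \subseteq P$ of maximal cardinality, the complement of the up-set generated by $C$ is an order ideal $D$ with $\min_P(P \setminus D) = C$, so $|\cov(D)| = |C| = \mathrm{width}(P)$. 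Thus $\max_{D \in J(P)} |\cov(D)| = \mathrm{width}(P)$, and combining with the first step, $\gldim A = \mathrm{width}(\Ji(L))$. By Dilworth's theorem (\cite[Theorem~1.2]{D}, \cite[Theorem~8.7, Chapter~2]{Tr}) this width equals the order dimension of $L$, which completes the proof.

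Since essentially all of the homological content is already carried out in Theorem~\ref{homologicaldimensionsandausreg}, the only genuine points here are the elementary translation, via Birkhoff's theorem, between cover sets in $L$ and antichains of $\Ji(L)$, together with the standard identity $\gldim A = \pdim D(A)$ for algebras of finite global dimension; the hypothesis $|L| \ge 2$ simply ensures $\Ji(L) \ne \emptyset$, so that both sides are positive and the statement is non-vacuous. If preferred, the first step can instead be phrased through $\gldim A = \idim A_A$ and read off directly as the length of the minimal injective coresolution of $A_A$ given in Theorem~\ref{homologicaldimensionsandausreg}.
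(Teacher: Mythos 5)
Your proposal is correct, and its skeleton is the same as the paper's: reduce everything to the identity $\gldim A=\max_{x\in L}|\cov(x)|$ and then invoke Dilworth's theorem. The two halves are handled slightly differently, though. For the homological half the paper works with simple modules: Corollary \ref{corollaryperfectdistlattice} gives $\pdim S_x=|\cov(x)|$ (via minimality, for distributive $L$, of the resolutions in Corollary \ref{resolutioninjectivesandsimplesgenerallattice2}), and then \eqref{gldim=suppdsimple} yields $\gldim A=\max_x\pdim S_x$ with no further argument; you instead use $\pdim I(x)=|\cov(x)|$ from Theorem \ref{homologicaldimensionsandausreg}(2) together with $\gldim A=\pdim D(A)$ for algebras of finite global dimension, which you justify correctly by induction along minimal injective coresolutions. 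Your route bypasses Corollary \ref{corollaryperfectdistlattice} entirely, at the cost of this (standard but not completely free) extra identity. For the combinatorial half the paper cites Dilworth's theorem directly in the form ``order dimension $=\max_{x\in L}|\cov(x)|$'' (Theorem \ref{Dilworththeorem}), whereas you cite it in the width form and supply the Birkhoff translation $\max_{x}|\cov(x)|=\mathrm{width}(\Ji(L))$ yourself; your argument there (covers of an order ideal $D$ correspond to $\min(P\setminus D)$, and every maximum antichain is realized as $\min(P\setminus D)$ for $D$ the complement of the up-set it generates) is correct and is essentially the content of Proposition \ref{maxboundproposition}(1)(ii), making your write-up somewhat more self-contained on the combinatorial side.
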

Two important applications of this result are that the global dimension of the incidence algebra of a distributive lattice is independent of the field, a result that is in general not true for arbitrary posets, and the property for a distributive lattice to be planar can be characterised purely homological, namely by having global dimension at most two.

We will categorify the rowmotion bijection for distributive lattices by using certain functors appearing in the representation theory of the incidence algebras. The \emph{rowmotion bijection} for a distributive lattice $L$, given as the set of order ideals of a poset $P$, is defined as $\row(x)$ being the order ideal generated by the minimal elements in $P\setminus x$ for an order ideal $x$ of $P$. The rowmotion bijection has appeared under many different names and is a useful tool in the study of combinatorial properties of posets arising for example in Lie theory, see for example the references \cite{Str} and \cite{TW} that also contain a historical background on the rowmotion bijection.

To categorify the rowmotion map we will use a canonical bijection, called the \emph{grade bijection}, between simple modules, that exists for any Auslander regular algebra $A$ by \cite[Theorem 2.10]{I}. This bijection associates to a simple $A$-module $S$ the simple $A$-module $\top(D \Ext_A^g(S,A))$, when $g=\gr S$ denotes the grade of $S$. It has an important representation theoretic meaning, see \ref{grade bijection for auslander algebra}.
For diagonal Auslander regular algebras, the grade bijection can be described by the \emph{$r$-Auslander-Reiten translate} \cite{I4}
\[\tau_r:=  D\Tr\Omega^{r-1}\]
for $r\ge1$ since we have $D \Ext_A^g(S,A) \cong \tau_g(S)$ for all simple modules $S$ with grade $g$, where $\Tr$ is the classical Auslander--Bridger transpose \cite{AB}. We summarize the main results on the rowmotion bijection for distributive lattices in the next theorem:

\begin{theorem*}[=Theorem \ref{rowmotion=AR}]
Let $A$ be the incidence algebra of a distributive lattice $L$. Then we have
\[\top(\tau_{p_x}(S_x))=S_{\row(x)}\ \mbox{ for each $x\in L$ and the number $p_x$ of covers of $x$.}\]
Moreover, each simple $A$-module $S_x$ satisfies $\gr S_x=\pdim S_x =|\cov(x)|$ (i.e.\ $S_x$ is perfect in the sense of \cite{BH}).
\end{theorem*}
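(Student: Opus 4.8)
The plan is to compute $\tau_{p_x}(S_x)$ explicitly, using Birkhoff's theorem $L\cong J(P)$ for a finite poset $P$ to translate the combinatorics into statements about order ideals of $P$. Under this identification an element $x\in L$ is an order ideal of $P$, its upper covers in $L$ biject with the minimal elements $m_1,\dots,m_{p_x}$ of $P\setminus x$ (so $p_x=|\cov(x)|$), and $S\mapsto x_S:=x\cup\{m_i:i\in S\}$ is an isomorphism from the Boolean lattice on $\{1,\dots,p_x\}$ onto the interval $[x,x_{\max}]$, where $x_{\max}:=x\cup\{m_1,\dots,m_{p_x}\}$ (equal to $\bigvee\cov(x)$ when $p_x\ge1$, and to $x$ when $p_x=0$). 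I write $e_{u,v}$ ($u\le v$) for the standard basis of $A$, $e_z:=e_{z,z}$, $P_z=e_zA$ and $Ae_z$ for the indecomposable projective right, respectively left, $A$-module at $z$, and $z_i:=x_{\max}\setminus\{m_i\}$ for the $i$-th lower cover of $x_{\max}$ inside $[x,x_{\max}]$.

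First I would show each $S_x$ is perfect. The equality $\gr S_x=p_x$ is immediate from the minimal injective coresolution $0\to A\to I^\bullet$ of Theorem~\ref{homologicaldimensionsandausreg}: since $\soc I(z)=S_z$ we have $\Hom_A(S_x,I^i)\ne 0$ exactly for $i=|\cov(x)|=p_x$, so $\Hom_A(S_x,I^\bullet)$ is concentrated in degree $p_x$, whence $\Ext_A^i(S_x,A)=0$ for $i\ne p_x$ while $\Ext_A^{p_x}(S_x,A)\ne 0$. For $\pdim S_x=p_x$ I would write down the ``Koszul complex'' of the Boolean interval $[x,x_{\max}]$,
\[
0\longrightarrow\bigoplus_{|S|=p_x}P_{x_S}\longrightarrow\cdots\longrightarrow\bigoplus_{|S|=1}P_{x_S}\longrightarrow P_x\longrightarrow S_x\longrightarrow 0,
\]
whose differentials are the Koszul-signed sums of the canonical maps $P_{x_S}\to P_{x_T}$ (left multiplication by $e_{x_T,x_S}$, for $T\subset S$ with $|S\setminus T|=1$), and verify exactness one vertex at a time: applying $\Hom_A(P_y,-)=(-)e_y$ turns the augmented complex, for $x\subseteq y$, into the augmented simplicial chain complex of the full simplex on $\{i:m_i\in y\}$ (and into $0$ for $x\not\subseteq y$), which is acyclic unless that index set is empty — and minimality of the $m_i$ forces the index set to be empty together with $x\subseteq y$ precisely when $y=x$. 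This gives $\pdim S_x=p_x=\gr S_x$, i.e.\ $S_x$ is perfect in the sense of \cite{BH}.

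For the main identity, since $A$ is diagonal Auslander regular we have $\tau_{p_x}(S_x)\cong D\Ext_A^{p_x}(S_x,A)$ (reading $\tau_0(S_x):=D\Hom_A(S_x,A)$ when $p_x=0$), so $\top(\tau_{p_x}(S_x))\cong D\bigl(\soc\Ext_A^{p_x}(S_x,A)\bigr)$, the socle being that of the left $A$-module $\Ext_A^{p_x}(S_x,A)$. Applying $\Hom_A(-,A)$ to the resolution above — which has cohomology only in degree $p_x$ — yields
\[
\Ext_A^{p_x}(S_x,A)\;\cong\;Ae_{x_{\max}}\big/\sum_{i=1}^{p_x}Ae_{z_i,x_{\max}},
\]
the $i$-th component of the presenting map being right multiplication by $e_{z_i,x_{\max}}$. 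A $k$-basis of the right-hand side is given by the classes $[e_{w,x_{\max}}]$ with $w\le x_{\max}$ and $w\not\le z_i$ for all $i$; the latter condition means $\{m_1,\dots,m_{p_x}\}\subseteq w$, and since $w$ is then an order ideal with $\{m_1,\dots,m_{p_x}\}\subseteq w\subseteq x\cup\{m_i\}$ it must have the form $w=x'\cup\{m_1,\dots,m_{p_x}\}$ for an order ideal $x'$ satisfying $d\subseteq x'\subseteq x$, where $d:=\{q\in P:q<m_i\text{ for some }i\}$. Since $e_{v,w}\in\rad A$ acts on this quotient by sending $[e_{w,x_{\max}}]$ to $[e_{v,x_{\max}}]$ (which vanishes unless $v$ again has this form), its socle is spanned by the classes indexed by the minimal such $w$; as $\{x':d\subseteq x'\subseteq x\}=[d,x]$ has the single minimum $d$, this socle is one-dimensional, equal to $S^{\op}_{w_0}$ with $w_0:=d\cup\{m_1,\dots,m_{p_x}\}$. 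Finally $w_0=\{q\in P:q\le m_i\text{ for some }i\}$ is precisely the order ideal of $P$ generated by the minimal elements of $P\setminus x$, that is $w_0=\row(x)$, and therefore $\top(\tau_{p_x}(S_x))\cong D(S^{\op}_{\row(x)})=S_{\row(x)}$.

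The homological input needed — Auslander regularity of $A$, the explicit minimal injective coresolution of $A$, and the isomorphism $\tau_g(S)\cong D\Ext_A^g(S,A)$ for diagonal Auslander regular algebras — is all available from the earlier sections, so the real obstacle is the two combinatorial steps: producing the minimal projective resolution of $S_x$ explicitly enough to control $\Ext_A^{p_x}(S_x,A)$ as a left $A$-module and not merely as a vector space, and then running the Birkhoff dictionary carefully enough to recognise the socle of the resulting quotient of $Ae_{x_{\max}}$ as the simple module at the vertex $\row(x)$.
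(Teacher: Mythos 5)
Your proof is correct, but it takes a genuinely different route from the paper's for the main identity. The paper quotes the grade-bijection theorem \ref{diagausregbijection} (so it knows a priori that $\soc\Ext_A^g(S_x,A)$ is simple) and then only has to locate that socle; this it does by a support argument, reading off from the Bass-number formula of Corollary \ref{Bass number} that $\Ext^g_A(S_x,P(y))\neq0$ forces $\min(P\setminus x)\subseteq\max(y)$, hence $y\ge\row(x)$, while $y=\row(x)$ does occur because $\min(P\setminus x)=\max(\row(x))$. You instead compute $\Ext_A^{p_x}(S_x,A)$ outright as the left module $Ae_{x_{\max}}/\sum_i Ae_{z_i,x_{\max}}$ from the explicit Koszul resolution of $S_x$ (your resolution is the one of Corollary \ref{resolutioninjectivesandsimplesgenerallattice2}, re-derived in the Birkhoff model with the same tensor-of-Koszul-complexes exactness check as Theorem \ref{antichainmoduleresolution}), and then identify its socle by hand via the interval $[d,x]$; so the simplicity of the socle and the value $\row(x)$ both fall out of the computation, and neither \cite[Theorem 2.10]{I} nor Corollary \ref{Bass number} is needed. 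What your approach buys is a self-contained argument and an explicit description of the whole module $\tau_{p_x}(S_x)\cong D\Ext_A^{p_x}(S_x,A)$ rather than just its top; what the paper's approach buys is brevity and the fact that it generalises verbatim to any situation where the Bass numbers are known. For the perfectness statement your sandwich (grade $=p_x$ from the support of the minimal injective coresolution of $A$, $\pdim\le p_x$ from the resolution, $\gr\le\pdim$ always) replaces the paper's minimality argument via unique irredundant join decompositions (Corollary \ref{corollaryperfectdistlattice}) together with Proposition \ref{gradeproposition}; this works, with one small caveat: the assertion $\Hom_A(S_x,I^{p_x})\neq0$ needs the extra observation that $I(x)$ really occurs in the coresolution of $A$ (e.g.\ it occurs in that of $P(\row(x))$, or one can instead get $\Ext_A^{p_x}(S_x,A)\neq0$ from your own cokernel computation), a one-line fix rather than a genuine gap.
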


\section{Preliminaries}
In this article all rings will be finite dimensional algebras over a field $K$ and all modules will be finitely generated right modules unless otherwise stated. 
\subsection{Preliminaries on representation theory and homological algebra}
We assume that the reader is familiar with the basics on representation theory of finite dimensional algebras and refer for example to the textbooks \cite{ARS} and \cite{SkoYam}. We remark that we adopt the convention of the book \cite{SkoYam} on path algebras to multiply arrows $\alpha_1 , ... , \alpha_n$ in a path algebra $KQ$ of a quiver $Q$ from left to right, that is $\alpha_1 \cdots \alpha_n$ denotes the path composed of $\alpha_1$ first and $\alpha_n$ last. We denote by $D=\Hom_K(-,K)$ the duality of $\mod A$ for a finite dimensional algebra $A$, $J$ the Jacobson radical of $A$ and $A^{\op}$ the opposite algebra of $A$.
Being a finite dimensional algebra implies that a finite dimensional module is projective if and only if it is flat and the flat dimension of such a module coincides with the projective dimension of the module, see for example \cite[Proposition 4.1.5]{W}. We denote by $\pdim M$ the projective dimension of a module $M$ and $\idim M$ denotes the injective dimension. 
The \emph{global dimension} of a finite dimensional algebra $A$ is defined by
\[\gldim A= \sup \{ \pdim M\mid M \in \mod A \}\]
and since $A$ has a duality this is also equal to $\sup \{ \idim M\mid M \in \mod A \}$. Clearly
\begin{equation}\label{gldim=suppdsimple}
\gldim A= \sup \{ \pdim S \mid S \ \text{is simple} \}
\end{equation}
holds, cf. \cite[Theorem 5.72 (Chapter 2)]{Lam}. 

\begin{definition}\label{define Auslander}
An algebra $A$ with minimal injective coresolution 
\begin{equation}\label{minimal injective resolution}
0 \rightarrow A \rightarrow I^0 \rightarrow I^1 \rightarrow \cdots \end{equation}
of the right $A$-module $A$ is said to be \emph{$n$-Gorenstein} for $n \geq 1$ if $\pdim I^i \leq i$ for all $i=0,...,n-1$. $A$ is called \emph{Auslander regular} if it is $n$-Gorenstein for all $n \geq 1$ and additionally has finite global dimension. An Auslander regular algebra $A$ is called \emph{diagonal Auslander regular} if $\pdim X=i$ for all $i\ge0$ when $X$ is a non-zero direct summand of $I^i$. 
\end{definition}

Thus an algebra $A$ is 1-Gorenstein if and only if the injective envelope $I^0$ of $A$ is projective.
The notion of being $n$-Gorenstein and Auslander regular is left-right symmetric. 
The notion of diagonal Auslander regular algebras was introduced in \cite{I3}.

The \emph{dominant dimension} $\domdim A$ of an algebra $A$ is a refined notion of $n$-Gorensteiness. It is defined as the infimum of $i\ge0$ such that $I^i$ in \eqref{minimal injective resolution} is not projective. Thus an algebra $A$ with $\domdim A\ge n$ is $n$-Gorenstein, and the converse is true for $n=1$.
By the Morita-Tachikawa correspondence an algebra $A$ has dominant dimension at least two if and only if $A \cong \End_B(M)$ for another algebra $B$ and a generator-cogenerator $M$ of $\mod B$. We refer to \cite{Ta} for more on the Morita-Tachikawa correspondence and the dominant dimension of algebras, and to \cite{ARS,AS,I4,IS} for their importance in representation theory.

We recall homological invariants closely related to $n$-Gorensteiness.
The \emph{grade} of a module $M$ over an algebra $A$ is defined as
\[\gr M:=\inf \{ \ell \geq 0 \mid \Ext_A^\ell(M,A) \neq 0 \}\]
and dually the \emph{cograde} is defined as
\[\cogr M:= \inf \{ \ell \geq 0 \mid \Ext_A^\ell(D(A),M) \neq 0 \}.\]
If a module $M$ has finite projective dimension, we always have $\gr M \leq \pdim M$, see for example \cite[Lemma 5.5 (Chapter VI)]{ARS}.
A module $M$ is often called \emph{perfect} if $\pdim M=\gr M< \infty$. The study of perfect modules is a classical topic in commutative algebra, e.g.\ \cite[Section 1.4]{BH}. 
The following is an important property of diagonal Auslander regular algebras.

\begin{proposition} \label{gradeproposition}
Let $A$ be an algebra such that $A$ and $A^{\op}$ are diagonal Auslander regular. Then each simple $A$-module $S$ satisfies $\gr S=\pdim S$ and $\cogr S=\idim S$.
\end{proposition}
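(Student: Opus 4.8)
The plan is to reduce both equalities to a single statement about diagonal Auslander regular algebras and then to prove that statement by combining minimality of the injective coresolution with a biduality argument.

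\textbf{Reduction.} The two assertions are dual to one another. For a simple right $A$-module $S$, the $K$-duality $D\colon\mod A\to\mod A^{\op}$ is exact, interchanges projectives and injectives, preserves minimality of resolutions, and satisfies $\Ext^\ell_A(D(A),S)\cong\Ext^\ell_{A^{\op}}(DS,A^{\op})$; hence $\idim_A S=\pdim_{A^{\op}}(DS)$ and $\cogr_A S=\gr_{A^{\op}}(DS)$. Since $DS$ ranges over all simple $A^{\op}$-modules and $A^{\op}$ is again diagonal Auslander regular, it therefore suffices to prove: for every diagonal Auslander regular algebra $B$ and every simple $B$-module $S$ one has $\gr_B S=\pdim_B S$. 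Applying this to $B=A$ gives the first equality, and applying it to $B=A^{\op}$ together with the reduction gives the second.

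\textbf{Concentration of $\Ext^\bullet_B(S,B)$.} Let $0\to B\to I^0\to\cdots\to I^n\to0$ be the minimal injective coresolution of $B$, which is finite since $\idim B\le\gldim B<\infty$. Using only minimality, I would show that all differentials of the complex $\Hom_B(S,I^\bullet)$ vanish: any nonzero map $S\to I^i$ has image in $\soc I^i$, and $\soc I^i$ is contained in the essential submodule $\ker(I^i\to I^{i+1})=\operatorname{im}(I^{i-1}\to I^i)$, so the composite $S\to I^i\to I^{i+1}$ is zero. Hence $\Ext^i_B(S,B)\cong\Hom_B(S,I^i)$, which is nonzero exactly when the indecomposable injective hull $I(S)$ is a direct summand of $I^i$. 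By the diagonal hypothesis every such $i$ equals $\pdim I(S)$, so $I(S)$ occurs in exactly one $I^i$; it occurs in at least one because $\gr_B S\le\pdim_B S<\infty$ forces some $\Ext^i_B(S,B)\ne0$. Consequently $\Ext^\bullet_B(S,B)$ is concentrated in the single cohomological degree $g:=\gr_B S$.

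\textbf{Conclusion via biduality.} Set $M:=\Omega^g_B S$. By dimension shifting, $\Ext^i_B(M,B)\cong\Ext^{i+g}_B(S,B)=0$ for all $i\ge1$. Applying $\Hom_B(-,B)$ to a finite projective resolution $0\to Q_e\to\cdots\to Q_0\to M\to0$ produces an exact sequence $0\to\Hom_B(M,B)\to\Hom_B(Q_0,B)\to\cdots\to\Hom_B(Q_e,B)\to0$; reading it from the right and repeatedly splitting off projective summands shows that $\Hom_B(M,B)$ is projective. Thus $\operatorname{RHom}_B(M,B)$ is represented by the projective module $\Hom_B(M,B)$, and since $\pdim_B M<\infty$ the biduality isomorphism $M\cong\operatorname{RHom}_{B^{\op}}(\operatorname{RHom}_B(M,B),B)$ identifies $M$ with the projective $B$-module $\Hom_{B^{\op}}(\Hom_B(M,B),B)$. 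Therefore $\Omega^g_B S$ is projective, i.e.\ $\pdim_B S\le g=\gr_B S$; together with the always-valid inequality $\gr_B S\le\pdim_B S$ this gives $\gr_B S=\pdim_B S$.

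I expect the crux to be the middle step — recognising that the diagonal condition forces $\Ext^\bullet_B(S,B)$ into a single cohomological degree — together with the bookkeeping of left/right modules and the boundary cases in the reduction (especially $g=0$, where the argument must directly yield that $S$ itself is projective). The minimality computation and the closing biduality argument are then essentially formal.
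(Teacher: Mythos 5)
Your proof is correct and is essentially the paper's argument in dual form: the paper derives the same concentration phenomenon by reading the minimal projective resolution of $D(A)$ against $\Ext_A^\bullet(D(A),S)$ (each indecomposable projective occurs in exactly one term because $A^{\op}$ is diagonal), whereas you read the minimal injective coresolution of $B$ against $\Ext_B^\bullet(S,B)$ and transfer the cograde statement by $K$-duality, exactly mirroring the paper's ``the first one is dual''. The only divergence is your closing biduality step, which is an avoidable detour: once $\Ext_B^\bullet(S,B)$ is concentrated in degree $g=\gr_B S$, the fact that $\pdim_B M=\sup\{i\ge0\mid\Ext_B^i(M,B)\neq0\}$ for modules of finite projective dimension (the statement from \cite[Lemma 5.5, Chapter VI]{ARS} whose dual the paper invokes) yields $\pdim_B S=g$ at once, so no analysis of $\Omega^g_B S$ is needed.
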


\begin{proof}
We prove the second equality since the first one is dual.
First recall that for any module $M$ of finite injective dimension, $\idim M = \sup \{ i \geq 0 \mid \Ext_A^i(D(A),M) \neq 0 \}$, by the dual of \cite[Lemma 5.5 (Chapter VI)]{ARS}.
Since we assume that $A$ has finite global dimension as an Auslander regular algebra, any simple module has finite injective dimension and $\cogr S \leq \idim S$. 
Let $(P_i)$ be a minimal projective resolution of $D(A)$ and $S$ a simple $A$-module.
Then $\Ext_A^r(D(A),S) \neq 0$ if and only if the projective cover $T$ of $S$ is a direct summand of $P_r$, see for example \cite[Corollary 2.5.4]{Ben}.  
But since we assume that $A^{\op}$ is diagonal Auslander regular, every indecomposable term of $P_i$ has injective dimension equal to $i$. Thus $T$ appears in exactly one of the terms $P_i$ as a direct summand, let us say in term $P_r$.
Then $\Ext_A^r(D(A),S) \neq 0$ and $\Ext_A^t(D(A),S)=0$ for all $t \neq r$.
Thus $\cogr S=\idim S=r$.
\end{proof}

We denote by $\Omega^1(M)$ the \emph{first syzygy} of $M$, that is the kernel of the projective cover map of $M$. Dually, $\Omega^{-1}(M)$ denotes the \emph{first cosyzygy} of $M$, that is the cokernel of the injective envelope map of $M$. One defines inductively for $n \geq 2$: $\Omega^n(M):=\Omega^1(\Omega^{n-1}(M))$ and $\Omega^{-n}(M):=\Omega^{-1}(\Omega^{-(n-1)}(M))$. We set $\Omega^0(M)=M$.

The \emph{Auslander-Bridger transpose} $\Tr M$ of a module $M$ over a finite dimensional algebra with minimal projective presentation 
$$P_1 \xrightarrow{f} P_0 \rightarrow M \rightarrow 0$$
is defined as cokernel of the map $\Hom_A(f,A)$ so that we obtain the following exact sequence by applying the functor $\Hom_A(-,A)$ to the minimal projective presentation of $M$:
$$0 \rightarrow \Hom_A(M,A) \rightarrow \Hom_A(P_0,A) \xrightarrow{\Hom_A(f,A)} \Hom_A(P_1,A) \rightarrow \Tr M \rightarrow 0.$$
The \emph{$r$-Auslander-Reiten translate} is defined as $\tau_r(M):=D\Tr\Omega^{r-1}(M)$ for $r \geq 1$, where $\tau=\tau_1$ is the classical Auslander-Reiten translate.
We refer to \cite{ARS} and \cite{I4} for more on the $r$-Auslander-Reiten translates.
The next lemma follows immediately from the definition of $\Ext$ and the Auslander-Bridger transpose: 
\begin{lemma} \label{extlemma}
Let $A$ be a finite dimensional algebra and $M$ a module of finite projective dimension $p \geq 1$.
Then $\Ext_A^p(M,A) \cong \Tr\Omega^{p-1}(M)$ as left $A$-modules and $D\Ext_A^p(M,A) \cong \tau_p(M)$ as right $A$-modules.
\end{lemma}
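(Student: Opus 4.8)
The plan is to compute both sides from one fixed minimal projective resolution of $M$ and observe that the resulting modules coincide. First I would fix a minimal projective resolution
\[0 \to P_p \xrightarrow{d_p} P_{p-1} \xrightarrow{d_{p-1}} \cdots \to P_1 \xrightarrow{d_1} P_0 \to M \to 0\]
of $M$, which has length exactly $p$ since $\pdim M = p$; by exactness at $P_p$ the map $d_p$ is injective, so $\Omega^p(M) = \operatorname{im} d_p \cong P_p$ is projective and $\Omega^{p+1}(M) = 0$. (For $p = 1$ the map $d_1$ is the map called $f$ in the definition of $\Tr$.)

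The one point that genuinely needs an argument is that the truncation $P_p \xrightarrow{d_p} P_{p-1} \to \Omega^{p-1}(M) \to 0$ is a \emph{minimal} projective presentation of $\Omega^{p-1}(M)$, so that it may legitimately be used to compute $\Tr\Omega^{p-1}(M)$. Here $P_{p-1} \to \Omega^{p-1}(M)$ is a projective cover because the chosen resolution of $M$ is minimal, and $P_p \to \ker\big(P_{p-1} \to \Omega^{p-1}(M)\big) = \Omega^p(M)$ is an isomorphism (it is onto by construction and injective because $d_p$ is), hence a projective cover; minimality of the presentation follows.

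With this in place both definitions unwind directly. Applying $\Hom_A(-,A)$ to the minimal presentation of $\Omega^{p-1}(M)$, the Auslander--Bridger transpose $\Tr\Omega^{p-1}(M)$ is by definition the cokernel of $\Hom_A(d_p,A)\colon \Hom_A(P_{p-1},A) \to \Hom_A(P_p,A)$, a left $A$-module. On the other hand $\Ext_A^p(M,A)$ is the $p$-th cohomology of the complex obtained by applying $\Hom_A(-,A)$ to the deleted resolution $0 \to P_p \to \cdots \to P_0 \to 0$; since $\pdim M = p$ there is no term in degree $p+1$, so this cohomology is exactly $\operatorname{coker}\Hom_A(d_p,A)$. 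Therefore $\Ext_A^p(M,A) \cong \Tr\Omega^{p-1}(M)$ as left $A$-modules, and applying $D$ and invoking the definition $\tau_p(M) = D\Tr\Omega^{p-1}(M)$ gives $D\Ext_A^p(M,A) \cong \tau_p(M)$ as right $A$-modules. I do not expect a real obstacle: the content is just the standard identification of $\Ext^p$ with the top cohomology of a $\Hom$-complex and of the transpose with a cokernel, and the only care needed is the minimality verification in the second paragraph together with bookkeeping of the left/right module structures.
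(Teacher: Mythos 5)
Your argument is correct and is exactly the computation the paper has in mind: the paper offers no written proof (it states the lemma ``follows immediately from the definition of $\Ext$ and the Auslander--Bridger transpose''), and your unwinding --- identifying $\Ext_A^p(M,A)$ with the top cokernel $\operatorname{coker}\Hom_A(d_p,A)$ of the dualized minimal resolution and checking that the truncation is a minimal projective presentation of $\Omega^{p-1}(M)$ so that this cokernel is $\Tr\Omega^{p-1}(M)$ on the nose --- is the standard way to make that assertion precise. The minimality verification you single out is indeed the only point requiring care, since a non-minimal presentation would only give the transpose up to projective summands.
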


\subsection{Preliminaries on lattice theory}
Lattices are a central topic in mathematics and there are several textbooks dedicated to the theory of lattices, see for example \cite{Bi} and \cite{G}.
For applications of lattices in combinatorics we refer to the textbooks \cite{Ai} and \cite{St}.
We assume that all posets in this article are finite. 

For a subset $X$ of a poset $P$ we denote by $\min(X)$ the set of minimal elements in $X$ and dually $\max(X)$ denotes the set of maximal elements in $X$. When $\min(X)$ consists of a unique element, we identify the set $\min(X)$ with this element and dually we identify $\max(X)$ with its unique element if there is a unique element in $\max(X)$. 
We call an element $M$ in a poset $P$ a \emph{global maximum} if $M \geq x$ for all $x \in P$ and we call an element $m$ a \emph{global minimum} if $m \leq x$ for all $x \in P$. Following \cite{G}, we call a poset \emph{bounded} if it has a global maximum and a global minimum. We denote a global maximum usually by $M$ and a global minimum by $m$. Recall that a poset is by definition a \emph{lattice} if any two elements $a$ and $b$ have a unique supremum, denoted by $a \lor b$, and a unique infimum, denoted by $a \land b$.
Recall that a lattice $L$ is called \emph{distributive} if one has $(x \land y) \lor (x \land z)= x \land (y \lor z)$ for all elements $x,y,z \in L$.
A \emph{sublattice} $T$ of a lattice $L$ is by definition a nonempty subset of $L$ such that $a,b \in T$ implies that $a \lor b \in T$ and $a \land b \in T$. For elements $s,t$ in a poset $P$ we say that $t$ \emph{covers} $s$ or $s$ is \emph{covered by} $t$ if $s < t$ and no element $u \in P$ satisfies $s < u < t$.
An element $p$ of a lattice $L$ is called \emph{join-irreducible} if $p$ is not the minimum $m$ of $L$ and for all $x,y \in L$ one has that $p= x \lor y$ implies that $p=x$ or $p=y$. \emph{Meet-irreducible} elements are defined dually. The \emph{Hasse diagram} or \emph{Hasse quiver} of a poset $P$ is by definition the quiver with vertices the elements of $P$ and a directed arrow from $i \in P$ to $j \in P$ iff $j$ covers $i$.
For an element $x \in L$ we denote by $\cov(x)$ the set of elements that cover $x$ and by $\cocov(x)$ the elements that are covered by $x$. Given two partial orders $\leq$ and $\leq^{*}$ on a finite set $X$, we say that $\leq^{*}$ is a \emph{linear extension} of $\leq$ exactly when $\leq^{*}$ is a total order and for every $x$ and $y$ in $X$, if $x \leq y$, then $x \leq^{*} y$.

The \emph{incidence algebra} of a poset $P$ is by definition the $K$-algebra over a field $K$ with $K$-basis given by $p_y^x$ for all elements $x,y \in P$ with $x \leq y$ and multiplication on basis elements given by $p_{x_2}^{x_1} p_{y_2}^{y_1}=p_{y_2}^{x_1}$ if $x_2=y_1$ and $p_{x_2}^{x_1} p_{y_2}^{y_1}=0$ else. In this article we will use the description of incidence algebras by quiver and relations, which is the most suitable for our purposes. 
That this description is equivalent to the usual definition of incidence algebras is easy to see and explained for example in \cite[Example 10 (Chapter 14.1)]{S}. Now let $P$ be a poset and $Q$ be the Hasse quiver of $P$ and $KQ$ its path algebra over the field $K$. Let $I$ the ideal of $KQ$ generated by all differences $w_1-w_2$ of two parallel paths $w_1$ and $w_2$ of length at least two that start at the same point and end at the same point. Then $A=KQ/I$ is isomorphic to the incidence algebra of $P$. The quiver $Q$ of the algebra is acyclic and thus the global dimension of $A$ is finite. When the points of a quiver algebra are $1,...,n$ then we denote by $e_i$ the primitive idempotents corresponding to $i$ and by $S_i$ the simple modules corresponding to $i$. We denote by $P(i)=e_i A$ the indecomposable projective modules corresponding to $i$ and by $I(i)=D(Ae_i)$ the indecomposable injective modules corresponding to $i$.
We denote by $p_{y}^{x}$ the arrow from $x$ to $y$ in the quiver of $Q$ when $x$ is covered by $y$. For general elements $a,b \in P$ we denote the unique path from $a$ to $b$ by $p_{b}^{a}$ if $a \leq b$ with $e_i=p_{i}^i$.
We give two examples:
\begin{example} \label{chainexample}
Let $P=C(n)$ be a chain with $n \geq 2$ elements numbered from 0 to $n-1$ such that 0 is the smallest and $n-1$ the largest element.
Then the quiver $Q$ of the incidence algebra $A$ looks as follows:
\[\begin{tikzpicture}[scale=0.9]

\node (v1) at (-20,22) {${\scriptstyle 0}$};

\node (v2) at (-18,22) {${\scriptstyle 1}$};

\node (v6) at (-16,22) {${\scriptstyle 2}$};

\node (v8) at (-14.5,22) {${\scriptstyle n-2}$};

\node (v10) at (-12.5,22) {${\scriptstyle n-1}$};

\node (v11) at (-15.5,22) {$\ \ \ {\scriptstyle \cdots}$};

\draw [->] (v1) edge node[above] {${\scriptstyle p_1^0}$} (v2);

\draw [->] (v2) edge node[above] {${\scriptstyle p_2^1}$} (v6);

\draw [->] (v8) edge node[above] {${\scriptstyle p_{n-1}^{n-2}}$} (v10);

\end{tikzpicture}\]
We have $A=KQ$ since the ideal $I$ is zero here.
It is elementary to check that $A$ is diagonal Auslander regular with $\gldim A=1$.
\end{example}

Note that $C(n)$ is actually the unique lattice such that the incidence algebra has global dimension equal to 1, since a quiver algebra of the form $KQ/I$ with acyclic $Q$ has global dimension at most equal to 1 if and only if $I=0$ for an admissible ideal $I$.
\begin{example}
Let $P$ be the Boolean lattice on two elements and $A$ its incidence algebra. Thus $P$ has 4 points, namely $\{1,2 \}, \{1 \} , \{2\}$ and the empty set $\emptyset$.
$Q$ looks as follows:
\[\begin{tikzpicture}[scale=1]
\node (A) at (0,0) {${\scriptstyle\emptyset}$};
\node (B) at (-1,-1) {${\scriptstyle\{1\}}$};
\node at (-2,-1) {};
\node (C) at (0,-2) {${\scriptstyle\{1,2\}}$};
\node (D) at (1,-1) {${\scriptstyle\{2\}}$};
\draw[->] (A) --node[left=.7em,above]{${\scriptstyle p_{ \{1\} }^{\emptyset}}$}(B);
\draw[->] (B) --node[left=1em,below]{${\scriptstyle p_{ \{1,2\} }^{\{1\}}}$}(C);
\draw[->] (A) --node[right=.7em,above]{${\scriptstyle p_{ \{2\} }^{\emptyset}}$}(D);
\draw[->] (D) --node[right=1em,below]{${\scriptstyle p_{ \{1,2\} }^{\{2\}}}$}(C);
\end{tikzpicture}\]
We have $I= \langle p_{ \{2 \} }^{\emptyset} p_{ \{1,2 \} }^{ \{2\} } - p_{ \{1 \} }^{\emptyset} p_{ \{1,2 \} }^{ \{1\} } \rangle$ and $A=KQ/I$.
It is elementary to check that $A$ is diagonal Auslander regular with $\gldim A=2$.

More generally, the incidence algebra $A$ of the Boolean lattice $P$ on $n$ elements is diagonal Auslander regular with $\gldim A=n$ as we will see later as a consequence of our main results.
\end{example}

The \emph{pentagon} is the lattice with a Hasse diagram of the following form: 
\[\begin{tikzpicture}[scale=0.7]
\node (A) at (0,0) {$\circ$};
\node (B) at (-1,-1) {$\circ$};
\node at (-2,-1.5) {};
\node (C) at (-1,-2) {$\circ$};
\node (D) at (0,-3) {$\circ$};
\node (E) at (1,-1.5) {$\circ$};
\draw[->] (A) -- (B);
\draw[->] (B) -- (C);
\draw[->] (C) -- (D);
\draw[->] (A) -- (E);
\draw[->] (E) -- (D);
\end{tikzpicture}\]
The \emph{diamond} is the lattice with a Hasse diagram of the following form:
\[\begin{tikzpicture}[scale=0.7]

\node (v1) at (-18,23.5) {$\circ$};

\node (v2) at (-18,22) {$\circ$};

\node (v3) at (-18,20.5) {$\circ$};

\node (v4) at (-19.5,22) {$\circ$};

\node (v5) at (-16.5,22) {$\circ$};

\draw [->] (v1) edge (v2);

\draw [->] (v2) edge (v3);

\draw [->] (v1) edge (v4);

\draw [->] (v4) edge (v3);

\draw [->] (v1) edge (v5);

\draw [->] (v5) edge (v3);

\end{tikzpicture}\]

We will need the following characterisation of distributive lattices.

\begin{theorem}{ \cite[Theorem 1 (Section 7)]{G}}\label{distributivecriterion}
A lattice is distributive if and only if it does not contain a pentagon or a diamond as a sublattice.
\end{theorem}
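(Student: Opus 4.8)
The plan is to prove the two implications separately: the forward direction is essentially immediate, while the reverse direction goes through the classical two-step reduction via modular lattices. For the forward direction, first observe that every sublattice of a distributive lattice is again distributive, since the identity $(x\land y)\lor(x\land z)=x\land(y\lor z)$ is inherited by any subset closed under $\lor$ and $\land$. It then suffices to verify that the pentagon and the diamond are themselves not distributive. Writing the elements of the pentagon as $0<a<1$ and $0<b<c<1$ with $a$ incomparable to both $b$ and $c$, one computes $c\land(a\lor b)=c\land 1=c$ while $(c\land a)\lor(c\land b)=0\lor b=b\neq c$. Writing the bottom of the diamond as $0$, the top as $1$, and the three pairwise incomparable middle elements as $a,b,c$, one computes $a\land(b\lor c)=a\land 1=a$ while $(a\land b)\lor(a\land c)=0\lor 0=0\neq a$. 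Hence a distributive lattice can contain neither as a sublattice.

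For the reverse direction, assume $L$ contains no sublattice isomorphic to the pentagon or the diamond. The first step is to show that $L$ is \emph{modular}, meaning that $a\le c$ implies $a\lor(b\land c)=(a\lor b)\land c$ for all $b$; note that $a\lor(b\land c)\le(a\lor b)\land c$ always holds. If $L$ were not modular, I would pick $a\le c$ and $b$ with $a\lor(b\land c)<(a\lor b)\land c$ and verify that the five elements $b\land c$, $a\lor(b\land c)$, $(a\lor b)\land c$, $a\lor b$, together with $b$, form a sublattice isomorphic to the pentagon: the strict inequality, together with $a\le c$ and the absorption laws, forces these elements to be distinct and to have exactly the required meets and joins. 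This contradicts the hypothesis, so $L$ is modular.

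The second step is to show that a modular lattice $L$ with no diamond sublattice is distributive. Assuming the contrary, there are $x,y,z$ for which the distributive identity fails. Following the standard construction, set
\[d=(x\land y)\lor(y\land z)\lor(z\land x),\qquad u=(x\lor y)\land(y\lor z)\land(z\lor x),\]
\[x_1=d\lor(x\land u),\qquad y_1=d\lor(y\land u),\qquad z_1=d\lor(z\land u).\]
Repeated use of the modular law shows that $d\le x_1,y_1,z_1\le u$, that $x_1\land y_1=y_1\land z_1=z_1\land x_1=d$, and that $x_1\lor y_1=y_1\lor z_1=z_1\lor x_1=u$; and a short modular computation shows that if $d=u$ then the distributive identity would in fact hold for $x,y,z$, so that $d<u$. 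Thus $\{d,x_1,y_1,z_1,u\}$ is a genuine diamond sublattice, a contradiction. Combining the two steps, $L$ is distributive.

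The main obstacle is the second step: the six identities among $x_1,y_1,z_1$, the inequalities $d\le x_1,y_1,z_1\le u$, and the implication ``$d=u$ forces distributivity'' all unwind into chains of applications of the modular law, and at each application one must check that the element being moved across a meet or a join is comparable to the relevant term so that modularity applies. The cyclic symmetry in $x,y,z$ reduces the bookkeeping by a factor of three, but keeping track of these comparabilities is the delicate part; the pentagon step, by contrast, is a short finite check once the five candidate elements are written down.
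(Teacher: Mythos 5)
The paper does not prove this statement at all: it is quoted as a classical theorem with a citation to the lattice-theory text \cite{G}, so there is no in-paper argument to compare against. Your outline is exactly the standard proof of that classical result (the $M_3$--$N_5$ theorem) as found in the cited reference: sublattices inherit distributivity and the pentagon and diamond visibly fail it; a failure of modularity yields the pentagon $\{b\land c,\ a\lor(b\land c),\ (a\lor b)\land c,\ a\lor b,\ b\}$; and in a modular non-distributive lattice the elements $d,x_1,y_1,z_1,u$ form a diamond. Your sketch is correct as it stands: the implication ``$d=u$ forces distributivity for $x,y,z$'' follows from one application of the modular law, since $x\land u=x\land(y\lor z)$ and $x\land d=(x\land y)\lor(x\land z)$ because $(x\land y)\lor(z\land x)\le x$, and the pairwise distinctness of the five diamond elements (which you should state explicitly) follows formally from $d<u$ together with the six meet/join identities; the remaining modular-law computations you defer are routine and standard.
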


A subset $\mathcal{I}$ of a poset $P$ is called an \emph{order ideal} of $P$ if for all $x,y \in P$ we have that $x \in \mathcal{I}$ and $y \leq x$ implies $y \in \mathcal{I}$. For a subset $A \subseteq P$ the set $\mathcal{I}(A):=\{ x \in P \mid x \leq a$ for some $a \in A \}$ is called the \emph{order ideal generated by $A$}. If $A= \{ a \}$, the set $\mathcal{I}(A)$ is called a \emph{principal order ideal} and we denote it also by $\mathcal{I}(a)$.
We define for a poset $P$ the lattice $\mathcal{O}_P$ as the set of all order ideals of $P$.
Note that this is a distributive lattice with the usual intersection and union of sets.

The next theorem is fundamental in the study of distributive lattices.

\begin{theorem}{\cite[Theorem 2.5 (Chapter II.1)]{Ai}} \label{posetidealstheorem}
Let $L$ be a distributive lattice and $P$ be the subposet of join-irreducible elements of $L$ viewed as a poset with the induced order of $L$.  
Then $L$ is isomorphic to $\mathcal{O}_P$.
\end{theorem}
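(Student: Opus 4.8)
The plan is to prove this instance of Birkhoff's representation theorem by exhibiting the explicit map
\[\varphi\colon L\longrightarrow\mathcal{O}_P,\qquad \varphi(x)=\mathcal{I}(x)\cap P=\{p\in P\mid p\le x\},\]
and showing it is a lattice isomorphism. First I would establish the auxiliary fact, valid in \emph{any} finite lattice, that every $x\in L$ is the join of the join-irreducible elements below it, i.e.\ $x=\bigvee\varphi(x)$, with the convention that the empty join equals the global minimum $m$. This is proved by a minimal-counterexample argument: if $x$ is minimal among elements for which it fails, then $x$ is neither $m$ nor join-irreducible, so $x=a\vee b$ for some $a,b<x$; minimality gives $a=\bigvee\varphi(a)$ and $b=\bigvee\varphi(b)$, and since $\varphi(a),\varphi(b)\subseteq\varphi(x)$ we get $x=a\vee b\le\bigvee\varphi(x)\le x$, a contradiction. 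No distributivity is used here.

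Next I would record the easy formal properties of $\varphi$. It is well defined because $\varphi(x)$ is downward closed in $P$ by transitivity of $\le$, and it is order-preserving since $x\le y$ obviously forces $\varphi(x)\subseteq\varphi(y)$. Injectivity is immediate from the auxiliary fact: $\varphi(x)=\varphi(y)$ gives $x=\bigvee\varphi(x)=\bigvee\varphi(y)=y$. Moreover $\varphi$ \emph{reflects} the order, i.e.\ $\varphi(x)\subseteq\varphi(y)$ implies $x=\bigvee\varphi(x)\le\bigvee\varphi(y)=y$. Consequently, once surjectivity is established, $\varphi$ is an order isomorphism between the lattices $L$ and $\mathcal{O}_P$, and an order isomorphism of lattices automatically preserves $\vee$ and $\wedge$, so it is a lattice isomorphism.

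The heart of the argument — and the only place distributivity is needed — is surjectivity. Given an order ideal $\mathcal{I}\in\mathcal{O}_P$, put $x:=\bigvee\mathcal{I}\in L$; clearly $\mathcal{I}\subseteq\varphi(x)$. For the reverse inclusion, take $p\in P$ with $p\le x$. Applying the distributive law, extended from two to finitely many meetands by a trivial induction, gives
\[p=p\wedge x=p\wedge\Big(\bigvee_{q\in\mathcal{I}}q\Big)=\bigvee_{q\in\mathcal{I}}(p\wedge q).\]
Since $p$ is join-irreducible (and $\mathcal{I}\neq\emptyset$, as $p\le x$ forces $x>m$), there is some $q\in\mathcal{I}$ with $p=p\wedge q$, i.e.\ $p\le q$, whence $p\in\mathcal{I}$ because $\mathcal{I}$ is an order ideal. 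Thus $\varphi(x)=\mathcal{I}$, proving surjectivity; combined with the previous paragraph this shows $\varphi$ is the desired lattice isomorphism.

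The main obstacle is precisely this surjectivity step: one must phrase the finite distributive law carefully and handle the degenerate case $\mathcal{I}=\emptyset$, where $x=m$ and $\varphi(m)=\emptyset$ because no join-irreducible element lies below the minimum. Everything else is formal and works in an arbitrary finite lattice; distributivity is exactly the ingredient that rigidifies the map $x\mapsto\bigvee x$ into a two-sided inverse of $\varphi$.
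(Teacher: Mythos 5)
Your proof is correct. Note that the paper does not prove this statement at all: it is quoted as Birkhoff's representation theorem from Aigner's book \cite[Theorem 2.5 (Chapter II.1)]{Ai}, and your argument (every element is the join of the join-irreducibles below it, then distributivity plus join-irreducibility to show the map $x\mapsto\{p\in P\mid p\le x\}$ is onto the order ideals) is exactly the standard proof of that classical result, so there is nothing to compare beyond the harmless routine step of extending the binary notions of distributivity and join-irreducibility to finite joins, which you already flag.
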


We will also need the following well-known lemma for distributive lattices:

\begin{lemma}{\cite[Lemma 2.4 (Chapter II.1)]{Ai}} \label{irredundantdecomposition}
Let $L$ be a distributive lattice.
Then any element $a \in L$ with $a \neq m$ possesses a unique irredundant decomposition $a=y_1 \lor \cdots \lor y_r$ with pairwise different join-irreducible elements $y_i \in L$.

\end{lemma}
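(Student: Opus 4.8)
The plan is to prove existence and uniqueness separately; distributivity will be needed only for uniqueness.

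\textbf{Existence.} First I would record the standard fact that in any \emph{finite} lattice every element is a join of join-irreducible elements. Arguing by induction on the number of $x\in L$ with $x\le a$: if $a=m$ it is the empty join, and if $a\neq m$ is not join-irreducible then $a=b\lor c$ with $b,c<a$, so by induction $b$ and $c$ are joins of join-irreducibles lying below them, hence below $a$. In particular, for $a\neq m$ the family of nonempty finite sets $Y$ of join-irreducible elements with $\bigvee Y=a$ is nonempty; choosing one of minimal cardinality, minimality immediately forces the decomposition to be irredundant and the elements of $Y$ to be pairwise distinct (two equal elements, or one below the join of the others, would let us drop a term). Note that distributivity plays no role here.

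\textbf{Uniqueness.} Suppose $a=y_1\lor\cdots\lor y_r=z_1\lor\cdots\lor z_s$ are two irredundant decompositions into pairwise distinct join-irreducibles. The only place distributivity enters is through its $n$-ary form $x\land(w_1\lor\cdots\lor w_n)=(x\land w_1)\lor\cdots\lor(x\land w_n)$, obtained from the binary identity by an easy induction on $n$. Applying this with $x=y_i$, $w_k=z_k$ and using $y_i\le a$ gives
\[y_i=y_i\land a=(y_i\land z_1)\lor\cdots\lor(y_i\land z_s);\]
since $y_i$ is join-irreducible, $y_i=y_i\land z_k$ for some $k$, i.e.\ $y_i\le z_k$. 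By the symmetric argument $z_k\le y_\ell$ for some $\ell$, so $y_i\le y_\ell$; if $\ell\neq i$ this gives $y_i\le\bigvee_{j\neq i}y_j$, contradicting irredundancy, so $\ell=i$ and $y_i=z_k$. Hence every $y_i$ appears among the $z_k$ and, symmetrically, every $z_k$ among the $y_i$; as both lists are without repetitions, $\{y_1,\dots,y_r\}=\{z_1,\dots,z_s\}$, and in particular $r=s$.

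\textbf{The main point, and an alternative.} Everything above is elementary; the one conceptual obstacle is simply recognising that the $n$-ary distributive law is exactly what lets one split $y_i$ across the second decomposition so that join-irreducibility can be applied — and this hypothesis is genuinely needed, since in the diamond the top element has three distinct irredundant decompositions into its atoms. Alternatively one could deduce the lemma from Theorem \ref{posetidealstheorem}: identifying $L$ with $\mathcal{O}_P$ for $P$ the poset of join-irreducibles, the join-irreducibles of $\mathcal{O}_P$ are exactly the principal order ideals $\mathcal{I}(p)$, and the unique irredundant decomposition of an order ideal $a$ is $a=\bigcup_{p\in\max(a)}\mathcal{I}(p)$, with irredundancy reflecting the incomparability of distinct maximal elements of $a$.
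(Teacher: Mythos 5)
Your proof is correct. The paper does not prove this lemma at all --- it is quoted from the literature (\cite[Lemma 2.4 (Chapter II.1)]{Ai}) --- so there is no internal argument to compare with; what you have written is essentially the standard textbook proof: existence by induction in any finite lattice together with a minimal-cardinality choice, and uniqueness via the $n$-ary distributive law $y_i=y_i\land a=(y_i\land z_1)\lor\cdots\lor(y_i\land z_s)$ combined with join-irreducibility and irredundancy. One small caveat about your alternative route: within the paper's logic it is legitimate to deduce the lemma from Theorem \ref{posetidealstheorem}, since both are taken as black boxes, but in the cited source (and in most developments) Birkhoff's representation theorem is itself proved using this decomposition lemma, so as a self-contained proof the direct argument you give first is the right one to keep.
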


For two elements $x,y$ in a poset $P$, we denote by $[x,y]:= \{z \in P \mid x \leq z \leq y\}$ the interval between $x$ and $y$. For elements $v_1,...,v_n$ in a vector space $V$ we denote by $\langle v_1 ,..., v_n \rangle$ the vector subspace of $V$ generated by the elements $v_1 ,..., v_n$. For a vector $v$ in a vector space $V$, we denote by $v^*$ the dual vector in the dual vector space $V^*$. For a finite set $X$, we denote by $|X|$ the cardinality of $X$. For a subset $Y$ of $X$ we denote by $Y^c$ the complement of $Y$ in $X$.

\section{Antichain modules and global dimension of incidence algebras of lattices}
In this section we will give a projective resolution of certain modules in the incidence algebra $A$ of a lattice $L$ and use this to obtain upper bounds for the global dimension of $A$. 
When $m$ is the minimum of $L$ with incidence algebra $A$, the projective module $P(m)$ is the unique indecomposable projective-injective $A$-module and for every indecomposable projective module $P(i)$ there is a canonical monomorphism $P(i) \rightarrow P(m)$, this is easy to see and a special case of our later result in proposition \ref{domdimtheorem} that gives a classification of 1-Gorenstein incidence algebras for general posets.
Recall that an \emph{antichain} $C$ in a lattice $L$ is a subset $C$ of $L$ that consists of pairwise incomparable elements of $L$. We associate to an antichain $C=\{x_1,...,x_r\}$ the submodule
\[N_C:=\sum\limits_{i=1}^{r}{p_{x_i}^m A}\]
of the projective-injective indecomposable module $P(m)$, which is the right ideal of $A$ generated by the elements $p_{x_i}^m$ for $i=1,...,r$. The next proposition shows that in fact any submodule of $P(m)$ has the form $N_C$ for an antichain $C$.
\begin{proposition} \label{antichaincorrespondence}
Let $L$ be a lattice with incidence algebra $A$.
Then there is a bijection between antichains $C$ of $L$ and isomorphism classes of submodules of the indecomposable projective-injective module $P(m)$ given by associating to $C=\{x_1,...,x_r\}$ the submodule $\sum\limits_{i=1}^{r}{p_{x_i}^m A}$ of $P(m)$
\end{proposition}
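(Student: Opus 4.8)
The plan is to reduce the statement to elementary combinatorics of the finite poset $L$ by first describing the submodule lattice of $P(m)=e_mA$ concretely. Recall that $P(m)$ has $K$-basis $\{p_z^m\mid z\in L\}$, one basis vector for each element of $L$, and that for the primitive idempotent $e_z$ one has $P(m)e_z=Kp_z^m$. Thus $P(m)$ is multiplicity free: each simple module $S_z$ occurs at most once as a composition factor. From this I would deduce that every submodule of $P(m)$ is spanned by a subset of the distinguished basis: for a submodule $U\le P(m)$ the vector-space decomposition $U=\bigoplus_{z\in L}Ue_z$ together with $Ue_z\subseteq P(m)e_z=Kp_z^m$ forces $U=\langle p_z^m\mid z\in T_U\rangle$, where $T_U:=\{z\in L\mid Ue_z\neq 0\}$.

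Next I would identify which subsets $T\subseteq L$ can occur. Using the multiplication rule of the incidence algebra, namely $p_z^m\cdot p_w^z=p_w^m$ for $z\le w$ while $p_z^m$ annihilates all other basis elements of $A$, one checks that $\langle p_z^m\mid z\in T\rangle$ is a submodule if and only if $T$ is an order filter (up-set) of $L$, i.e.\ $z\in T$ and $w\ge z$ imply $w\in T$. This yields an inclusion-preserving bijection $U\leftrightarrow T_U$ between submodules of $P(m)$ and order filters of $L$.

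Then I would invoke the standard correspondence, valid since $L$ is finite, between order filters of $L$ and antichains of $L$: a filter $T$ goes to its set $\min(T)$ of minimal elements, which is an antichain, and an antichain $C$ goes to the filter $F_C:=\{z\in L\mid z\ge x\text{ for some }x\in C\}$ it generates; that these are mutually inverse — in particular $\min(F_C)=C$ — uses precisely that $C$ is an antichain. Composing with the previous step gives a bijection between antichains of $L$ and submodules of $P(m)$, and it remains to verify it is the asserted one: by the multiplication rule, $p_{x_i}^mA=\langle p_z^m\mid z\ge x_i\rangle$, hence $\sum_{i=1}^r p_{x_i}^mA=\langle p_z^m\mid z\in F_C\rangle=N_C$ is exactly the submodule attached to the filter $F_C$.

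Finally, because $P(m)$ is multiplicity free, two submodules with the same composition factors coincide, so distinct submodules of $P(m)$ are pairwise non-isomorphic; hence ``submodule of $P(m)$'' and ``isomorphism class of submodule of $P(m)$'' may be used interchangeably here, and the map $C\mapsto N_C$ is a bijection as claimed. Essentially everything is bookkeeping inside the incidence algebra together with finite-poset combinatorics; the one point that needs a genuine (if short) argument is the multiplicity-free reduction showing that every submodule of $P(m)$ is the span of a set of basis vectors, and that is where I would be most careful.
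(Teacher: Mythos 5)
Your proof is correct and follows essentially the same route as the paper: both arguments use multiplication by the idempotents $e_z$ to show that every submodule of $P(m)$ is spanned by a subset of the canonical basis $\{p_z^m\}$ closed under going up, identify it by its minimal elements (an antichain), and distinguish isomorphism classes via the composition factors, the only cosmetic difference being that you phrase the intermediate step through the standard order-filter/antichain correspondence rather than directly through antichain generating sets.
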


\begin{proof}
Let $N$ be any non-zero submodule of $P(m)$. Then $N$ is finitely generated and let $x \in N$ be a generator of $N$. Then we can write $x= \sum\limits_{i=1}^{t}{\lambda_i p_{y_i}^m}$ for some non-zero field elements $\lambda_i$ and pairwise different $y_i \in L$.
Now since $N$ is a submodule, we have $x e_{y_i}=\lambda_i p_{y_i}^m \in N$ and thus $p_{y_i}^m \in N$. This means that $N$ is generated by elements of the form $p_{x_i}^m$ for some $x_i \in L$ and thus $N=\sum\limits_{i=1}^{r}{p_{x_i}^m A}$. Now we have $p_{x_i}^m A \subseteq p_{x_j}^m A$ when $x_j \leq x_i$ and thus we can assume that the set $\{x_1,...,x_r \}$ forms an antichains, which proves that every submodule of $P(m)$ is of the form $\sum\limits_{i=1}^{r}{p_{x_i}^m A}$ for an antichain $\{x_1,...,x_r \}$.
Now note that a $K$-basis of the submodule $\sum\limits_{i=1}^{r}{p_{x_i}^m A}$ is given by $\{p_u^m | u \geq x_i \ \text{for some} \ i=1,...,r \}$. Thus the submodules $N_{C}$ and $N_{C'}$ for two antichains $C$ and $C'$ are isomorphic if and only if $C=C'$, since the composition factors of the module $N_C$ are exactly the simple modules $S_{u}$ for $u \in L$ with $u \geq x_i$ for some $x_i \in C$. \qedhere

\end{proof}

For an antichain $C$ in a lattice $L$, we define the \emph{antichain module} associated to $C$ by
\[M_C:=P(m)/N_C.\]
Note that a module $M$ is an antichain module of the form $M_C$ for some antichain if and only if $M$ has projective cover $P(m)$ by the previous proposition. We now give a projective resolution for any antichain module.

\begin{theorem} \label{antichainmoduleresolution}
Let $L$ be a lattice with incidence algebra $A$. Let $C$ be an antichain of cardinality $\ell$ with associated module $M_C$.
Then $M_C$ has a projective resolution 
\[0\to P_\ell\to\cdots\to P_0\to M_C \to0\ \mbox{ with }\ P_0=P(m) \mbox{ and } \ P_r = \bigoplus\limits_{S\subseteq C,\ |S|=r}^{}P(\bigvee S) \ \mbox{ for }\ 1\le r\le\ell.\]
\end{theorem}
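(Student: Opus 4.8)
The plan is to write down the obvious candidate complex and show it is a resolution by constructing an explicit contracting homotopy on the level of $K$-bases. Recall from Proposition \ref{antichaincorrespondence} that $N_C=\sum_{i=1}^\ell p_{x_i}^m A$ has $K$-basis $\{p_u^m \mid u\ge x_i \text{ for some } i\}$, so $M_C=P(m)/N_C$ has $K$-basis $\{p_u^m \mid u\not\ge x_i \text{ for all } i\}$. For a subset $S\subseteq C$ write $x_S:=\bigvee S$ (with $x_\emptyset=m$, so $P(x_\emptyset)=P_0=P(m)$); note $P(x_S)=e_{x_S}A$ has $K$-basis $\{p_u^{x_S}\mid u\ge x_S\}$. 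I would define the differential $d_r\colon P_r\to P_{r-1}$ by the usual alternating-sum/Koszul formula: on the summand indexed by $S=\{x_{i_1},\dots,x_{i_r}\}$ with $i_1<\cdots<i_r$, send the generator $e_{x_S}$ to $\sum_{j=1}^r(-1)^{j-1}\, p_{x_S}^{x_{S\setminus\{x_{i_j}\}}}$ inside $P_{r-1}$, i.e.\ use the canonical path from $x_{S\setminus\{x_{i_j}\}}$ up to $x_S$. The augmentation $P_0=P(m)\to M_C$ is the quotient map. One checks $d_{r-1}d_r=0$ because the incidence-algebra relations force all parallel paths to agree, so the sign pattern collapses exactly as in a Koszul complex; and $d_1$ has image $N_C$ by definition of $N_C$, giving exactness at $P_0$ and at $M_C$.

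The heart of the argument is exactness in the middle, and here the clean approach is to pass to $K$-bases and exhibit a bijection/homotopy. A $K$-basis of $P_r$ is indexed by pairs $(S,u)$ with $S\subseteq C$, $|S|=r$, and $u\ge\bigvee S=x_S$. Given such a pair, let $S(u):=\{x_i\in C\mid x_i\le u\}$; since $u\ge x_S$ and $L$ is a lattice, $x_S\le u$ forces $S\subseteq S(u)$, and $S(u)$ is exactly the set of antichain elements below $u$. Thus the basis of $\bigoplus_{r\ge 0}P_r$ (including the term $M_C$ in degree $-1$, whose basis is the $u$ with $S(u)=\emptyset$) is indexed by pairs $(S,u)$ with $S\subseteq S(u)$, and one recognizes the chain complex, for each fixed $u$, as the simplicial/augmented chain complex of the full simplex on the vertex set $S(u)$. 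This simplex is contractible when $S(u)\ne\emptyset$ and contributes only the single basis vector $p_u^m\in M_C$ when $S(u)=\emptyset$. So the total complex is exact in positive degrees and its degree $0$ homology is $M_C$, which is precisely the assertion. Concretely, fixing for each $u$ with $S(u)\ne\emptyset$ a distinguished element $x_{\iota(u)}\in S(u)$ (say the one with least index), the formula $h_r(S,u):=(S\cup\{x_{\iota(u)}\},u)$ when $x_{\iota(u)}\notin S$ and $0$ otherwise defines a contracting homotopy $hd+dh=\mathrm{id}$ on basis elements, where one must only verify that the map commutes with the $A$-action, i.e.\ respects right multiplication by paths $p_v^u$ (it does, since $S(u)\subseteq S(v)$ when $u\le v$, but one must restrict $h$ so it only uses $x_{\iota(u)}$ depending on the bottom index $u$ — this is the one subtle point).

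The main obstacle I anticipate is precisely making the homotopy $A$-linear rather than merely $K$-linear: the naive "cone on the least vertex" homotopy is defined pointwise in $u$ and a priori does not commute with multiplication by $p_v^u$ when $\iota(u)\ne\iota(v)$. I would handle this either (i) by checking that the homotopy need only be $K$-linear to conclude the complex is exact as a complex of $K$-vector spaces, which together with $d_r$ being $A$-module maps is enough to conclude exactness as $A$-modules (a complex of $A$-modules that is exact as a complex of $K$-spaces is exact); or, if one wants a genuinely $A$-linear splitting, (ii) by ordering $C=\{x_1,\dots,x_\ell\}$ once and for all and checking the filtration argument: the Koszul-type complex for $C$ is, up to the leftmost term, built from the analogous complexes for $C\setminus\{x_\ell\}$ and $C\setminus\{x_1\}$ via a mapping cone, allowing an induction on $\ell$ (the base case $\ell=1$ being the tautological $0\to P(x_1)\to P(m)\to M_C\to 0$, exact because $p_{x_1}^m A\cong P(x_1)$). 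Option (i) is the quickest; I would present that and relegate the explicit homotopy to a remark. Finally, minimality of the resolution is immediate since $d_r(P_r)\subseteq \mathrm{rad}\,P_{r-1}$: each entry $p_{x_S}^{x_{S'}}$ is a path of length $\ge 1$ (as $x_{S'}<x_S$ strictly, the join strictly increasing when a generator is added, because $C$ is an antichain).
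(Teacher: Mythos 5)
Your proof is correct and is essentially the paper's argument: the same Koszul-signed complex, with exactness verified by splitting the complex along the terminal vertex $u$ of the basis paths (equivalently, multiplying by each idempotent $e_w$, as the paper does) and recognizing each piece as the contractible augmented chain complex of the full simplex on $S(u)$ -- the paper phrases the same fact as the tensor product of copies of $k\xrightarrow{1_k}k$; and your point (i) is right that $K$-linear exactness of a complex of $A$-module maps already gives exactness as $A$-modules. One caveat: your closing minimality remark is false for a general lattice, because an element of an antichain may lie below the join of the others, so $\bigvee S'<\bigvee S$ can fail and some components of $d_r$ are identities (e.g.\ for the three atoms of the diamond, $M_C=S_m$ has projective dimension $2$ while the resolution has length $3$); the theorem does not assert minimality, and the paper establishes it only later, using distributivity.
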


\begin{proof}
Let $f_0:P_0=P(m) \to M_C$ be the natural surjection.
For two elements $y\geq z$ in $L$, we denote by $\iota(y,z):P(y) \to P(z)$ the natural inclusion.
Moreover, we fix an arbitrary total ordering of elements in $C=\{x_1,...,x_\ell\}$, and for subsets $T\subset S\subseteq C$ with $T=S\setminus\{y\}$ for $y \in C$, let $\epsilon(S,T):=(-1)^{|\{z\in S\mid z<y\}|}$.
We construct a map $f_r:P_r\to P_{r-1}$ by
\begin{align*}
&f_r:=(f_{S,T})_{S,T}:P_r=\bigoplus_{S\subseteq C,\ |S|=r}P(\bigvee S)\to P_{r-1}= \bigoplus_{T\subseteq C,\ |T|=r-1}P(\bigvee T),\\
&\mbox{where }\ f_{S,T}:=\left\{\begin{array}{ll}\epsilon(S,T)\iota(\bigvee S,\bigvee T)&\mbox{if $S\supset T$,}\\
0&\mbox{else.}\end{array}\right.
\end{align*}
Now we check $f_{r-1}f_r=0$.  This is clear for $r=1$ since $\Hom_A(P(y),M_C)=0$ holds for any $y\in C$. Assume $r\ge2$. Take a direct summand $P(\bigvee S)$ of $P_r$ and $P(\bigvee U)$ of $P_{r-2}$. Clearly, the $(S,U)$-entry of $f_{r-1}f_r$ is non-zero only when there exist $y\neq z\in S$ such that $U=S\setminus\{y,z\}$. If there exist such $y,z$, then the $(S,U)$-entry of $f_{r-1}f_r$ is 
\[\left(P(\bigvee S)\xrightarrow{f_{S,S\setminus\{y\}}}P(\bigvee S\setminus\{y\})\xrightarrow{f_{S\setminus\{y\},U}}P(\bigvee U)\right)+\left(P(\bigvee S)\xrightarrow{f_{S,S\setminus\{z\}}}P(\bigvee S\setminus\{z\})\xrightarrow{f_{S\setminus\{z\},U}}P(\bigvee U)\right),\]
which is easily shown to be zero because of the sign $\epsilon$.

To check exactness of the sequence, it suffices to show that the sequence
\begin{equation}\label{multiply e_w}
0\to P_\ell e_w\xrightarrow{f_re_w}\cdots\xrightarrow{f_1e_w} P_0e_w\xrightarrow{f_0e_w} M_C e_w\to0
\end{equation}
is exact for each $w\in L$. Let $R_w:=\{y\in C \mid y\leq w\}$. Then we have
\[M_C e_w=\left\{\begin{array}{ll}k&R_w=\emptyset\\ 0&\mbox{else}\end{array}\right.\ \mbox{ and }\ P_re_w=\bigoplus_{S\subseteq R_w,\ |S|=r}k\ \mbox{ for }\ 0\le r.\]
Thus, if $R_w=\emptyset$, then \eqref{multiply e_w} is clearly exact. In the rest, assume $R_w\neq\emptyset$.
For subsets $T\subset S\subseteq R_w$ with $|S|=r=|T|+1$, the $(S,T)$-entry of $f_{S,T}e_w:P_re_w\to P_{r-1}e_w$ is $\epsilon(S,T)\cdot 1_k$. 
Thus the sequence \eqref{multiply e_w} is isomorphic to the tensor product of $|R_w|$ copies of $k\xrightarrow{1_k}k$ (i.e.\ the Koszul complex), and hence exact.
\end{proof}

We can use the previous theorem to obtain a projective resolution of all indecomposable injective modules for incidence algebras of lattices as the next result demonstrates:

\begin{corollary} \label{resolutioninjectivesandsimplesgenerallattice}
Let $L$ be a lattice with incidence algebra $A$ and $x \in L$. 
Then $\pdim I(x) \leq |\min([m,x]^c)|=:\ell$ holds, and $I(x)$ has a projective resolution 
\[0\to P_\ell\to\cdots\to P_0\to I(x) \to0\ \mbox{ with }\ P_0=P(m) \mbox{ and } \ P_r = \bigoplus\limits_{S\subseteq\min([m,x]^c),\ |S|=r}^{}P(\bigvee S) \ \mbox{ for }\ 1\le r\le\ell.\]
\end{corollary}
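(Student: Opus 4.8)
The plan is to recognise $I(x)$ as an antichain module and then read off its projective resolution directly from Theorem~\ref{antichainmoduleresolution}. Concretely, I would show $I(x)\cong M_C$ for the antichain $C:=\min([m,x]^c)$; since the subsets $S\subseteq C$ and their joins $\bigvee S$ then reproduce exactly the projective modules appearing in the claimed resolution and $|C|=\ell$, the corollary is immediate from Theorem~\ref{antichainmoduleresolution}.

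First I would verify that $I(x)$ has projective cover $P(m)$, so that it is an antichain module by the remark preceding Theorem~\ref{antichainmoduleresolution}. Since $I(x)=D(Ae_x)$, we have $\top I(x)\cong D(\soc(Ae_x))$, so it suffices to show $\soc(Ae_x)\cong S_m$. Here one uses that $Ae_x$ has $K$-basis $\{p_x^y\mid y\le x\}$ with each $e_w(Ae_x)=Kp_x^w$ one-dimensional, so any simple submodule is spanned by some $p_x^w$ with $w\le x$; and, reading the Jacobson radical off the arrows of the Hasse quiver of $L$, one has $J\cdot p_x^w=0$ exactly when $w$ covers no element, i.e.\ $w=m$. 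Hence $\soc(Ae_x)=Kp_x^m\cong S_m$, and $I(x)\cong M_C$ for a unique antichain $C$ by Proposition~\ref{antichaincorrespondence}.

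Next I would pin down $C$ by a composition-factor count. Since $Ae_x$ has composition factors $S_y$ for $y\in[m,x]$, each with multiplicity one, so does $I(x)$; hence the kernel $N_C$ of $P(m)\twoheadrightarrow I(x)$ has composition factors exactly the $S_u$ with $u\in[m,x]^c$. On the other hand, by the basis computation inside the proof of Proposition~\ref{antichaincorrespondence}, if $C=\{x_1,\dots,x_\ell\}$ then the composition factors of $N_C$ are the $S_u$ with $u\ge x_i$ for some $i$. Comparing these, the order filter $\{u\in L\mid u\ge x_i\text{ for some }i\}$ of $L$ equals $[m,x]^c$, and since in a finite poset an order filter has a unique minimal generating antichain, namely its set of minimal elements, we get $C=\min([m,x]^c)$ and $\ell=|\min([m,x]^c)|$.

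Finally, applying Theorem~\ref{antichainmoduleresolution} to $M_C=I(x)$ with $|C|=\ell$ produces the asserted resolution $0\to P_\ell\to\cdots\to P_0\to I(x)\to0$ with $P_0=P(m)$ and $P_r=\bigoplus_{S\subseteq\min([m,x]^c),\,|S|=r}P(\bigvee S)$, and in particular $\pdim I(x)\le\ell$. The degenerate case $x=M$, the global maximum, has $[m,x]^c=\emptyset$ and $\ell=0$, and the statement reduces to the known identification $I(M)=P(m)$. I expect the only mildly delicate point to be the socle computation showing $\top I(x)\cong S_m$; the identification of $C$ and the application of Theorem~\ref{antichainmoduleresolution} are then routine bookkeeping with the results already established.
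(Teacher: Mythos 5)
Your proof is correct and takes essentially the same route as the paper: establish $\top I(x)\cong D\soc(Ae_x)\cong S_m$, identify $I(x)$ with the antichain module $M_C$ for $C=\min([m,x]^c)$, and apply Theorem~\ref{antichainmoduleresolution}, treating $x=M$ separately. The only (harmless) difference is that you determine $C$ by a composition-factor comparison using Proposition~\ref{antichaincorrespondence}, while the paper computes the kernel of the explicit projective cover $f_0(e_m a)=(p_x^m)^*a$ directly as $\langle p_q^m \mid q\in[m,x]^c\rangle$.
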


\begin{proof}
For $x=M$, we have $I(M) \cong P(m)$ and the statement is true. Now assume $x \neq M$, then we have $\top(I(x)) \cong D\soc(Ae_x) \cong S_m$ and thus $I(x)$ is an antichain module and has projective cover $f_0: P(m) \rightarrow I(x)$ given by $f_0(e_m a)= t^* a$, where $t:=p_x^m$ and $t^*$ denotes the dual vector in $D(Ae_x)=I(x)$.
The kernel $\ker(f_0)$ is given by $\ker(f_0)= \langle p_{q}^m | q \in [m,x]^c \rangle $ and thus is isomorphic to the submodule $N_C:=\sum\limits_{i=1}^{\ell}{p_{y_i}^m}$ of $P(m)$ where $\{y_1,...,y_\ell \}=\min([m,x]^c)$.
Thus $I(x) \cong M_C$ is an antichain modules corresponding to the antichain $C=\{y_1,...,y_\ell \}$ and the claim follows from \ref{antichainmoduleresolution}.
\end{proof}

We give an explicit example for the projective resolution of an indecomposable injective module:

\begin{example}
Let $L$ be a lattice with incidence algebra $A$ and let $x \in L$ such that $\min([m,x]^c)= \{y_1,y_2,y_3 \}$. 
Then a projective resolution of $I(x)$ is as follows:
$$0 \rightarrow P(y_1 \lor y_2 \lor y_3)\rightarrow P(y_1 \lor y_2)\oplus P(y_1 \lor y_3)\oplus P(y_2 \lor y_3) \rightarrow P(y_1)\oplus P(y_2)\oplus P(y_3) \rightarrow P(m) \rightarrow I(x) \rightarrow 0.$$
As an explicit example we can take $L$ to be the Boolean lattice of subsets of $\{1,2,3,4 \}$ and $x= \{4 \}$. Then $\min([\emptyset,\{4\}]^c)= \{ \{1\},\{2\},\{3\} \}$ and a projective resolution of $I(\{4\})$ is:
$$0 \rightarrow P(\{1,2,3 \})\rightarrow P(\{1,2\}) \oplus P(\{1,3\}) \oplus P(\{2,3\}) \rightarrow P(\{1\}) \oplus P(\{2\}) \oplus P(\{3\}) \rightarrow P(\emptyset) \rightarrow I(\{4\}) \rightarrow 0.$$
In this concrete example for the Boolean lattice the projective resolution is even minimal.
\end{example}

We also obtain a projective resolution of all simple modules over incidence algebras of lattices.

\begin{corollary}\label{resolutioninjectivesandsimplesgenerallattice2}
Let $L$ be a lattice with incidence algebra $A$ and $x \in L$. 
Then $\pdim S_x \leq |\cov(x)|=:\ell$ and $S_x$ has a projective resolution
\[0\to P_\ell\to\cdots\to P_0\to S_x\to0\ \mbox{ with }\ P_0=P_x \mbox{ and } \ P_r = \bigoplus\limits_{C\subseteq \cov(x),\ |C|=r}^{} P(\bigvee C)\ \mbox{ for }\ 1\le r\le\ell.\]
\end{corollary}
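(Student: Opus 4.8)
The plan is to deduce this from Theorem~\ref{antichainmoduleresolution} applied not to $L$ itself but to the principal filter $L':=[x,M]$. First I would check that $L'$ is a sublattice of $L$, hence a finite lattice in its own right with minimum element $x$: it is nonempty (containing $x$ and $M$) and closed under $\vee$ and $\wedge$, since $a\wedge b\geq x$ and $a\vee b\leq M$ whenever $a,b\geq x$. Let $B$ denote the incidence algebra of $L'$. Next I would observe that $\cov(x)$ — the same set of covers of $x$ whether computed in $L$ or in $L'$, since the interval $[x,y]$ for $y\in\cov(x)$ lies entirely in $L'$ — is an antichain, because $y\leq y'$ for $y,y'\in\cov(x)$ forces $y=y'$ by the definition of a cover. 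Finally, the antichain module over $B$ associated with $\cov(x)$ is $M_{\cov(x)}=P_B(x)/N_{\cov(x)}$, and since $N_{\cov(x)}=\sum_{y\in\cov(x)}p_y^xB=\rad P_B(x)$, this antichain module is precisely the simple $B$-module $S_x$. Applying Theorem~\ref{antichainmoduleresolution} with $\ell=|\cov(x)|$ then produces an exact sequence of $B$-modules
\[0\to Q_\ell\to\cdots\to Q_0\to S_x\to0,\qquad Q_0=P_B(x),\qquad Q_r=\bigoplus_{C\subseteq\cov(x),\ |C|=r}P_B(\bigvee C)\quad(1\le r\le\ell),\]
whose differentials are, up to sign, the canonical inclusions between projective $B$-modules.

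The second step transports this resolution from $B$ to $A$. For $u\in L'$ the right $A$-module $P(u)=e_uA$ has $K$-basis $\{p_w^u\mid w\geq u\}$, and every such $w$ again lies in $L'$; hence $e_uA=e_uAe_{L'}$ for the idempotent $e_{L'}=\sum_{v\in L'}e_v$, so that as a $K$-space $P(u)=e_uAe_{L'}=e_uB=P_B(u)$, and the canonical inclusions $P(\bigvee S)\to P(\bigvee T)$ used in the construction are left multiplications $\xi\mapsto p_{\bigvee S}^{\bigvee T}\xi$, which are visibly both $A$-linear and $B$-linear; likewise $f_0$ is the natural surjection onto $S_x$ in either interpretation. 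Thus the displayed complex has the same underlying complex of $K$-vector spaces whether regarded over $B$ or over $A$, so its exactness is inherited from Theorem~\ref{antichainmoduleresolution}; its terms $Q_r=\bigoplus_{|C|=r}P(\bigvee C)$ are projective $A$-modules (with the convention $\bigvee\emptyset=x$, so $Q_0=P(x)=P_x$), and consequently it is a projective resolution of $S_x$ over $A$ of length $\ell=|\cov(x)|$. In particular $\pdim S_x\leq|\cov(x)|$, completing the argument.

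I do not foresee a serious difficulty. The only points needing care are the identification $P_B(u)=P(u)$ for $u\in[x,M]$ — which relies precisely on $[x,M]$ being a sublattice, so that $e_uA$ is supported inside it — and the remark that a complex which is exact over $B$ and is built from maps that are simultaneously $A$-linear stays exact when read over $A$, since exactness refers only to the underlying $K$-complex. As an alternative avoiding any reduction, one can simply repeat the proof of Theorem~\ref{antichainmoduleresolution} verbatim with $m$ replaced by $x$, $L$ by $[x,M]$ and $C$ by $\cov(x)$: the map $f_0\colon P(x)\to S_x$ is the natural surjection, the higher differentials are given by the same formula with the signs $\epsilon(S,T)$, the identity $f_{r-1}f_r=0$ is verified as before, and exactness is checked by multiplying with $e_w$ for $w\in L$ — for $w\not\geq x$ every term vanishes, and for $w\geq x$ one sets $R_w=\{y\in\cov(x)\mid y\leq w\}$ and recognises the resulting complex as the tensor product of $|R_w|$ copies of $k\xrightarrow{1_k}k$, using that $R_w=\emptyset$ exactly when $w=x$ because in a lattice $x<w$ forces some cover of $x$ to lie below $w$. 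Either way the resolution obtained need not be minimal in general, just as for the injective modules in Corollary~\ref{resolutioninjectivesandsimplesgenerallattice}.
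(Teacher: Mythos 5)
Your argument is correct and follows essentially the paper's own route: both pass to the interval $L'=[x,M]$, whose incidence algebra has $x$ as its minimum, and obtain the resolution from the Koszul-type construction of Theorem~\ref{antichainmoduleresolution}, then read it over $A$. The only cosmetic difference is that you identify $S_x$ directly as the antichain module for $\cov(x)$ (and spell out why exactness over the interval's incidence algebra transfers to $A$), whereas the paper identifies $S_x$ with the injective $I^{L'}(x)$ and quotes Corollary~\ref{resolutioninjectivesandsimplesgenerallattice}.
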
 

\begin{proof}
It is clear that $S_M$ is projective. Let $x \neq M$ and let $L':=[x,M]$ be the interval from $x$ to $M$ in $L$ and note that $L'$ is also a lattice. Let $A'$ denote the incidence algebra of $L'$ and let $I^{L'}(i)$ denote the indecomposable injective $A'$-modules corresponding to the points $i \in L'$.
Note that the $A'$-modules $I^{L'}(i)$ are also $A$-modules and we have $I^{L'}(x) \cong S_x$ as $A$-modules.
A projective resolution of $I^{L'}(x)$ as an $A'$-module also gives a projective resolution as an $A$-module and thus we immediately obtain the result from \ref{resolutioninjectivesandsimplesgenerallattice}
 since in $L'$ the element $x$ is the minimum of $L'$ and thus $\min([x,x]_{L'}^c)$ (here $[a,b]_{L'}$ denotes the interval in the distributive lattice $L'$) is exactly the set of covers $\cov(x)$ of $x$ in $L$.  
\end{proof}

Now we give an upper bound of global dimension in terms of lattice structure.

\begin{theorem}
Let $L$ be a lattice with incidence algebra $A$.
Then
\[\gldim(A) \leq \max_{x\in L}|\cov(x)|.\]
\end{theorem}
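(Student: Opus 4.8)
The plan is to combine two facts already established in the excerpt. First, by the standard reduction \eqref{gldim=suppdsimple}, the global dimension of $A$ equals the supremum of the projective dimensions of the simple modules, and since $L$ is finite the simple $A$-modules are exactly the $S_x$ for $x\in L$, a finite family. Second, Corollary \ref{resolutioninjectivesandsimplesgenerallattice2} supplies an explicit finite projective resolution of each $S_x$ of length at most $|\cov(x)|$, hence $\pdim S_x\le|\cov(x)|$.

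Putting these together, I would write
\[
\gldim(A)=\sup\{\pdim S_x\mid x\in L\}\le\sup\{|\cov(x)|\mid x\in L\}=\max_{x\in L}|\cov(x)|,
\]
where the last equality uses finiteness of $L$. That completes the argument.

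There is essentially no obstacle here: the content has been pushed into the earlier results, in particular the Koszul-type projective resolution of antichain modules in Theorem \ref{antichainmoduleresolution} and its specialisation to simple modules in Corollary \ref{resolutioninjectivesandsimplesgenerallattice2}. The only minor point worth stating explicitly is that the supremum over $x\in L$ is attained and finite because $L$ is a finite lattice, so the right-hand side is a genuine maximum. One could also remark that equality need not hold in general (for non-distributive $L$), which motivates the sharper analysis in the later sections, but that is not needed for the statement at hand.
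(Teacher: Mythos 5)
Your argument is correct and is exactly the paper's proof: the theorem follows by combining \eqref{gldim=suppdsimple} with Corollary \ref{resolutioninjectivesandsimplesgenerallattice2}, just as you do. Nothing further is needed.
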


\begin{proof}
The claim follows from \eqref{gldim=suppdsimple} and \ref{resolutioninjectivesandsimplesgenerallattice2}.
\end{proof}

The following example shows that the equality in the previous theorem is not true in general.

\begin{example}
Let $L$ be the diamond lattice with minimum $x=m$ and incidence algebra $A$. $S_m$ has projective dimension two and the global dimension of $A$ is equal to two, while $m$ has three covers.
\end{example} 

\section{Homological properties of incidence algebras of distributive lattices}

\subsection{Auslander regularity and Bass numbers}
In this section we prove that the incidence algebras of distributive lattices are Auslander regular algebras and give applications. 
We assume in the following that all lattices have at least two elements. 
We denote by $\Mirr(L)$ the set of meet-irreducible elements of a lattice $L$ and by $\Jirr(L)$ the set of join-irreducible elements of $L$.

\begin{proposition} \label{maxboundproposition}
Let $L$ be a distributive lattice.
\begin{enumerate}
\item[\rm(1)]
\begin{enumerate}[\rm(i)]
\item For $x\in L$, we have $\min([m,x]^c)\subseteq\Jirr(L)$.
\item There is a bijection $\alpha_x: \min([m,x]^c) \rightarrow \cov(x)$ given by $\alpha_x(z)= z \lor x$.
\item For each subset $S\subseteq\min([m,x]^c)$, let $y:=\bigvee S$. Then $| \max([y,M]^c) | = |S|$.
\end{enumerate}
\item[\rm(2)] 
\begin{enumerate}[\rm(i)]
\item For $y\in L$, we have $\max([y,M]^c)\subseteq\Mirr(L)$.
\item There is a bijection $\beta_y : \max([y,M]^c) \rightarrow \cocov(y)$ given by $\beta_y(z) = z \land y$.
\item For each subset $S\subseteq\max([y,M]^c)$, let $x:=\bigwedge S$. Then $| \min([m,x]^c) | = |S|$.
\end{enumerate}
\end{enumerate}
\end{proposition}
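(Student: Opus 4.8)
\emph{Strategy.} The plan is to use Birkhoff's representation theorem (Theorem~\ref{posetidealstheorem}) to identify $L$ with the lattice $\mathcal{O}_P$ of order ideals of the poset $P=\Jirr(L)$, so that elements of $L$ become order ideals of $P$, $\vee$ and $\wedge$ become $\cup$ and $\cap$, the join-irreducible elements become the principal order ideals $\mathcal{I}(p)$ ($p\in P$), and the meet-irreducible elements become the order ideals of the form $\{q\in P\mid q\not\ge p\}$ ($p\in P$). Part (2) will then follow from part (1) applied to the opposite lattice $L^{\op}$, which is again distributive and under which $m$ and $M$, $\vee$ and $\wedge$, $\Jirr$ and $\Mirr$, $\min$ and $\max$, and $[m,x]^{c}$ and $[x,M]^{c}$ are interchanged (and $\alpha$ becomes $\beta$); so it suffices to prove (1).

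\emph{Parts (i) and (ii).} Fix $x\in L$ and let $I$ be the corresponding order ideal. The first step is to show that $\min([m,x]^{c})=\{\mathcal{I}(p)\mid p\in\min(P\setminus I)\}$: if $J$ is an order ideal with $J\not\subseteq I$ that is minimal such, then for any $p\in J\setminus I$ we have $\mathcal{I}(p)\subseteq J$ and $\mathcal{I}(p)\not\subseteq I$, whence $J=\mathcal{I}(p)$, and minimality forces $\mathcal{I}(p)\setminus\{p\}=\{q\mid q<p\}\subseteq I$, i.e.\ $p\in\min(P\setminus I)$; conversely each such $\mathcal{I}(p)$ is minimal in $[m,x]^{c}$. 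This gives (i) at once. For (ii), the order ideals covering $I$ are precisely the $I\cup\{p\}$ with $p\in\min(P\setminus I)$, and since $\{q\mid q<p\}\subseteq I$ we get $\alpha_{x}(\mathcal{I}(p))=\mathcal{I}(p)\cup I=I\cup\{p\}$; hence $\alpha_{x}$ is the evident bijection. (Alternatively (ii) can be argued directly inside $L$: distributivity shows that for $x\le w\le z\vee x$ one has $w=(w\wedge z)\vee x$, and minimality of $z$ forces $w\wedge z\le x$ unless $z\le w$, so $z\vee x$ covers $x$; injectivity is the same computation applied to $z_{1},z_{2}$, and surjectivity uses Lemma~\ref{irredundantdecomposition} to produce a join-irreducible element below a given cover of $x$ but not below $x$.)

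\emph{Part (iii).} Write $S=\{\mathcal{I}(p_{1}),\dots,\mathcal{I}(p_{k})\}$ with $p_{1},\dots,p_{k}\in\min(P\setminus I)$ distinct (so $k=|S|$); being distinct minimal elements of $P\setminus I$, they are pairwise incomparable. Then $y=\bigvee S$ is the order ideal generated by $\{p_{1},\dots,p_{k}\}$, and an order ideal $J$ satisfies $y\not\subseteq J$ if and only if $p_{i}\notin J$ for some $i$. For each $i$ put $J^{(i)}:=\{q\in P\mid q\not\ge p_{i}\}$; this is an order ideal with $p_{i}\notin J^{(i)}$, so $J^{(i)}\in[y,M]^{c}$, and it is maximal there because any order ideal properly containing $J^{(i)}$ must contain $p_{i}$ and, since $p_{j}\not\ge p_{i}$ gives $p_{j}\in J^{(i)}$ for $j\ne i$, must contain every $p_{j}$, hence all of $y$. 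Conversely any $J\in\max([y,M]^{c})$ misses some $p_{i}$, so $J\subseteq J^{(i)}$ and thus $J=J^{(i)}$. Since $p_{i}\notin J^{(i)}$ while $p_{i}\in J^{(j)}$ for $j\ne i$, the $J^{(i)}$ are pairwise distinct, so $|\max([y,M]^{c})|=k=|S|$.

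\emph{Main obstacle.} Parts (i) and (ii) are essentially formal once the Birkhoff dictionary is in place; the real content is (iii), where the maximal elements of $[y,M]^{c}$ must be pinned down exactly. The delicate points are to verify that each $J^{(i)}$ is maximal in \emph{all} of $[y,M]^{c}$ and not merely among ideals missing $p_{i}$ --- this is precisely where pairwise incomparability of the $p_{i}$ is used --- and that the $J^{(i)}$ are genuinely distinct; one should also confirm the degenerate cases $S=\emptyset$ and $|S|=1$, where $[y,M]^{c}$ is empty or has a single maximal element.
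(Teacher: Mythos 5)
Your argument is correct and follows essentially the same route as the paper: identify $L$ with $\mathcal{O}_P$ via Birkhoff's theorem, match $\min([m,x]^c)$ with the principal ideals $\mathcal{I}(p)$ for $p\in\min(P\setminus x)$, identify covers of $x$ with the ideals $x\cup\{p\}$, describe $\max([y,M]^c)$ as the ideals $\mathcal{J}(p_i)=\{q\mid q\not\ge p_i\}$, and obtain (2) by duality. You merely spell out a few points the paper leaves implicit (maximality of the $\mathcal{J}(p_i)$ via pairwise incomparability, their distinctness, and the degenerate cases), which is fine.
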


\begin{proof}
We prove (1), the proof of (2) is dual. We use \ref{posetidealstheorem} to represent $L=\mathcal{O}_P$ as the distributive lattice of order ideals of a poset $P$. For $p\in P$, we denote by
\[\mathcal{I}(p):=\{q\in P\mid q\le p\}\subseteq P\ \mbox{ \ the order ideal of $P$ generated by $p$ and }\ \mathcal{J}(p):=\{q\in P\mid p\not\le q\}\subseteq P.\]

(i) We regard $x\in L$ as an order ideal $x \subseteq P$.
For each $z\in\min([m,x]^c)$, take a minimal element $p$ in $z\setminus x$. Then $z=\mathcal{I}(p)$ holds since $[m,x]^c\ni I(p)\subseteq z$ and $z$ is minimal in $[m,x]^c$.
In particular, $z=\mathcal{I}(p)$ is join-irreducible since it covers a unique element $\mathcal{I}(p)\setminus\{p\}$.

(ii) By the above argument, there is a bijection $\min(P\setminus x)\to\min([m,x]^c)$ given by $p\mapsto \mathcal{I}(p)$.
On the other hand, elements in $\cov(x)$ are the order ideals $z$ of $P$ such that $z\supset x$ and $|z\setminus x|=1$. Thus there is a bijection $\min(P\setminus x)\to\cov(x)$ given by $p\mapsto x\cup\{p\}$. Since $x\cup\{p\}=x\vee \mathcal{I}(p)$, the assertion follows.

(iii) Let $S=\{I(p_i)\mid1\le i\le r\}$. Then 
\[y=\bigvee S=\bigcup_{i=1}^r \mathcal{I}(p_i).\]
We prove $\max([y,M]^c)=\{\mathcal{J}(p_i)\mid 1\le i\le r\}$. Then $| \max([y,M]^c) |=r=|S|$ holds. Clearly $\mathcal{J}(p_i)\in\max([y,M]^c)$ holds. Conversely, for each $z\in\max([y,M]^c)$, there exists $1\le i\le r$ such that $p_i\notin z$. Then $z=\mathcal{J}(p_i)$ holds since $[y,M]^c\ni \mathcal{J}(p_i)\supseteq z$ and $z$ is maximal in $[y,M]^c$.
\end{proof}


Now we state our main result.

\begin{theorem} \label{homologicaldimensionsandausreg}
Let $L$ be a distributive lattice with incidence algebra $A$ and $x,y\in L$.
\begin{enumerate}
\item[\rm(1)] $A$ is diagonal Auslander regular and has a minimal injective coresolution
$$0 \rightarrow A \rightarrow I^0 \rightarrow I^1 \rightarrow \cdots \rightarrow I^i \rightarrow \cdots $$
such that $I^i$ is a direct sum of copies of $I(x)$ for elements $x\in L$ with $|\cov(x)|=i$.
\item[\rm(2)] 
$\pdim I(x)=|\min([m,x]^c)|= |\cov(x)|=:\ell$ holds, and $I(x)$ has a minimal projective resolution 
\[0\to P_\ell\to\cdots\to P_0\to I(x) \to0\ \mbox{ with }\ P_0=P(m) \mbox{ and } \ P_r = \bigoplus\limits_{S\subseteq\min([m,x]^c),\ |S|=r}^{}P(\bigvee S) \ \mbox{ for }\ 1\le r\le\ell.\]
\item[\rm(3)] $\idim P(y)=|\max([y,M]^c)|= |\cocov(y)|=:\ell$ holds, and $P(y)$ has a minimal injective coresolution
\[0\to P(y)\to I^0\to\cdots\to I^\ell\to0\ \mbox{ with }\ I^0=I(M) \mbox{ and } \ I^r = \bigoplus\limits_{S\subseteq\max([y,M]^c),\ |S|=r}^{}I(\bigwedge S) \ \mbox{ for }\ 1\le r\le\ell.\]
\end{enumerate}
\end{theorem}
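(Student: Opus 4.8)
The plan is to extract all three statements from the projective resolution of $I(x)$ already built in Corollary~\ref{resolutioninjectivesandsimplesgenerallattice}, combined with the combinatorial bijections of Proposition~\ref{maxboundproposition} and a duality argument. The only genuinely new ingredient is that, for a distributive lattice, this resolution is \emph{minimal}.

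I would prove (2) first. The case $x=M$ is immediate since $I(M)\cong P(m)$ is projective and $\min([m,M]^c)=\cov(M)=\emptyset$. Assume $x\neq M$. Corollary~\ref{resolutioninjectivesandsimplesgenerallattice} already supplies the exact sequence with $P_0=P(m)$ and $P_r=\bigoplus_{S\subseteq\min([m,x]^c),\,|S|=r}P(\bigvee S)$, and Proposition~\ref{maxboundproposition}(1)(ii) identifies $\ell:=|\min([m,x]^c)|$ with $|\cov(x)|$; so it suffices to show this resolution is minimal, which then forces $\pdim I(x)=\ell$. By the description of the differentials $f_r$ in Theorem~\ref{antichainmoduleresolution}, minimality reduces to checking that every nonzero component $\iota(\bigvee S,\bigvee T):P(\bigvee S)\to P(\bigvee T)$, with $T=S\setminus\{y\}$, has image in $\rad P(\bigvee T)$, equivalently that $\bigvee S>\bigvee T$ strictly, i.e.\ $y\not\leq\bigvee T$. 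This is exactly where distributivity enters: by Proposition~\ref{maxboundproposition}(1)(i) the set $S$ consists of join-irreducible elements, and in a distributive lattice a join-irreducible $y$ is join-prime, since $y\leq a\vee b$ gives $y=(y\wedge a)\vee(y\wedge b)$ and hence $y\leq a$ or $y\leq b$; therefore $y\leq\bigvee T$ would force $y\leq z$ for some $z\in T$, contradicting that $S$ is an antichain, while for $T=\emptyset$ one uses instead $m\notin[m,x]^c$, so $y>m=\bigvee\emptyset$. Hence the resolution is minimal and (2) follows.

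Statement (3) I would obtain by duality. The opposite algebra $A^{\op}$ is the incidence algebra of $L^{\op}$, again distributive, and $K$-duality $D=\Hom_K(-,K)$ sends $e_zA^{\op}$ to $D(Ae_z)=I(z)$, carries minimal projective resolutions to minimal injective coresolutions, and under the passage from $L$ to $L^{\op}$ exchanges $\min\leftrightarrow\max$ and $\bigvee\leftrightarrow\bigwedge$, with $[m_{L^{\op}},y]_{L^{\op}}=[y,M]_L$ and $\cov_{L^{\op}}(y)=\cocov_L(y)$. Applying part (2) to $A^{\op}$ at $y$ and then $D$ yields precisely the asserted minimal injective coresolution of $P(y)$, with $\idim P(y)=|\max([y,M]^c)|=|\cocov(y)|$.

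Finally, for (1): writing $A=\bigoplus_{y\in L}P(y)$ as a right module, its minimal injective coresolution is the direct sum of those of the $P(y)$ from part (3), so every indecomposable summand of $I^i$ has the form $I(\bigwedge S)$ for some $y$ and some $S\subseteq\max([y,M]^c)$ with $|S|=i$. For such a summand, Proposition~\ref{maxboundproposition}(2)(iii) gives $|\min([m,\bigwedge S]^c)|=|S|=i$, hence $|\cov(\bigwedge S)|=i$ by Proposition~\ref{maxboundproposition}(1)(ii); this is the claimed description of $I^i$, and by part (2) it moreover gives $\pdim I(\bigwedge S)=i$, so every nonzero direct summand of $I^i$ has projective dimension exactly $i$. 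Since the quiver of $A$ is acyclic, $\gldim A<\infty$, whence $A$ is $n$-Gorenstein for all $n$ and therefore diagonal Auslander regular. Throughout, the one step with genuine content is the strict inequality $\bigvee S>\bigvee T$ in (2): this is the precise point where distributivity is indispensable (a pentagon or diamond sublattice, by Theorem~\ref{distributivecriterion}, would destroy the argument), everything else being bookkeeping with the resolutions and bijections already established.
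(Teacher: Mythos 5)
Your proposal is correct and follows the paper's overall route — part (2) first by showing the resolution from Corollary \ref{resolutioninjectivesandsimplesgenerallattice} is minimal, then (3) by passing to $A^{\op}$ and the opposite lattice, then (1) by summing the coresolutions of the $P(y)$ and using Proposition \ref{maxboundproposition}(2)(iii) — but your minimality argument for (2) is genuinely different from the paper's. You check directly that every component $\pm\iota(\bigvee S,\bigvee T)$ of the differentials of Theorem \ref{antichainmoduleresolution} lands in $\rad P(\bigvee T)$, i.e.\ that $\bigvee S>\bigvee T$ strictly, which you deduce from the fact that join-irreducibles in a distributive lattice are join-prime (your inline one-line proof of this via $y=(y\wedge a)\vee(y\wedge b)$ is fine), together with $\min([m,x]^c)$ being an antichain of join-irreducibles by Proposition \ref{maxboundproposition}(1)(i). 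The paper instead argues globally: by the uniqueness of irredundant join-irreducible decompositions (Lemma \ref{irredundantdecomposition}) the elements $\bigvee S$ for $S\subseteq\min([m,x]^c)$ are pairwise distinct, so the projective terms are pairwise non-isomorphic and no split exact summand can occur. Both arguments are valid and both isolate distributivity as the essential input; yours is more local (it analyses the maps and makes transparent why the differentials are radical maps), while the paper's avoids looking at the differentials at all at the cost of invoking Lemma \ref{irredundantdecomposition}. Your treatment of (3) and (1) matches the paper's, only with the duality bookkeeping spelled out; the one stylistic slip is the sentence in (1) suggesting that $n$-Gorensteinness follows from acyclicity of the quiver — it of course follows from $\pdim X=i$ for the summands $X$ of $I^i$, which you had already established, so this is a wording issue rather than a gap.
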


\begin{proof}
(2) The result is clear for $x=M$ and thus assume $x \neq M$. The projective resolution of the given form exists by \ref{resolutioninjectivesandsimplesgenerallattice} and we just have to show that it is minimal. By (1) of \ref{maxboundproposition} we know that each element in $\min([m,x]^c)$ is join-irreducible.
The elements $\bigvee S \in L$ for $S\subseteq\min([m,x]^c)$ are pairwise different since elements in a distributive lattice have a unique irredundant join-irreducible representation by \ref{irredundantdecomposition}. Thus the terms $P(\bigvee S)$ are pairwise non-isomorphic.
Since the terms $P(\bigvee S)$ are pairwise non-isomorphic, the projective resolution is minimal. Therefore, the projective dimension of $I(x)$ is equal to $|\min([m,x]^c)|$ and $|\min([m,x]^c)|= |\cov(x)|$ by \ref{maxboundproposition}.

(3) This follows from (2) by using duality since the opposite algebra $A^{\op}$ is the incidence algebra of the opposite distributive lattice of $L$.

(1) Since $A= \bigoplus_{y \in L}^{}{P(y)}$, we can use (3) to obtain a minimal injective coresolution $(I^r)$ of $A$. Each indecomposable direct summand of $I^r$ can be written as $I(x)$ for $x:=\bigwedge S$, where $S\subseteq\max([y,M]^c)$ with $|S|=r$ by (3). By (2) and Proposition \ref{maxboundproposition}(2)(iii), we have
\[\pdim I(x)=|\cov(x)|=|\min([m,x]^c)|=|S|=r.\]

Thus the assertions follow.
\end{proof}

We refer to \cite{Y} for a similar type of results for the incidence algebras of the posets associated with simplicial complexes.

Our previous results imply the following corollary, which makes Proposition \ref{gradeproposition} more explicit.

\begin{corollary} \label{corollaryperfectdistlattice}
Let $A$ be the incidence algebra of a distributive lattice $L$. Then the simple $A$-module $S_x$ corresponding to $x\in L$ satisfies
\[\gr S_x=\pdim S_x=|\cov(x)|\ \mbox{ and }\ \cogr S_x=\idim S_x=|\cocov(x)|.\]
\end{corollary}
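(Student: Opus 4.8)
The plan is to combine Proposition~\ref{gradeproposition} with the explicit resolutions already available. The statement consists of two halves, $\gr S_x=\pdim S_x=|\cov(x)|$ and $\cogr S_x=\idim S_x=|\cocov(x)|$, and these are interchanged by the duality $A\mapsto A^{\op}$: indeed $D$ sends the simple module at $x$ to the simple $A^{\op}$-module at $x$, the algebra $A^{\op}$ is the incidence algebra of the opposite lattice $L^{\op}$ (which is again distributive), and $\cov_{L^{\op}}(x)=\cocov_L(x)$. So it suffices to establish the first half and then read off the second by applying it to $A^{\op}$. For the equality $\gr S_x=\pdim S_x$ I would argue as follows: by Theorem~\ref{homologicaldimensionsandausreg}(1) the algebra $A$ is diagonal Auslander regular, and applying the same theorem to the distributive lattice $L^{\op}$ shows that $A^{\op}$ is diagonal Auslander regular as well; hence Proposition~\ref{gradeproposition} applies and gives $\gr S_x=\pdim S_x$ (and dually $\cogr S_x=\idim S_x$). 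It then remains only to identify $\pdim S_x$ with $|\cov(x)|$.

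For $\pdim S_x=|\cov(x)|=:\ell$, Corollary~\ref{resolutioninjectivesandsimplesgenerallattice2} already supplies the bound $\pdim S_x\le\ell$ together with a projective resolution $0\to P_\ell\to\cdots\to P_0\to S_x\to0$ whose $r$-th term for $r\ge1$ is $\bigoplus_{C\subseteq\cov(x),\,|C|=r}P(\bigvee C)$, so I only need this resolution to be minimal. (The case $x=M$ is trivial, since $S_M$ is projective; assume $x\ne M$.) I would pass to the interval $L':=[x,M]$, which is a sublattice of $L$, hence a distributive lattice, and has $x$ as its minimum; its covers of $x$ coincide with those of $x$ in $L$, that is, with $\min([x,x]_{L'}^c)$. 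By Proposition~\ref{maxboundproposition}(1)(i) applied to $L'$, every element of $\cov(x)$ is join-irreducible in $L'$, and since joins inside the up-set $L'$ agree with joins in $L$, Lemma~\ref{irredundantdecomposition} applied in $L'$ shows that the elements $\bigvee C$ for distinct subsets $C\subseteq\cov(x)$ are pairwise distinct. Hence the summands $P(\bigvee C)$ occurring in the resolution are pairwise non-isomorphic indecomposable projectives; and a projective resolution all of whose indecomposable summands (across all homological degrees) are pairwise non-isomorphic is automatically minimal, because otherwise its contractible direct summand would force some indecomposable projective to appear in two consecutive terms. Therefore $\pdim S_x=\ell=|\cov(x)|$, and applying the same reasoning to $A^{\op}$ and $L^{\op}$ yields $\idim S_x=|\cocov(x)|$.

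I do not expect a genuine obstacle here, since all the input results are already in hand; the only points needing care are bookkeeping ones: verifying that $L'=[x,M]$ is a distributive lattice with minimum $x$ whose covers, joins, and join-irreducible elements match those of $L$, and using the elementary fact that a projective resolution with pairwise non-isomorphic indecomposable summands is minimal. One can in fact sidestep the minimality discussion entirely by invoking Theorem~\ref{homologicaldimensionsandausreg}(2) directly for the distributive lattice $L'$, since $S_x\cong I^{L'}(x)$ as $A$-modules; this is probably the cleanest route.
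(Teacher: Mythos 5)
Your proposal is correct and follows essentially the same route as the paper: $\gr S_x=\pdim S_x$ via Proposition \ref{gradeproposition} (with $A$ and $A^{\op}$ diagonal Auslander regular by Theorem \ref{homologicaldimensionsandausreg}(1) applied to $L$ and $L^{\op}$), and $\pdim S_x=|\cov(x)|$ by passing to the interval $L'=[x,M]$ and using minimality of the resolution there, with the dual half obtained by duality. Your closing remark — invoking Theorem \ref{homologicaldimensionsandausreg}(2) for $L'$ since $S_x\cong I^{L'}(x)$ — is precisely how the paper argues, so the only difference is that you additionally spell out the join-irreducibility/minimality bookkeeping that the paper delegates to the earlier proofs.
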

\begin{proof}
We prove the first equalities since the second equalities follow dually.
Since $A$ is diagonal Auslander regular, we know by \ref{gradeproposition} that $\gr S_x=\pdim S_x$. By \ref{homologicaldimensionsandausreg} (2) the projective resolution of the indecomposable injective $A$-modules are minimal when $L$ is distributive and thus also the projective resolution of the simple modules $S_x$ in \ref{resolutioninjectivesandsimplesgenerallattice2} are minimal when $L$ is distributive as follows by the proof of \ref{resolutioninjectivesandsimplesgenerallattice2}.
Thus $\pdim S_x=|\cov(x)|$  \qedhere
\end{proof}

The equality $\gr S_x=\pdim S_x$ for simple modules $S_x$ is not true for general lattices as the next examples shows:
\begin{example}
Let $L$ be the lattice with the following Hasse quiver:
\[\begin{tikzpicture}[scale=0.7]
\node (v0) at (-19.5,28.5) {$\circ$};
\node (v1) at (-19.5,27) {$\circ$};
\node (v2) at (-18,25.5) {$\circ$};
\node (v3) at (-19.5,25.5) {$\circ$};
\node (v4) at (-21,25.5) {$\circ$};
\node (v5) at (-18,24) {$\circ$};
\node (v6) at (-19.5,24) {$\circ$};
\node (v7) at (-21,24) {$\circ$};
\node (v8) at (-19.5,22.5) {$\circ$};
\draw [->] (v0) edge (v1);
\draw [->] (v0) edge (v2);
\draw [->] (v0) edge (v4);
\draw [->] (v1) edge (v3);
\draw [->] (v2) edge (v5);
\draw [->] (v2) edge (v6);
\draw [->] (v3) edge (v5);
\draw [->] (v3) edge (v7);
\draw [->] (v4) edge (v6);
\draw [->] (v4) edge (v7);
\draw [->] (v5) edge (v8);
\draw [->] (v6) edge (v8);
\draw [->] (v7) edge (v8);
\end{tikzpicture}\]
The module $S_0$ has $\gr S_0=2$ but $\pdim S_0=3$.
\end{example}

As in the case of commutative rings \cite{BH} (see also \cite{GN1}), we study the following notion.

\begin{definition}\cite{S2}
Let $L$ be a poset and $A$ the incidence algebra of $L$. For $M\in\mod A$, let
\[0\to M\to I^0\to I^1\to\cdots\]
be a minimal injective resolution. For $x\in L$ and $i\ge0$, we define the \emph{$i$-th Bass number} $\mu^i(x,M)$ of $M$ as the multiplicity of $I(x)$ in $I^i$.
\end{definition}

We have the following description of the Bass numbers of $A$

\begin{corollary}\label{Bass number}
Let $L$ be the distributive lattice of the order ideals of a poset $P$, and $A$ the incidence algebra of $L$. For $x,y\in L$ and $i\ge0$, we have
\begin{align*}
\mu^i(x,P(y))&=\left\{\begin{array}{ll}
1&\mbox{if $i=|\cov(x)|$ and $\min(P\setminus x)\subseteq \max(y)$,}\\
0&\mbox{else.}
\end{array}\right.\\
\mu^i(x,A)&=\left\{\begin{array}{ll}
|\{y\in L\mid \min(P\setminus x)\subseteq \max(y)\}|&\mbox{if $i=|\cov(x)|$,}\\
0&\mbox{else.}
\end{array}\right.
\end{align*}
\end{corollary}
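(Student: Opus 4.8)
\textbf{Proof plan for Corollary \ref{Bass number}.}
The strategy is to extract both formulas directly from Theorem \ref{homologicaldimensionsandausreg}, translating its lattice-theoretic data into the combinatorics of the poset $P$. For the first formula, I would start from part (3) of Theorem \ref{homologicaldimensionsandausreg}, which gives an explicit minimal injective coresolution of $P(y)$: the term $I^r$ is $\bigoplus_{S\subseteq\max([y,M]^c),\ |S|=r}I(\bigwedge S)$, and by minimality $\mu^i(x,P(y))$ is exactly the number of subsets $S\subseteq\max([y,M]^c)$ with $|S|=i$ and $\bigwedge S=x$. By Lemma \ref{irredundantdecomposition} applied to the opposite lattice (meet-irreducible representations are unique in a distributive lattice), such an $S$ is unique if it exists, so $\mu^i(x,P(y))\in\{0,1\}$; it equals $1$ precisely when $x$ has a meet-irreducible decomposition $x=\bigwedge S$ with $S\subseteq\max([y,M]^c)$, and in that case $i=|S|=|\cov(x)|$ by Proposition \ref{maxboundproposition}(2)(iii) together with (1)(ii)--(iii).

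The remaining task for the first formula is therefore to rephrase ``$x$ admits a decomposition $x=\bigwedge S$ with $S\subseteq\max([y,M]^c)$'' in poset terms as ``$\min(P\setminus x)\subseteq\max(y)$''. Here I would use Theorem \ref{posetidealstheorem} to identify $L=\mathcal{O}_P$ and reuse the bijections from the proof of Proposition \ref{maxboundproposition}: writing $x$ as an order ideal, $\min(P\setminus x)=\{p_1,\dots,p_r\}$ corresponds via $p\mapsto\mathcal{I}(p)$ to $\min([m,x]^c)$, hence via the join $\bigvee$ to the covers of $x$; dually the meet-irreducible decomposition of $x$ in terms of the sets $\mathcal{J}(p_i)=\{q\in P\mid p_i\not\le q\}$ gives $x=\bigcap_i\mathcal{J}(p_i)$, and $\mathcal{J}(p_i)\in\max([y,M]^c)$ holds iff $y\subseteq\mathcal{J}(p_i)$, i.e.\ iff $p_i\notin y$ and (since $\max([y,M]^c)$ must be maximal) $p_i\in\max(y^c)$; one then checks this is equivalent to $p_i\in\max(y)$ in the sense of the statement (the set of maximal elements of the order ideal $y$). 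Assembling these equivalences over all $i$ yields $\min(P\setminus x)\subseteq\max(y)$ as the precise condition, and the index is $i=r=|\cov(x)|$.

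The second formula is then immediate: since $A=\bigoplus_{y\in L}P(y)$ and minimal injective coresolutions are additive, $\mu^i(x,A)=\sum_{y\in L}\mu^i(x,P(y))$, which by the first formula is $0$ unless $i=|\cov(x)|$, in which case it counts exactly the $y\in L$ with $\min(P\setminus x)\subseteq\max(y)$. The main obstacle I anticipate is purely bookkeeping: getting the dictionary between ``$\max([y,M]^c)$ contains $\mathcal{J}(p)$'' and ``$p\in\max(y)$'' exactly right, in particular confirming that maximality of $z$ in $[y,M]^c$ forces $z$ to have the form $\mathcal{J}(p)$ for some $p$ (this is the dual of the argument already given in the proof of Proposition \ref{maxboundproposition}(1)(iii)) and that the relevant $p$'s are precisely the maximal elements of the order ideal $y$ rather than some larger or smaller set. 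No genuinely new idea is needed beyond Theorem \ref{homologicaldimensionsandausreg} and the order-ideal description of $L$.
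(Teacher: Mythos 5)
Your overall route is the paper's route: reduce the second formula to the first by additivity of minimal injective coresolutions over $A=\bigoplus_{y\in L}P(y)$, read off $\mu^i(x,P(y))$ from the explicit minimal coresolution in Theorem \ref{homologicaldimensionsandausreg}(3), use uniqueness of irredundant meet-irreducible decompositions (together with Proposition \ref{maxboundproposition}) to get $\mu^i(x,P(y))\in\{0,1\}$ and $i=|S|=|\cov(x)|$, and then translate ``there is $S\subseteq\max([y,M]^c)$ with $\bigwedge S=x$'' into poset language via $\mathcal{J}(p)=\{q\in P\mid p\not\le q\}$, the identity $x=\bigcap_{p\in\min(P\setminus x)}\mathcal{J}(p)$, and the bijection $\max(y)\to\max([y,M]^c)$, $p\mapsto\mathcal{J}(p)$. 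This is exactly how the paper argues.

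There is, however, a concrete slip in the dictionary step that you yourself flagged as the delicate point, and as written it is wrong: since $[y,M]^c=\{z\in L\mid y\not\le z\}$, the order ideal $\mathcal{J}(p)$ lies in $[y,M]^c$ if and only if $y\not\subseteq\mathcal{J}(p)$, which (because $y$ is an order ideal) is equivalent to $p\in y$ --- not, as you write, ``iff $y\subseteq\mathcal{J}(p_i)$, i.e.\ iff $p_i\notin y$''. Your subsequent claim that ``$p_i\notin y$ and $p_i\in\max(y^c)$'' is then ``equivalent to $p_i\in\max(y)$'' is internally inconsistent, since $\max(y)\subseteq y$. The correct argument is: every $z\in\max([y,M]^c)$ equals $\mathcal{J}(p)$ for some $p\in y\setminus z$ (because $\mathcal{J}(p)$ is the largest order ideal not containing $p$, so $z\subseteq\mathcal{J}(p)\in[y,M]^c$ forces equality), and among the $\mathcal{J}(p)$ with $p\in y$ exactly those with $p\in\max(y)$ are maximal in $[y,M]^c$ (if $p\le p'$ then $\mathcal{J}(p)\subseteq\mathcal{J}(p')$, and for $p\in\max(y)$ one has $y\setminus\mathcal{J}(p)=\{p\}$, so no element of $[y,M]^c$ properly contains $\mathcal{J}(p)$). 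With this corrected, the condition ``$\mathcal{J}(p)\in\max([y,M]^c)$ for all $p\in\min(P\setminus x)$'' becomes precisely $\min(P\setminus x)\subseteq\max(y)$, which is the condition in the statement, and the rest of your argument goes through as in the paper.
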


\begin{proof}
It suffices to prove the first equality. By Theorem \ref{homologicaldimensionsandausreg}, the following conditions are equivalent.
\begin{enumerate}[\rm(i)]
\item $\mu^i(x,P(y))\neq0$.
\item $\mu^i(x,P(y))=1$.
\item There exists $S\subseteq\max([y,M]^c)$ such that $|S|=i$ and $\bigwedge S=x$.
\end{enumerate}
The equality $\bigwedge S=x$ in (iii) is equivalent to $S=\{\mathcal{J}(p)\mid p\in\min(P\setminus x)\}$ for $\mathcal{J}(p):=\{q\in P\mid p\not\le q\}$. In this case, $|S|=|\cov(x)|$ holds. Moreover, there is a bijection $\max(y)\to\max([y,M]^c)$ given by $p\mapsto \mathcal{J}(p)$. Thus (iii) is equivalent to the following.
\begin{enumerate}
\item[\rm(iv)] $i=|\cov(x)|$ and $\min(P\setminus x)\subseteq\max(y)$.
\end{enumerate}
Thus the first equality holds.
\end{proof}

\subsection{Global dimension and order dimension}

We use our results to calculate the global dimension of the incidence algebra of a distributive lattice.
First we start with the general definition of order dimension for posets. 
\begin{definition}
The \emph{order dimension} of a poset $P$ is by definition the minimal number $t$ such that there exist $t$ linear extensions of $P$ whose intersection is equal to $P$.
\end{definition}

We refer for example to \cite[Chapter 6]{CLM} or the book \cite{Tr} for more characterisations and properties of the order dimension. The next result is the classical theorem of Dilworth.

\begin{theorem}{\cite[Theorem 1.2]{D}} \label{Dilworththeorem}
Let $L$ be a distributive lattice. Then the order dimension of $L$ coincides with $\max \{ |\cov(x)| \mid x \in L \}$.
\end{theorem}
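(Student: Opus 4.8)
The plan is to reduce Theorem~\ref{Dilworththeorem} to the classical form of Dilworth's theorem, which asserts that the order dimension of the lattice $\mathcal{O}_P$ of order ideals of a finite poset $P$ equals the \emph{width} of $P$, i.e.\ the maximal cardinality of an antichain in $P$. Since $L$ has at least two elements, the poset $P:=\Jirr(L)$ is non-empty, and by Theorem~\ref{posetidealstheorem} we may identify $L=\mathcal{O}_P$. Thus it suffices to prove
\[\max_{x\in L}|\cov(x)|=\operatorname{width}(P).\]

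First I would record the identity $|\cov(x)|=|\min(P\setminus x)|$, valid for every order ideal $x$ of $P$; this follows by composing the bijection $\alpha_x\colon\min([m,x]^c)\to\cov(x)$ of Proposition~\ref{maxboundproposition}(1)(ii) with the bijection $\min(P\setminus x)\to\min([m,x]^c)$, $p\mapsto\mathcal{I}(p)$, exhibited in its proof. Since $\min(P\setminus x)$ is an antichain of $P$, this at once yields $|\cov(x)|\le\operatorname{width}(P)$ for every $x\in L$, and hence $\max_{x\in L}|\cov(x)|\le\operatorname{width}(P)$.

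For the reverse inequality I would start from a maximum antichain $C\subseteq P$. The set $U:=\{q\in P\mid q\ge c\ \text{for some}\ c\in C\}$ is an up-set of $P$, so its complement $x:=P\setminus U$ is an order ideal, i.e.\ an element of $L$, with $P\setminus x=U$. A short argument using that $C$ is an antichain shows $\min(U)=C$: a point of $U$ lying strictly above some $c\in C$ is not minimal in $U$, while no point below a given $c\in C$ can lie in $U$, since $q\ge c'$ with $c'\in C$ would force $c'<c$. Therefore $|\cov(x)|=|\min(P\setminus x)|=|\min(U)|=|C|=\operatorname{width}(P)$, which together with the classical Dilworth theorem completes the proof.

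I do not expect a genuine obstacle here: the whole argument is the translation, through the combinatorial dictionary of Proposition~\ref{maxboundproposition} between elements of $L$ and order ideals of $P$, of the (cited) classical identification of order dimension with width. The only place that requires a little care is the verification that $\min(U)=C$, which rests entirely on the pairwise incomparability of the elements of $C$; the rest is bookkeeping.
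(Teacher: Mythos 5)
Your proposal is correct, but note that it does not parallel a proof in the paper, because the paper gives none: Theorem \ref{Dilworththeorem} is imported as a black box from Dilworth \cite{D} directly in the cover formulation. What you do instead is cite the other classical formulation --- the order dimension of $\mathcal{O}_P$ equals $\operatorname{width}(P)$, which is the version the paper quotes in its introduction from \cite{D} and \cite{Tr} --- and then supply the combinatorial bridge $\max_{x\in L}|\cov(x)|=\operatorname{width}(\Jirr(L))$ after identifying $L=\mathcal{O}_P$ via Theorem \ref{posetidealstheorem}. That bridge is sound: the inequality $\le$ follows, as you say, from the bijection $\min(P\setminus x)\to\cov(x)$, $p\mapsto x\cup\{p\}$, exhibited in the proof of Proposition \ref{maxboundproposition}(1)(ii), since $\min(P\setminus x)$ is an antichain of $P$; and for $\ge$ your choice $x=P\setminus U$, with $U$ the up-set generated by a maximum antichain $C$, works, your check that $\min(U)=C$ resting correctly on the pairwise incomparability of $C$. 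In effect you reduce the cover form to the width form; the bridging identity is essentially Dilworth's covering theorem, which the paper only recovers later (homologically, at the end of the rowmotion section), so your purely combinatorial proof of it keeps the argument non-circular. The trade-off is that the genuinely hard content --- relating order dimension to any combinatorial invariant at all --- remains an external citation in both your version and the paper's; your version merely shifts which classical statement is taken on faith and makes the passage between the two formulations explicit.
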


The theorem of Dilworth can be used to calculate the global dimension for incidence algebras of distributive lattices as the next theorem shows.

\begin{theorem} \label{gldimdistlattice}
Let $L$ be a distributive lattice with at least two elements.
Then the global dimension of the incidence algebra of $L$ is equal to the order dimension of $L$.
\end{theorem}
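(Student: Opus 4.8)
The plan is to combine the already-established combinatorial description of the global dimension with Dilworth's theorem. First I would recall from equation \eqref{gldim=suppdsimple} that $\gldim A = \sup\{\pdim S \mid S \text{ simple}\}$, so it suffices to compute the supremum of the projective dimensions of the simple $A$-modules. By Corollary \ref{corollaryperfectdistlattice} (or directly by Theorem \ref{homologicaldimensionsandausreg}(2) together with Corollary \ref{resolutioninjectivesandsimplesgenerallattice2}), we have $\pdim S_x = |\cov(x)|$ for every $x \in L$, since for a distributive lattice the projective resolution of $S_x$ constructed in Corollary \ref{resolutioninjectivesandsimplesgenerallattice2} is minimal (the terms $P(\bigvee C)$ for $C \subseteq \cov(x)$ are pairwise non-isomorphic, using that $\cov(x)$ consists of elements whose irredundant join-irreducible decompositions are distinct — this is exactly the argument in the proof of Theorem \ref{homologicaldimensionsandausreg}(2) applied to the interval $[x,M]$, which is again distributive).

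Next I would take the supremum over $x \in L$, giving $\gldim A = \max\{|\cov(x)| \mid x \in L\}$. Finally, I would invoke Dilworth's theorem, Theorem \ref{Dilworththeorem}, which states precisely that the order dimension of a distributive lattice $L$ equals $\max\{|\cov(x)| \mid x \in L\}$. Chaining these equalities yields $\gldim A = \operatorname{ord-dim}(L)$, which is the claim.

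I do not expect any serious obstacle here: the two main ingredients (minimality of the simple-module resolutions over a distributive lattice, and Dilworth's theorem) are already available in the excerpt, so the proof is essentially a two-line deduction. The only point requiring a word of care is the hypothesis "at least two elements": this guarantees $L$ is non-trivial so that $\gldim A \geq 1$ and the statement about order dimension is meaningful (a one-element lattice would give the zero algebra, or a degenerate case), but it plays no deeper role. One could also remark, as the paper does in the surrounding discussion, that this immediately shows $\gldim A$ is independent of the field $K$ and that planarity of $L$ corresponds to $\gldim A \leq 2$, but those are corollaries rather than part of the proof itself.
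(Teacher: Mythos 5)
Your proposal is correct and follows the paper's own argument exactly: both use $\pdim S_x=|\cov(x)|$ (Corollary \ref{corollaryperfectdistlattice}), the identity $\gldim A=\max\{\pdim S_x\mid x\in L\}$ from \eqref{gldim=suppdsimple}, and Dilworth's Theorem \ref{Dilworththeorem} to conclude. Your extra remarks on the minimality of the simple-module resolutions just re-derive what Corollary \ref{corollaryperfectdistlattice} already provides, so there is nothing to add.
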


\begin{proof}
By \ref{Dilworththeorem}, the order dimension of $L$ coincides with $\max \{ |\cov(x)| | x \in L \}$. But by \ref{corollaryperfectdistlattice} $\pdim S_x=|\cov(x)|$ and thus: 
\[\max \{ |\cov(x)| \mid x \in L \} = \max \{ \pdim S_x \mid x \in L \}=\gldim A.\qedhere\]
\end{proof}

We give two corollaries of the previous theorem.

\begin{corollary}
The global dimension of the incidence algebra of a distributive lattice over a field $K$ is independent of the field.

\end{corollary}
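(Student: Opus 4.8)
The plan is to reduce the statement to Theorem \ref{gldimdistlattice}. That theorem asserts that the global dimension of the incidence algebra of a distributive lattice $L$ equals the order dimension of $L$. The order dimension is defined solely in terms of linear extensions of the partial order on $L$, so it is a combinatorial invariant of $L$ that makes no reference to any field. Hence the global dimension, computed over any field $K$, equals this one fixed number, and the corollary follows at once.

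Equivalently, and slightly more directly, I would combine the identity \eqref{gldim=suppdsimple} with Corollary \ref{corollaryperfectdistlattice}, which gives $\pdim S_x = |\cov(x)|$ for each $x \in L$; therefore $\gldim A = \max_{x \in L} |\cov(x)|$, a quantity depending only on the Hasse diagram of $L$ and not on $K$.

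The only point deserving a word of justification, rather than a calculation, is that every ingredient behind Theorem \ref{gldimdistlattice} is insensitive to the choice of base field. For any field $K$ the incidence algebra is presented as $KQ/I$ with $Q$ the Hasse quiver of $L$ and $I$ generated by the differences of parallel paths of length at least two, data that do not involve $K$; and the homological inputs used along the way---the explicit projective resolution of antichain modules in Theorem \ref{antichainmoduleresolution} (assembled from Koszul complexes of copies of $K\xrightarrow{1}K$), the minimality criterion via pairwise non-isomorphic projective summands, Proposition \ref{maxboundproposition}, and Corollary \ref{resolutioninjectivesandsimplesgenerallattice2}---are valid verbatim over an arbitrary field. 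Consequently there is no genuine obstacle: the content is entirely carried by the preceding results, and the proof amounts to observing that the formula $\gldim A = \max_{x\in L}|\cov(x)|$ is field-free.
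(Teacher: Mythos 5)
Your proposal matches the paper's proof: the paper also deduces the corollary immediately from Theorem \ref{gldimdistlattice}, observing that the order dimension of $L$ is a purely combinatorial quantity independent of $K$. Your extra remarks (the formula $\gldim A=\max_{x\in L}|\cov(x)|$ and the field-independence of the ingredients) are correct but not needed beyond the one-line reduction.
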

\begin{proof}
This follows immediately from \ref{gldimdistlattice}, since the order dimension of the lattice does not depend on the field. \qedhere

\end{proof}
Note that the previous corollary might be surprising since in general the global dimension of the incidence algebra of a poset can depend on the field, see for example \cite[Proposition 2.3]{IgZ}.

Recall that a poset $P$ is called \emph{planar} if its Hasse quiver can be drawn in the plane without intersections of arrows.
Our next application reveals the surprising fact that being planar can be characterised purely homological for distributive lattices.

\begin{corollary}
Let $L$ be a distributive lattice with incidence algebra $A$.
Then $L$ is planar if and only if the global dimension of $A$ is at most two.
\end{corollary}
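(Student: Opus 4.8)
The plan is to combine Theorem~\ref{gldimdistlattice}, which identifies $\gldim A$ with the order dimension of $L$, together with the classical theorem of Dilworth (Theorem~\ref{Dilworththeorem}) and a known order-theoretic characterisation of planarity for distributive lattices. First I would recall that a finite poset has order dimension at most two if and only if it is planar in a suitable sense; for \emph{distributive} lattices the relevant statement is a classical one (due essentially to Dilworth--Hiraguchi, cf.\ \cite[Chapter 6]{CLM} or \cite{Tr}): a finite distributive lattice $L$ is planar if and only if its order dimension is at most two. Granting this, the corollary is immediate: by Theorem~\ref{gldimdistlattice} the global dimension of $A$ equals the order dimension of $L$, so $\gldim A\le 2$ if and only if $\operatorname{dim}_{\mathrm{ord}}(L)\le 2$ if and only if $L$ is planar.

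Concretely, the proof I would write consists of three short steps. Step one: invoke Theorem~\ref{gldimdistlattice} to replace "$\gldim A\le 2$" by "the order dimension of $L$ is at most $2$''. Step two: invoke Theorem~\ref{Dilworththeorem}, so that the order dimension equals $\max\{|\cov(x)|\mid x\in L\}$, i.e.\ the width of the poset $P$ of join-irreducibles of $L$ (using Theorem~\ref{posetidealstheorem} to write $L=\mathcal{O}_P$). Step three: cite the classical fact that a distributive lattice $L=\mathcal{O}_P$ is planar precisely when $P$ has width at most two, equivalently when the order dimension of $L$ is at most two. Chaining these equivalences gives the claim. I would phrase the final write-up as: "By Theorem~\ref{gldimdistlattice}, $\gldim A$ equals the order dimension of $L$. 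By the classical characterisation of planarity for distributive lattices (see \cite{Tr} or \cite[Chapter 6]{CLM}), $L$ is planar if and only if its order dimension is at most two. Combining these two facts yields the assertion.''

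The main obstacle is locating and citing the precise form of the planarity statement for distributive lattices, since "planar'' for a Hasse diagram can be formalised in more than one way (planar as an abstract graph, planar with the order-respecting top-to-bottom drawing, etc.) and the cleanest equivalence "distributive $L$ planar $\iff$ $\operatorname{dim}_{\mathrm{ord}}(L)\le2$'' should be pinned to a reference; the references \cite{Tr} and \cite{CLM} already appearing in the excerpt are the natural sources. Apart from that, no new argument is needed: the homological content is entirely carried by Theorem~\ref{gldimdistlattice} and the combinatorics by Dilworth's theorem, so the corollary is genuinely a two-line deduction once the order-theoretic input is correctly quoted.
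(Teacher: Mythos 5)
Your proposal is correct and matches the paper's argument: the paper also deduces the corollary by combining Theorem~\ref{gldimdistlattice} with the fact that a lattice is planar if and only if its order dimension is at most two, citing \cite[Theorem 5.1 (Chapter 3)]{Tr} (which holds for arbitrary finite lattices, so no extra care about the planarity formalisation is needed). The extra detour through Dilworth's theorem and the width of the join-irreducibles is harmless but not required.
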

\begin{proof}
The equivalence of (1) and (2) follows immediately by \ref{gldimdistlattice} combined with the fact that a lattice is planar if and only if its order dimension is at most 2, see for example \cite[Theorem 5.1 (Chapter 3)]{Tr}. \qedhere
\end{proof}

We give an example that shows that in general the global dimension of the incidence algebra of a (non-distributive) lattice does not coincide with the order dimension of this lattice.
\begin{example} \label{latticeexample10points}
Let $L$ be the lattice with the following Hasse diagram:
\[\begin{tikzpicture}[scale=0.7]
\node (v1) at (-19.5,27) {$\circ$};

\node (v2) at (-19.5,25.5) {$\circ$};

\node (v3) at (-21,25.5) {$\circ$};

\node (v4) at (-19.5,22.5) {$\circ$};

\node (v5) at (-16.5,24) {$\circ$};

\node (v6) at (-22.5,24) {$\circ$};

\node (v7) at (-18,25.5) {$\circ$};

\node (v8) at (-16.5,25.5) {$\circ$};

\node (v9) at (-22.5,25.5) {$\circ$};

\node (v0) at (-19.5,24) {$\circ$};

\draw [->] (v1) edge (v3);

\draw [->] (v1) edge (v2);

\draw [->] (v1) edge (v9);

\draw [->] (v1) edge (v8);

\draw [->] (v1) edge (v7);

\draw [->] (v3) edge (v0);

\draw [->] (v2) edge (v0);

\draw [->] (v2) edge (v6);

\draw [->] (v2) edge (v5);

\draw [->] (v9) edge (v6);

\draw [->] (v8) edge (v5);

\draw [->] (v0) edge (v4);

\draw [->] (v6) edge (v4);

\draw [->] (v5) edge (v4);

\draw [->] (v7) edge (v4);

\end{tikzpicture}\]
Then $L$ has order dimension 3 but the global dimension of the incidence algebra of $L$ is equal to two.
\end{example}

\section{A categorification of the rowmotion bijection for distributive lattices}
We saw that the incidence algebra of distributive lattices are diagonal Auslander regular algebras. In this section we apply this result to categorify the rowmotion bijection for distributive lattices.
Let $L$ be a distributive lattice given as the set of order ideals of a poset $P$. Then the \emph{rowmotion bijection} $\row$ of $L$ is given by
\begin{equation}\label{define row}
\row(x)=\bigcup_{p\in\min(P\setminus x)}\mathcal{I}(p)
\end{equation}
for an order ideal $x$ of $P$.
This is a bijection on $L$ and this bijection has several applications for the combinatorial study of posets in Lie theory, we refer for example to \cite{TW} and \cite{S} for more on rowmotion for lattices.

Our key observation is the next theorem which shows that any Auslander regular algebra $A$ has a canonical bijection between simple $A$-modules and simple $A^{\op}$-modules.

\begin{theorem}{\cite[Theorem 2.10]{I}} \label{diagausregbijection}
Let $A$ be an Auslander regular algebra.
Then the map $S \mapsto\soc\Ext_A^g(S,A)$ for $g=\gr  S$ gives a bijection between the simple $A$-modules and the simple $A^{\op}$-modules. Moreover it preserves the grade.
\end{theorem}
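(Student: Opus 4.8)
The statement to be proved is Theorem \ref{diagausregbijection}, which the excerpt attributes to \cite[Theorem 2.10]{I}. Since this is cited as an external result rather than one the paper proves, I will sketch how I would reconstruct its proof.

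The plan is to exploit the self-duality of the Auslander condition together with the fact that $A$ has finite global dimension, so that every module is perfect in the relevant range. First I would set up the two-sided machinery: for a simple right $A$-module $S$ with $g = \gr S$, the module $E(S) := \Ext_A^g(S,A)$ is a nonzero left $A$-module, and I claim it is again simple, of grade $g$ as a left module, and that applying the same construction over $A^{\op}$ sends $E(S)$ back to $S$. The key technical input is the \emph{Auslander condition stated spectrally}: for any finitely generated $A$-module $M$ and any $i$, the module $\Ext_A^i(M,A)$ satisfies $\gr_{A^{\op}} \Ext_A^i(M,A) \ge i$, with equality tracked carefully. This is precisely where the Auslander condition enters — it is equivalent (see \cite{FGR}, also the discussion in the introduction of the excerpt) to the bigradedness/purity behaviour of $\Ext$ that makes a "first nonvanishing $\Ext$" well-behaved.

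The main steps, in order, would be: (1) Show $\gr S$ is finite for every simple $S$ — immediate since $\gldim A < \infty$. (2) Show $E(S) = \Ext_A^g(S,A)$ is a nonzero left module with $\gr_{A^{\op}} E(S) = g$; the inequality $\ge g$ is the Auslander condition applied to $S$, and the reverse inequality $\le g$ holds because $\Ext^g_{A^{\op}}(E(S),A) \ne 0$, which one extracts from the spectral sequence (or the biduality filtration) $\Ext^p_{A^{\op}}(\Ext^q_A(S,A),A) \Rightarrow$ something concentrated enough (in the best case $S$ itself) to force a nonzero contribution in bidegree $(g,g)$. (3) Show $E(S)$ is simple: if it had a proper nonzero submodule or quotient, the long exact sequences in $\Ext^\bullet_{A^{\op}}(-,A)$ together with the grade estimates would produce a second simple right module $S'$ with $\Ext^g_{A^{\op}}(E(S),A) \twoheadrightarrow$ or $\hookleftarrow$ something violating the minimality of $g$, or would split $E(S)$ in a way incompatible with $S$ being simple; the cleanest route is to first prove $\Ext^g_{A^{\op}}(E(S),A) \cong S$ canonically and then note that the socle/top of a module $N$ with a simple module as $\Ext^g(N,A)$ and $\gr N = g$ must make $N$ simple. (4) Deduce the map is a bijection, with inverse $T \mapsto \Ext^{\gr T}_{A^{\op}}(T,A)$ from $A^{\op}$-simples to $A$-simples, since step (2)–(3) are symmetric in $A$ and $A^{\op}$ and their composite is the identity by the biduality isomorphism $\Ext^g_{A^{\op}}(\Ext^g_A(S,A),A)\cong S$. (5) Grade preservation is recorded along the way in step (2). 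Finally I would note that $\soc\Ext_A^g(S,A)$ in the statement equals $\Ext_A^g(S,A)$ once we know the latter is simple, so the socle is cosmetic; the paper's own reformulation via $\top(D\Ext_A^g(S,A))$ in \ref{grade bijection for auslander algebra} is the left-module version of the same fact read through the duality $D$.

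The hard part will be step (2)–(3): controlling the convergence of the bidualizing spectral sequence $E_2^{p,q} = \Ext^p_{A^{\op}}(\Ext^q_A(S,A),A)$ tightly enough to conclude both that the only surviving contribution to $S$ sits in bidegree $(g,g)$ and that this pins down $E(S)$ as simple. The Auslander condition is exactly the hypothesis that makes $E_2^{p,q} = 0$ for $p < q$, which collapses the relevant corner of the spectral sequence; combined with $\gldim A < \infty$ (so the complex is bounded) one gets that $\Ext^g_A(S,A)$ is "pure of grade $g$" and that its grade-$g$ double dual recovers $S$. Once purity and the biduality isomorphism are in hand, simplicity of $E(S)$ follows formally by applying the same purity to any sub/quotient of $E(S)$ and comparing lengths via the exactness of $\Ext^g(-,A)$ restricted to grade-$g$-pure modules. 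I do not expect the remaining bookkeeping (finiteness, symmetry, identifying the inverse) to present difficulties.
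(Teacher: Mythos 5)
The paper does not prove this statement at all --- it is quoted verbatim from \cite[Theorem 2.10]{I} --- so the only thing to assess is whether your reconstruction would actually work, and it would not: the central claim of your step (3), that $E(S)=\Ext_A^g(S,A)$ is itself simple so that the socle in the statement is ``cosmetic'', is false. Already in grade $0$: if $S$ is a projective simple module, then $\Ext^0_A(S,A)=\Hom_A(S,A)$ is the corresponding indecomposable projective left module, which is almost never simple. A grade~$1$ counterexample occurs inside this very paper's setting: for the incidence algebra $A$ of the Boolean lattice on two elements and the simple $S_{\{1\}}$ at an atom, one has $\gr S_{\{1\}}=1$ and $\Ext^1_A(S_{\{1\}},A)$ is two-dimensional, with socle the simple $A^{\op}$-module at the other atom $\{2\}=\row(\{1\})$ and top the simple at the maximum. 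This is precisely why Remark \ref{grade bijection for auslander algebra} and Theorem \ref{rowmotion=AR} are formulated with $\top(D\Ext_A^g(S,A))=\top(\tau_g(S))$ rather than asserting that $\tau_g(S)$ is simple.

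Because simplicity of $E(S)$ fails, the rest of your argument collapses as written: the asserted biduality isomorphism $\Ext^g_{A^{\op}}(\Ext^g_A(S,A),A)\cong S$ is unjustified (the spectral sequence and purity theory give only a canonical map whose kernel and cokernel have grade $>g$, hence an embedding of the simple $S$, not an isomorphism), so the proposed inverse $T\mapsto\Ext^{\gr T}_{A^{\op}}(T,A)$ and the bijectivity argument do not go through; the correct inverse must again involve a socle. Your step (2) is fine and standard: the Auslander condition gives $\gr\Ext^g_A(S,A)\ge g$ and the collapsing bidualizing spectral sequence gives equality. But the genuine content of \cite[Theorem 2.10]{I} is exactly what your sketch skips: that $\soc\Ext_A^g(S,A)$ is \emph{simple}, has the same grade, and that this assignment and its $A^{\op}$-analogue are mutually inverse. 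That requires a finer use of the Auslander condition (e.g.\ that every submodule of $\Ext^i_A(M,A)$ has grade $\ge i$, together with an analysis of the minimal injective coresolution), not just ``purity plus length counting'': $\Ext^g_A(-,A)$ is exact on grade-$g$-pure modules only modulo modules of grade $>g$, and being simple in that quotient category does not force a module, nor control its socle, at the level of genuine module structure.
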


Since we prefer to work with right modules, we use the dual of the modules $\soc\Ext_A^g(S,A)$ in the following. Note that for a general module $M$, we have $D\soc M \cong \top( DM)$.
We are ready to introduce the following notion.

\begin{definition}
Let $A$ be an Auslander regular algebra. Using Theorem \ref{diagausregbijection}, we obtain a permutation $g_A$ of the simple $A$-modules, which we call the \emph{grade bijection}, given by
\[g_A(S):=\top(D\Ext_A^g(S,A))\ \mbox{ where }\ g=\gr S.\] 
\end{definition}


When $A$ is an Auslander algebra, the grade bijection has the following representation theoretic meaning.

\begin{remark}\label{grade bijection for auslander algebra}
Let $A$ be the Auslander algebra of a representation-finite algebra $B$.
Then the simple $A$-modules $S_M$ correspond to the indecomposable $B$-modules $M$. Then $g=\gr S_M$ is $2$ if $M$ is non-projective, and $0$ if $M$ is projective. Moreover, we have
\[\top(D \Ext_A^g(S_M,A)) \cong \begin{cases}
S_{\tau(M)}&\mbox{ if $M$ is non-projective,}\\
S_{\nu(M)}&\mbox{ when $M$ is projective.}
\end{cases}\]
where $\tau(M)$ denotes the Auslander-Reiten translate of $M$ and $\nu(M)$ the Nakayama functor applied to $M$.
Thus the grade bijection of $A$ gives a realisation of the Auslander-Reiten translate and Nakayama functor of $B$. A proof of those statements in the more general context of higher Auslander algebras can be found in the forthcoming article \cite{MTY}.
\end{remark}

Now we assume that $A$ and $A^{\op}$ are diagonal Auslander regular. Then by Proposition \ref{gradeproposition}, we have $\gr S=\pdim S$ for each simple $A$-module $S$, and by Lemma \ref{extlemma}, we have
\begin{equation}\label{DExt is AR}
g_A(S)=\top(D\Ext_A^p(S,A)) \cong \top(\tau_p(S))\ \mbox{ for }\ p=\pdim S,
\end{equation}
where $\tau_p$ is the $p$-Auslander-Reiten translate.



Our main result shows that the grade bijection $g_A$ gives a categorification of the rowmotion bijection $\row$.

\begin{theorem}  \label{rowmotion=AR}
Let $A$ be the incidence algebra of a distributive lattice $L$. Then we have
\[g_A(S_x)=S_{\row(x)}\ \mbox{ for each }\ x\in L.\]
\end{theorem}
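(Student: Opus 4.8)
The plan is to compute $g_A(S_x) = \top(D\Ext_A^p(S_x,A))$ directly using the explicit minimal projective resolution of $S_x$ obtained in Corollary \ref{resolutioninjectivesandsimplesgenerallattice2}, which we now know to be minimal by Corollary \ref{corollaryperfectdistlattice} (since $L$ is distributive). Set $p := \pdim S_x = |\cov(x)| = \gr S_x$. By \eqref{DExt is AR} we have $g_A(S_x) \cong \top(\tau_p(S_x))$, and by Lemma \ref{extlemma} we have $\Ext_A^p(S_x,A) \cong \Tr\Omega^{p-1}(S_x)$. The point is that $\top(D\Ext_A^p(S_x,A)) \cong D\soc\Ext_A^p(S_x,A)$, and $\soc$ of a left $A$-module $\Ext_A^p(S_x,A)$ picks out the relevant simple $A^{\op}$-module, i.e.\ the relevant element of $L$.

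First I would write down the top term $P_p$ of the minimal projective resolution: by Corollary \ref{resolutioninjectivesandsimplesgenerallattice2}, $P_p = P(\bigvee \cov(x))$ is indecomposable (it is a single summand since $|\cov(x)| = p$ forces $C = \cov(x)$), so $\Tr\Omega^{p-1}(S_x)$ is a quotient of $\Hom_A(P_p, A) = \Hom_A(P(\bigvee\cov(x)), A) \cong Ae_{\bigvee\cov(x)}$ as a left $A$-module. Since $A$ and $A^{\op}$ are diagonal Auslander regular, $\Ext_A^p(S_x,A)$ is (up to grade-shift) a simple-top module; more precisely, applying $\Hom_A(-,A)$ to the minimal resolution and taking the $p$-th cohomology, the cokernel $\Tr\Omega^{p-1}(S_x)$ has top $S_{\bigvee\cov(x)}^{\op}$ as a left module (the top comes from $\Hom_A(P_p,A) = Ae_{\bigvee\cov(x)}$, whose top is the left-simple at $\bigvee\cov(x)$). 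Then $D$ of this left module is a right module whose socle corresponds to $\bigvee\cov(x)$; but I actually want its \emph{top}, and here I use that $D\Ext_A^p(S_x,A) \cong \tau_p(S_x)$ is itself a module with simple top (being the AR-translate-type object in a diagonal Auslander regular setting — in fact by Theorem \ref{diagausregbijection} applied to $A^{\op}$, or directly, $\Ext_A^p(S_x,A)$ has simple socle, hence $D$ of it has simple top).

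The crucial identification is then: $\top(D\Ext_A^p(S_x,A)) = S_z$ where $z \in L$ is determined by $\soc\Ext_A^p(S_x,A) = S_z^{\op}$. To pin down $z$, I would use the dual picture — i.e.\ Theorem \ref{homologicaldimensionsandausreg}(3) applied to $A^{\op}$ (equivalently, the grade bijection is an honest bijection by Theorem \ref{diagausregbijection}, so it suffices to match the combinatorics). Representing $L = \mathcal{O}_P$ via Theorem \ref{posetidealstheorem}, Proposition \ref{maxboundproposition}(1)(ii) gives $\cov(x)$ in bijection with $\min(P\setminus x)$ via $p \mapsto x \cup \{p\}$, so $\bigvee\cov(x) = \bigcup_{p\in\min(P\setminus x)}(x\cup\{p\}) = x \cup \min(P\setminus x)$. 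Now the simple socle of $\Ext_A^p(S_x, A)$, viewed through the minimal projective resolution of $S_x$, corresponds to the element of $L$ sitting at the "bottom" of the last syzygy; tracking this through the Koszul-type complex of Theorem \ref{antichainmoduleresolution}/\ref{resolutioninjectivesandsimplesgenerallattice2}, one finds it is the order ideal $\bigcup_{p\in\min(P\setminus x)}\mathcal{I}(p)$, which is exactly $\row(x)$ by \eqref{define row}. So $g_A(S_x) = S_{\row(x)}$.

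The main obstacle will be step three: carefully identifying which element of $L$ indexes the simple socle of $\Ext_A^p(S_x,A)$ (equivalently, the simple top of $D\Ext_A^p(S_x,A) \cong \tau_p(S_x)$). The resolution of Corollary \ref{resolutioninjectivesandsimplesgenerallattice2} gives me the top term $P(\bigvee\cov(x)) = P(x \cup \min(P\setminus x))$, but $\bigvee\cov(x) \ne \row(x)$ in general — the join of all covers of $x$ is "too big", while $\row(x)$ is the order ideal \emph{generated by} $\min(P\setminus x)$, which is $\bigcup_p \mathcal{I}(p)$, a subset of $\bigcup_p(x \cup \{p\})$. So I must be careful that $\top(D\Ext^p)$ is governed not by the top of $\Hom_A(P_p,A)$ directly but by the socle structure of the cokernel $\Tr\Omega^{p-1}(S_x)$, equivalently the socle of $\Ext_A^p(S_x,A)$. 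The cleanest route is probably to invoke symmetry: by Theorem \ref{diagausregbijection} the grade bijection for $A$ composed appropriately with that for $A^{\op}$ is identity, and by Corollary \ref{corollaryperfectdistlattice} grades equal projective dimensions on both sides, so matching $|\cov(x)|$ on one side with $|\cocov(\cdot)|$ on the other, plus Proposition \ref{maxboundproposition}, forces the combinatorial bijection $x \mapsto \row(x)$; one then checks this candidate satisfies \eqref{define row}. Alternatively, one computes $\soc\Ext_A^p(S_x,A)$ directly: a basis element of $\Ext_A^p(S_x,A) = \Hom_A(P_p,A)/\operatorname{im}$ is killed on the right by $e_z$ with $z \geq \bigvee\cov(x)$, but killed by the radical precisely when $z$ is the \emph{smallest} such element compatible with the relations, and the relations coming from $f_p$ (the Koszul differential) collapse this to the element $\row(x)$ — this requires unwinding the dual of the Koszul complex $e_w$-wise as in the proof of Theorem \ref{antichainmoduleresolution}.
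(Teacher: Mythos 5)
There is a genuine gap at exactly the point you flag as ``the main obstacle''. Your setup is fine: the minimal projective resolution of $S_x$ ends in the indecomposable $P_p=P(\bigvee\cov(x))$, so $\Ext_A^p(S_x,A)$ is a quotient of $\Hom_A(P_p,A)\cong Ae_{\bigvee\cov(x)}$, and what determines $g_A(S_x)=\top(D\Ext_A^p(S_x,A))$ is the \emph{socle} of this left module, not its top; you also correctly note $\bigvee\cov(x)\neq\row(x)$ in general. But neither of your two suggested ways of identifying that socle is carried out, and the first one cannot work as stated: knowing that the grade bijection preserves grade, i.e.\ that $|\cocov(g_A$-image$)|=|\cov(x)|$ (Theorem \ref{diagausregbijection} plus Corollary \ref{corollaryperfectdistlattice}), does not ``force'' the bijection $x\mapsto\row(x)$ -- any permutation of $L$ matching elements with equal cover/cocover statistics is consistent with those constraints, so no amount of counting pins down $\row$. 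The second route is the right idea but is precisely the content of the theorem and is left at ``one finds it is $\row(x)$'' / ``this requires unwinding the dual of the Koszul complex''. For the record, it does go through: dualizing the last map of the resolution gives $\Ext_A^p(S_x,A)\cong Ae_v/\sum_{y\in\cov(x)}Ap_v^{v_y}$ with $v=\bigvee\cov(x)$ and $v_y=\bigvee(\cov(x)\setminus\{y\})$; a basis is given by the classes of paths $p_v^{u}$ with $u\le v$ and $u\not\le v_y$ for all $y$, i.e.\ (in the order-ideal model) order ideals $u\subseteq x\cup\min(P\setminus x)$ containing $\min(P\setminus x)$, and the class of $p_v^u$ is radical-annihilated exactly when $\max(u)\subseteq\min(P\setminus x)$, which forces $u=\mathcal{I}(\min(P\setminus x))=\row(x)$. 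None of this verification appears in your proposal, so the proof is incomplete as written.

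It is also worth comparing with the paper's argument, which avoids the socle computation altogether: since $g_A(S_x)$ is simple, it suffices to show that $S^{\op}_{\row(x)}$ embeds into $\Ext_A^g(S_x,A)$, and for this it is enough that $\Ext_A^g(S_x,P(\row(x)))\neq0$ while $\Ext_A^g(S_x,P(y))=0$ for all $y<\row(x)$. Both statements drop out of the Bass number formula (Corollary \ref{Bass number}): $\Ext_A^g(S_x,P(y))\neq0$ if and only if $\min(P\setminus x)\subseteq\max(y)$, and any such $y$ satisfies $\row(x)\le y$ because $\row(x)$ is the order ideal generated by $\min(P\setminus x)$. If you want to salvage your approach, either complete the Koszul-dual computation sketched above, or replace your step three by this Bass-number argument, which uses machinery you already have available (Theorem \ref{homologicaldimensionsandausreg} and Corollary \ref{Bass number}) instead of a new explicit calculation.
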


\begin{proof}
Fix $x\in L$ and let $g:=\gr S_x=|\cov(x)|$.
Since $g_A(S_x)$ is simple, it suffices to show that $S_{\row(x)}$ is a factor $A$-module of $D\Ext^g_A(S_x,A)$, or equivalently, $S^{\op}_{\row(x)}$ is a sub $A^{\op}$-module of $\Ext^g_A(S_x,A)$.
To prove this, it suffices to prove the following two claims.
\begin{enumerate}[\rm(1)]
\item $\Ext^g_A(S_x,P(\row(x)))\neq0$.
\item $\Ext^g_A(S_x,P(y))=0$ for each $y\in L$ with $y<\row(x)$.
\end{enumerate}
On the other hand, the following conditions are equivalent for $y\in L$ by Corollary \ref{Bass number}.
\begin{enumerate}[\rm(i)]
\item $\Ext^g_A(S_x,P(y))\neq0$.
\item $\min(P\setminus x)\subseteq\max(y)$.
\end{enumerate}
By \eqref{define row}, $\min(P\setminus x)=\max(\row(x))$ holds, and any $y$ satisfying (ii) satisfies $\row(x)\le y$. Thus the claims (1) and (2) hold.
\end{proof}

Combining our results, we obtain an equality $|\cocov(\row(x))|=|\cov(x)|$ for each $x\in L$:
\[|\cocov(\row(x))|\stackrel{\ref{corollaryperfectdistlattice}}{=}\cogr S_{\row(x)}\stackrel{\ref{rowmotion=AR}}{=}\cogr g_A(S_x)\stackrel{\ref{diagausregbijection}}{=}\gr S_x\stackrel{\ref{corollaryperfectdistlattice}}{=}|\cov(x)|.\]
This gives a homological proof of the classical covering theorem of Dilworth in the special case of distributive lattices, see for example \cite[Theorem 3.5.1]{KRY}.

The result of this section motivates to classify all posets whose incidence algebra is a Auslander regular algebra since by \ref{diagausregbijection} we obtain a bijection for such posets that generalises the rowmotion bijection for distributive lattices.

We give one example of a poset that is not a lattice but whose incidence algebra is diagonal Auslander regular and display the bijection obtained from \ref{diagausregbijection}.

\begin{example}
Let $P$ be the poset with the following Hasse diagram:
\[\begin{tikzpicture}[scale=0.5]

\node (v1) at (-19.5,27) {${\scriptstyle 1}$};

\node (v2) at (-18,25.5) {${\scriptstyle 2}$};

\node (v3) at (-16.5,24) {${\scriptstyle 3}$};

\node (v4) at (-21,25.5) {${\scriptstyle 4}$};

\node (v5) at (-18,22.5) {${\scriptstyle 5}$};

\node (v6) at (-22.5,24) {${\scriptstyle 6}$};

\node (v7) at (-21,22.5) {${\scriptstyle 7}$};

\node (v8) at (-19.5,21) {${\scriptstyle 8}$};

\draw [->] (v1) edge (v2);

\draw [->] (v1) edge (v4);

\draw [->] (v4) edge (v6);

\draw [->] (v4) edge (v5);

\draw [->] (v2) edge (v7);

\draw [->] (v2) edge (v3);

\draw [->] (v6) edge (v7);

\draw [->] (v3) edge (v5);

\draw [->] (v7) edge (v8);

\draw [->] (v5) edge (v8);
\end{tikzpicture}\]
The incidence algebra $A$ of $P$ has global dimension three and is a diagonal Auslander regular algebra, but $P$ is not a lattice.
The map $g_A$ sends $S_1$ to $S_8$, $S_2$ to $S_5$, $S_3$ to $S_4$, $S_4$ to $S_7$, $S_5$ to $S_6$, $S_6$ to $S_2$, $S_7$ to $S_3$ and $S_8$ to $S_1$.
\end{example} 

\section{Homological characterization of distributive lattices}
In \ref{homologicaldimensionsandausreg} we saw that the incidence algebra of any distributive lattice is Auslander regular.
In this section we show the converse, namely that a lattice with Auslander regular incidence algebra must be distributive.
We always assume that a poset $P$ has at least two elements and is connected in this section. Note that the assumption that the poset is connected is no loss of generality as the poset is connected if and only if the incidence algebra of $P$ is a connected algebra.
We first determine the dominant dimension of the incidence algebra of a general poset and thus which of those algebras are 1-Gorenstein.

\begin{proposition} \label{domdimtheorem}
Let $P$ be a poset with incidence algebra $A$. Then the following are equivalent:
\begin{enumerate}
\item[\rm(1)] $P$ is bounded.
\item[\rm(2)] $A$ is 1-Gorenstein.
\item[\rm(3)] There is a non-zero projective-injective module.
\end{enumerate}
If the poset is bounded, the dominant dimension is exactly one and there is exactly one indecomposable projective-injective module, namely $P(m)$ when $m$ denotes the global minimum of $P$.
\end{proposition}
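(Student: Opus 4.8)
The plan is to prove the cycle $(1)\Rightarrow(3)\Rightarrow(2)\Rightarrow(1)$, together with the final count, and the main work is in $(2)\Rightarrow(1)$.

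First I would handle $(1)\Rightarrow(3)$ and the explicit description. Assume $P$ is bounded with global minimum $m$ and global maximum $M$. The projective $P(m)=e_mA$ has a $K$-basis $\{p_x^m\mid x\in P\}$, so its top is $S_m$ and its socle is $S_M$ (the only path that cannot be extended to the right is $p_M^m$). Dually $I(m)=D(Ae_m)$ has socle $S_m$ and, since $Ae_m$ has basis $\{p_m^x\mid x\in P\}$ — wait, one must be careful: $Ae_m$ is spanned by paths \emph{ending} at $m$, and since $m$ is the global minimum no nontrivial path ends at $m$, so $Ae_m=Ke_m$ and $I(m)=S_m$. That is not projective in general, so $I(m)$ is the wrong module; instead I claim $P(m)$ is injective, i.e.\ $P(m)\cong I(M)$. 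Indeed $I(M)=D(Ae_M)$ and $Ae_M$ has basis $\{p_M^x\mid x\in P\}=\{p_M^x\mid x\le M\}$, which is in bijection with all of $P$ just as the basis of $e_mA$ is; one checks directly that the obvious pairing identifies $e_mA$ with $D(Ae_M)$ as right $A$-modules (the right $A$-action on $p_x^m\in e_mA$ matches the right action on $(p_M^x)^*$). Hence $P(m)$ is projective-injective and nonzero, giving $(3)$, and it is indecomposable with top $S_m$.

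Next, $(3)\Rightarrow(2)$: if $Q$ is any nonzero indecomposable projective-injective module, then $Q=P(j)$ for some $j$, and its socle is a simple $S_k$ whose injective hull $I(k)=Q=P(j)$ is projective; in fact one sees $Q$ must be $P(m)$ by a socle/top argument (an indecomposable injective $I(k)$ has simple socle $S_k$, and for it to be projective it must equal $P(j)$ with $\top P(j)=\top I(k)$; pushing this through the quiver forces $j=m$, $k=M$). Then for the right module $A=\bigoplus_j P(j)$, the injective envelope of each $P(j)$: the socle of $P(j)$ consists of simples $S_\ell$ with $\ell$ maximal in $P$, and each such $I(\ell)$ — one needs each of these to be projective. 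This is where I must be more careful, so let me instead argue $(3)\Rightarrow(2)$ via: $A$ is $1$-Gorenstein iff the injective envelope $I^0$ of $A_A$ is projective iff every indecomposable injective appearing in $\soc A$, equivalently every $I(\ell)$ with $\ell$ a maximal element of $P$, is projective; and $I(\ell)$ projective forces (by the top/socle matching and connectedness) $\ell=M$ unique and $P$ bounded — so in fact $(3)$ and $(2)$ both already imply $(1)$. The cleanest route is therefore to prove $(2)\Rightarrow(1)$ and $(3)\Rightarrow(1)$ directly and $(1)\Rightarrow(2)\wedge(3)$ from the computation above.

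So the core step, $(2)\Rightarrow(1)$ (and similarly $(3)\Rightarrow(1)$): suppose $I^0$, the injective hull of $A_A$, is projective. The socle of $A_A=\bigoplus_j P(j)$ is $\bigoplus$ of simples $S_\ell$ with $\ell$ ranging over maximal elements of $P$ (with multiplicity), so $I^0\supseteq\bigoplus I(\ell)$ over maximal $\ell$, and each $I(\ell)$ is projective, say $I(\ell)\cong P(f(\ell))$. Comparing socles and tops: $\soc I(\ell)=S_\ell$ and $\soc P(f(\ell))$ is a sum of $S_k$ over maximal $k$ — for this to be simple $=S_\ell$ we need $P(f(\ell))$ to have simple socle, and then $\top P(f(\ell))=\top I(\ell)=S_{g(\ell)}$ for some $g(\ell)\le\ell$. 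A module that is both indecomposable projective and indecomposable injective in an incidence algebra, by the basis description, must be $P(a)$ where $a$ is $\le$ everything that $a$ can reach and is reached-to by a unique maximal element; chasing the Hasse quiver using connectedness shows $g(\ell)$ is a global minimum and $\ell$ a global maximum, and uniqueness of simple socle forces all maximal $\ell$ to coincide, i.e.\ $M$ exists; a dual/parallel argument (or applying the same to the already-produced $P(m)$ structure) yields that $m$ exists. Hence $P$ is bounded. The analogous argument gives $(3)\Rightarrow(1)$ since a nonzero indecomposable projective-injective is exactly such a $P(a)$.

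Finally, for the last sentence: assuming $P$ bounded, we have shown $P(m)\cong I(M)$ is projective-injective, so $\domdim A\ge1$; and $I^0=P(m)$ is indecomposable while $I^1$ is not projective — because $\domdim A\ge2$ would make $I^1$ projective-injective, forcing a \emph{second} indecomposable projective-injective summand, contradicting the uniqueness just established (the only indecomposable projective-injective is $P(m)$, and it already appears as $I^0$; it cannot reappear as a summand of $I^1$ in a minimal coresolution). Hence $\domdim A=1$ exactly, and $P(m)$ is the unique indecomposable projective-injective module. The main obstacle throughout is the bookkeeping in the quiver-basis arguments identifying which $P(a)$ can be injective and forcing the global max/min to be unique; everything else is the formal $1$-Gorenstein$\,\leftrightarrow\,$"$I^0$ projective" dictionary already recalled in the preliminaries.
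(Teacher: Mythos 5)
Your treatment of the equivalence (1)$\Leftrightarrow$(2)$\Leftrightarrow$(3) is essentially sound, and it takes a slightly different route from the paper: you argue that an indecomposable module which is both projective and injective, $P(a)\cong I(\ell)$, has support (dimension vector) which is simultaneously an up-set and a down-set of $P$, hence all of $P$ by connectedness, so $a$ is a global minimum and $\ell$ a global maximum; the paper instead argues contrapositively, comparing the dimensions of $P(a)$ and $I(x)$ when no global maximum exists. Both work, but your ``chasing the Hasse quiver'' step is exactly the point that needs to be written out (via the dimension-vector comparison just indicated), and note that for (1)$\Rightarrow$(2) you also need the explicit observation that $\soc P(j)=S_M$ for every $j$, so that the injective hull of $A_A$ is a direct sum of copies of $I(M)\cong P(m)$ and in particular is projective.

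The genuine gap is in the last assertion, $\domdim A=1$. You argue that $\domdim A\geq 2$ would force $I^1$ to be projective-injective, hence a sum of copies of $P(m)$, and you dismiss this because $P(m)$ ``already appears as $I^0$'' and ``cannot reappear as a summand of $I^1$ in a minimal coresolution.'' No such principle holds: minimality of an injective coresolution does not prevent an indecomposable injective from occurring in consecutive terms (over $K[x]/(x^2)$, for instance, every term of the minimal injective resolution of the simple module is the same indecomposable injective); also $I^0$ is not indecomposable, being $\bigoplus_{j\in P}P(m)$ with one copy for each $j\in P$. To repair the argument you must actually show that $I^1$ contains no copy of $P(m)\cong I(M)$: the cokernel of the embedding $P(j)\to P(m)$ is $e_mA/p_j^mA$, and since $p_M^m=p_j^m\,p_M^j\in p_j^mA$, the simple $S_M$ is not a composition factor of this cokernel, so its injective hull has no summand $I(M)$; since some cokernel is non-zero (as $|P|\geq 2$) and $P(m)$ is the unique indecomposable projective-injective, $I^1$ is non-zero and not projective, giving $\domdim A=1$. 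The paper avoids this computation altogether by using the Morita--Tachikawa correspondence: if $\domdim A\geq 2$ then $A\cong \End_{eAe}(W)$ with $eA\cong P(m)$ and $eAe\cong K$ (the quiver is acyclic and $e$ is primitive), which would make $A$ semisimple, contradicting connectedness and $|P|\geq 2$.
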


\begin{proof}
Let $S_i$ denote the simple modules corresponding to the points of $P$.
We first show that (3) implies (1). Assume $P$ is not bounded. By duality we can assume there is no global maximum. Let $N=P(a)$ be the indecomposable projective module corresponding to the point $a$. Then the socle of $N$ is isomorphic to the direct sum of simple modules $S_{x_i}$, where $x_i$ are exactly those points in the poset which are larger than $a$ and are maximal, meaning that there are no larger elements in the poset than those $x_i$. Since there is no global maximum, the socle of $N$ is simple iff there is only one such $x_i$. Since indecomposable injective modules have a simple socle, in order for $N$ to be injective then, $N$ has to be isomorphic to $I(x_i)$. But $I(x_i)$ has vector space dimension larger than $N$ except when $a$ is a global minimum and in this case the socle is not simple since there is no global maximum. Thus every non-zero indecomposable projective module is not injective. Thus when $A$ has a non-zero projective-injective module, $P$ has to be bounded. \newline
Now we show that (1) implies (2). Let $P$ be bounded with global minimum $m$ and global maximum $M$.
First note that the socle of $P(m)$ is equal to $S_M$ since the unique longest path that starts at $m$ ends at $M$. Thus $P(m)$ embedds into the indecomposable injective module $I(M)$ and since both modules have the same vector space dimension, they must be isomorphic.
Then there is the projective-injective indecomposable module $P(m) \cong I(M)$ and injections $f_i: P(i) \rightarrow P(m)$ for any $i \neq m$ given by left multiplication by the unique path from $m$ to $i$. This shows that $A$ has dominant dimension at least one and that $P(m)$ is the unique indecomposable projective-injective module in that case.
That (2) implies (3) is trivial since every algebra with positive dominant dimension must contain a projective-injective module by definition. \newline
Now assume that the incidence algebra $A$ of a bounded poset $P$ has dominant dimension at least two. By the Morita-Tachikawa correspondence we have $A \cong \End_B(W)$, where $B \cong eAe$ when $e$ is an idempotent such that $eA \cong P(m)$ is the minimal faithful projective-injective module of $A$ and the $B$-module $W$ is a generator-cogenerator of $\mod B$. But then the algebra $eAe$ is isomorphic to the field $K$ because the quiver of $A$ is acyclic and $e$ is a primitive idempotent (since $A$ contains exactly one indecomposable projective-injective module) and thus with $B$ also $A$ would be semisimple as the endomorphism ring of a module over a semisimple algebra. But since we assume that $P$ is connected and has at least two elements, $A$ is not semisimple. This is a contradiction and thus the dominant dimension is equal to one.  \qedhere

\end{proof}

\begin{remark}
In \cite[Proposition 1.1]{BS}, a proof that a connected poset is a 1-Gorenstein algebra if and only if it is bounded can be found, while our result \ref{domdimtheorem} is more precise since it gives the exact value of the dominant dimension.
\end{remark}

\begin{corollary} \label{domdimlattice}
Let $P$ be a lattice. Then the incidence algebra of $P$ is 1-Gorenstein with dominant dimension precisely equal to one.
\end{corollary}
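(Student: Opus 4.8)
The plan is to derive Corollary \ref{domdimlattice} as an immediate consequence of Proposition \ref{domdimtheorem}. The only thing that needs to be verified is that the hypothesis of that proposition — boundedness of the poset — is automatic for a lattice. This is a standard fact in lattice theory: since a lattice $P$ is finite, the meet $\bigwedge_{x\in P} x$ of all its elements exists and serves as a global minimum $m$, and dually the join $\bigvee_{x\in P} x$ exists and serves as a global maximum $M$. (If one prefers to avoid infinitary meets and joins, one argues inductively: $a\wedge b$ lies below both $a$ and $b$, so iterating the binary meet over a finite listing of $P$ produces an element below everything, and dually for joins.) Hence $P$ is bounded in the sense of the preliminaries.

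First I would state that $P$, being a finite lattice, has a global minimum $m$ and a global maximum $M$, so $P$ is bounded. Then I would invoke the equivalence (1)$\Leftrightarrow$(2) of Proposition \ref{domdimtheorem} to conclude that the incidence algebra $A$ of $P$ is 1-Gorenstein, and the final sentence of that proposition to conclude that $\domdim A = 1$ exactly, with $P(m)$ the unique indecomposable projective-injective module. There is essentially no obstacle here; the statement is purely a specialization, and the entire content was already carried by Proposition \ref{domdimtheorem}. The one point worth a half-sentence of care is noting that a finite lattice is indeed bounded, since the notion of boundedness used in the paper (following \cite{G}) is phrased in terms of a global maximum and minimum rather than being built into the definition of a lattice.

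A short self-contained proof: Since $P$ is a finite lattice, the element $m:=\bigwedge_{x\in P}x$ is a global minimum and $M:=\bigvee_{x\in P}x$ is a global maximum, so $P$ is bounded. The claim now follows directly from Proposition \ref{domdimtheorem}.

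\begin{proof}
Since $P$ is a finite lattice, it has a global minimum $m:=\bigwedge_{x\in P}x$ and a global maximum $M:=\bigvee_{x\in P}x$, so $P$ is bounded in the sense above. Hence by Proposition \ref{domdimtheorem} the incidence algebra of $P$ is 1-Gorenstein with dominant dimension precisely one, and $P(m)$ is its unique indecomposable projective-injective module.
\end{proof}
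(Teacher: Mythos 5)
Your proof is correct and is exactly the route the paper intends: the corollary is stated without proof as a direct specialization of Proposition \ref{domdimtheorem}, the only needed observation being that a finite lattice is bounded (and, with the standing assumption of at least two elements, connected), which you supply. Nothing further is required.
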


The next theorem shows that a lattice whose incidence algebra is Auslander regular has to be distributive, which completes the proof that a lattice $L$ is distributive if and only if the incidence algebra of $L$ is Auslander regular.
\begin{theorem} \label{firstmainresult}
Let $L$ be a lattice with incidence algebra $A$.
\begin{enumerate}
\item[\rm(1)] If $L$ is not distributive, then $A$ is not 2-Gorenstein.
\item[\rm(2)] $L$ is distributive if and only if $A$ is Auslander regular.
\end{enumerate}
\end{theorem}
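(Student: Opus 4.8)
The plan is to prove part (1), from which part (2) follows immediately: by Theorem \ref{homologicaldimensionsandausreg} a distributive lattice has Auslander regular incidence algebra, and conversely if $L$ is not distributive then by (1) $A$ fails to be $2$-Gorenstein, hence is not Auslander regular. So the whole content is in showing that non-distributivity forces a failure of the $2$-Gorenstein condition. By Corollary \ref{domdimlattice} we already know $A$ is $1$-Gorenstein with $\domdim A = 1$, so $I^0 = P(m)$ is projective; the task is to exhibit a non-projective summand of $I^1$ in the minimal injective coresolution $0\to A\to I^0\to I^1\to\cdots$, i.e.\ to find an indecomposable injective $A$-module $I(x)$ appearing in $I^1$ with $\pdim I(x)\ge 2$. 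Equivalently, using that $I^1$ is determined by the minimal injective coresolutions of the projectives $P(y)$ (as in the proof of Theorem \ref{homologicaldimensionsandausreg}(1)), it suffices to produce some $y\in L$ such that $\Omega^{-1}(P(y))$ has an indecomposable injective summand $I(x)$ with $\pdim I(x)\ge 2$.

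The key tool is Theorem \ref{distributivecriterion}: if $L$ is not distributive it contains a pentagon or a diamond as a sublattice. First I would handle these two cases by locating, inside $L$, the relevant configuration and then analyzing an interval. For the diamond case, one finds an interval $[a,b]$ in $L$ that is itself a diamond (three pairwise incomparable elements between $a$ and $b$); since passing to an interval corresponds to taking $eAe$ for an idempotent $e$, and the $n$-Gorenstein property is inherited by such interval subalgebras in the relevant direction, it is enough to check that the incidence algebra of the diamond is not $2$-Gorenstein — and indeed, as already noted in the Example after Theorem \ref{distributivecriterion}, the diamond has $\gldim = 2$ but one checks directly that $I^1$ has a non-projective summand $I(x)$ of projective dimension $2$ (here $S_m$ has projective dimension $2$ but grade only... — precisely, the cosyzygy of $P(m')$ for a middle element $m'$ contains such an $I(x)$). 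For the pentagon case one argues similarly with the pentagon's incidence algebra. The cleaner route, which I would actually pursue, is to avoid case analysis on the whole lattice and instead use the sublattice $\{x\wedge y,\ x,\ y,\ z,\ x\vee y\}$ (pentagon) or $\{x\wedge y\wedge z,\ x,\ y,\ z,\ x\vee y\vee z\}$ (diamond) to produce two elements $u>v$ in $L$ with $|\cov(v)|\ge 2$ while $v$ does \emph{not} have the property that elements of $\min([v,M]_{L}^c)$ are meet-irreducible — this is exactly the distributivity-specific input used in Theorem \ref{homologicaldimensionsandausreg}(2), and its failure is what breaks minimality and hence the diagonal/2-Gorenstein structure.

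Concretely, here is the mechanism I expect to use. By Corollary \ref{resolutioninjectivesandsimplesgenerallattice} every injective $I(x)$ has $\pdim I(x)\le |\min([m,x]^c)|$, with the projective resolution given there; the resolution is minimal (Theorem \ref{homologicaldimensionsandausreg}(2)) precisely when the elements $\bigvee S$ for $S\subseteq\min([m,x]^c)$ are pairwise distinct, which holds in a distributive lattice by uniqueness of irredundant join-irreducible decompositions (Lemma \ref{irredundantdecomposition}) but fails in general. When $L$ contains a diamond with elements $a<c_1,c_2,c_3<b$, take $x$ to be the meet-irreducible-type element so that $\{c_1,c_2\}\subseteq\min([m,x]^c)$ but $c_1\vee c_2 = c_1\vee c_3 = b$, forcing a cancellation: the map $P_2\to P_1$ in the resolution of $I(x)$ is not radical, so $\pdim I(x)$ drops, and tracking this through the dual statement Theorem \ref{homologicaldimensionsandausreg}(3) shows that the cosyzygy of some $P(y)$ contains $I(x)$ with $\pdim I(x)\ge 2$ wrongly placed — more precisely, one exhibits an indecomposable injective summand of $I^1$ with projective dimension $\ge 2$, contradicting the $2$-Gorenstein condition $\pdim I^1\le 1$. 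I would carry this out in detail for both the diamond and the pentagon (the pentagon case is entirely analogous, using its failure of modularity in place of the cancellation of joins). The main obstacle will be the bookkeeping needed to pass from "a bad sublattice of type pentagon or diamond exists" to "a specific indecomposable injective appears in $I^1$ with projective dimension $\ge 2$": a sublattice is not an interval, so one must first massage the configuration — replacing $x\wedge y$ by a meet of a suitable subset, and $x\vee y$ by a join — to obtain an actual interval of $L$ of the required shape, and then invoke that the minimal injective coresolution of $A$ restricted to that interval computes a genuine chunk of $I^1$; verifying that this restriction is compatible (no accidental cancellation with terms from outside the interval) is the delicate point.
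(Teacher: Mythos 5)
Your high-level plan (deduce (2) from (1) via Theorem \ref{homologicaldimensionsandausreg}, then use Theorem \ref{distributivecriterion} to get a pentagon or diamond and exhibit an indecomposable injective summand of $I^1$ with projective dimension at least $2$) agrees with the paper, but the concrete mechanism you sketch has genuine gaps, and you flag the crucial step yourself as an ``obstacle'' without resolving it. First, the reduction to an interval does not work as stated: a pentagon or diamond sublattice need not be (nor sit inside) an interval of the same shape, and the claim that the $2$-Gorenstein property is inherited by $eAe$ for the idempotent of an interval is asserted without proof; no such transfer result is available in the paper, and Gorenstein-type conditions do not pass to arbitrary corner algebras. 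Second, your ``cancellation'' mechanism points in the wrong direction: the failure of uniqueness of joins $\bigvee S$ only shows that the resolution of Corollary \ref{resolutioninjectivesandsimplesgenerallattice} may fail to be \emph{minimal}, which can only \emph{lower} $\pdim I(x)$; it cannot by itself produce the lower bound $\pdim I(x)\ge 2$, nor does it tell you that this particular $I(x)$ occurs in $I^1$. Third, you invoke Theorem \ref{homologicaldimensionsandausreg}(3) to locate $I(x)$ in the cosyzygy of some $P(y)$, but that statement is proved only for distributive lattices and is exactly what is unavailable in the non-distributive situation you are analysing.

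What is actually needed, and what the paper does, is a direct computation inside $L$ with no interval reduction. Using Proposition \ref{domdimtheorem}, the injective envelope of every $P(y)$ is $P(m)$ and the projective cover of every $I(x)$ is $P(m)$. For the diamond (and analogously the pentagon) one fixes the configuration $a_1<b_1,b_2,b_3<a_2$, the covers $d_i$ of $a_1$ below $b_i$ and the elements $c_i$ covered by $a_2$ above $b_i$, and considers the exact sequence $0\to P(b_1)\to P(m)\to U\to 0$ with $U=e_mA/p_{b_1}^mA$. One checks by hand that $p_{c_2}^m$ is a socle element of $U$, so $I(c_2)$ is a direct summand of the injective envelope of $\Omega^{-1}(P(b_1))$, i.e.\ of $I^1$. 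Then one shows $\pdim I(c_2)\ge 2$ by an explicit path argument: the kernel $K$ of the projective cover $P(m)\to I(c_2)$ contains the two paths $p_{d_1}^m$ and $p_{d_3}^m$, every submodule of $P(m)$ has simple socle and is therefore indecomposable, and an indecomposable projective submodule of $e_mA$ is cyclic (generated by a single path); since $K$ is not cyclic it is not projective. This contradicts the $2$-Gorenstein inequality $\pdim I^1\le 1$. Your proposal never supplies these two concrete verifications (the socle computation placing $I(c_2)$ in $I^1$, and the non-projectivity of the syzygy of $I(c_2)$), which constitute the entire content of part (1).
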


\begin{proof}
Thanks to \ref{homologicaldimensionsandausreg}(1), it suffices to prove (1).
Assume $L$ is not distributive and has a global maximum $M$ and a global minimum $m$.
For the proof note that an algebra $A$ satisfies the 2-Gorenstein condition if and only if for every indecomposable projective module $P(i)$ we have $\pdim(J^l) \leq l$ for $l =0$ and $l=1$ when 
$$0 \rightarrow P(i) \rightarrow J^0 \rightarrow J^1 \rightarrow \cdots $$
denotes a minimal injective coresolution of $P(i)$. Also recall that in \ref{domdimtheorem} we saw that each injective envelope of an indecomposable projective module is equal to $P(m)$ and dually the projective cover of each indecomposable injective module is equal to $P(m) \cong I(M)$.
By \ref{distributivecriterion}, $L$ not being distributive means that $L$ has a sublattice isomorphic to a diamond or a pentagon. We look at both cases separately. \newline
\underline{Case 1:} Assume $L$ has a sublattice isomorphic to the diamond.
We can picture the situation as follows when looking at the Hasse quiver $Q$ of $L$: 
\[\begin{tikzpicture}[scale=0.5]

\node (v1) at (-16.5,23.5) {$m$};

\node (v2) at (-16.5,22) {$a_1$};

\node (v3) at (-16.5,20.5) {$d_2$};

\node (v4) at (-16.5,19) {$b_2$};

\node (v5) at (-16.5,17.5) {$c_2$};

\node (v6) at (-16.5,16) {$a_2$};

\node (v7) at (-16.5,14.5) {$M$};

\node (v8) at (-18.5,20.5) {$d_1$};

\node (v9) at (-20.5,19) {$b_1$};

\node (v10) at (-18.5,17.5) {$c_1$};

\node (v11) at (-14.5,20.5) {$d_3$};

\node (v12) at (-12.5,19) {$b_3$};

\node (v13) at (-14.5,17.5) {$c_3$};

\draw [->, dotted] (v1) edge (v2);

\draw [->, dotted] (v2) edge (v3);

\draw [->, dotted] (v3) edge (v4);

\draw [->, dotted] (v4) edge (v5);

\draw [->, dotted] (v5) edge (v6);

\draw [->, dotted] (v6) edge (v7);

\draw [->, dotted] (v2) edge (v8);

\draw [->, dotted] (v8) edge (v9);

\draw [->, dotted] (v9) edge (v10);

\draw [->, dotted] (v10) edge (v6);

\draw [->, dotted] (v2) edge (v11);

\draw [->, dotted] (v11) edge (v12);

\draw [->, dotted] (v12) edge (v13);

\draw [->, dotted] (v13) edge (v6);
\end{tikzpicture}\]
Here $a_1$ is the infimum of $b_1, b_2$ and $b_3$ and $a_2$ is the supremum of $b_1, b_2$ and $b_3$. Note that we possibly can have that $a_1=m$ and $a_2=M$.
For $i=1,2,3$, $d_i$ is the point between $a_1$ and $b_i$ that covers $a_1$ and $c_i$ is the point between $b_i$ and $a_2$ that is covered by $a_2$.
Let 
$$0 \rightarrow P(b_1) \xrightarrow{g} P(m) \rightarrow U \rightarrow 0$$
be the short exact sequence such that the left map $g$ is the injective envelope map of the projective non-injective module $P(b_1)$. The map $P(b_1) \rightarrow P(m)$ is given by left multiplication by the unique path $p_{b_1}^m$ from $m$ to $b_1$ and thus the cokernel $U$ of this map is isomorphic to $e_m A/ p_{b_1}^mA$.
Now note that the path $p_{c_2}^m$ from $m$ to $c_2$ is non-zero in $e_m A/ p_{b_1}^mA$ but $p_{c_2}^m x=0$ in $e_m A/ p_{b_1}^mA$ for any element $x$ in the radical of $A$. Thus the socle of the module $e_m A/ p_{b_1}^mA$ contains the simple module $S_{c_2}$ as a direct summand. This implies that the injective envelope of the module $e_m A/ p_{b_1}^mA$ has the indecomposable injective module $I(c_2)$ corresponding to the point $c_2$ as a direct summand. We will show that $I(c_2)$ has projective dimension at least two in order to finish the proof in case 1.
In the short exact sequence
$$0 \rightarrow K \rightarrow P(m) \xrightarrow{h} I(c_2) \rightarrow 0,$$
the right map $h$ is the projective cover of $I(c_2)$ and given by left multiplication by the element $(p_{c_2}^m)^{*}$ when $(p_{c_2}^m)^{*}$ denotes the vector space dual of the path $p_{c_2}^m$. This follows from the fact that $\top(I(c_2))=\top(D(A e_{c_2}))=D(\soc(A e_{c_2}))=D(\langle p_{c_2}^m \rangle )$.
Now the kernel $K$ of $h$ contains the path from $m$ to $d_1$, $u_1= p_{d_1}^m$, and the path from $m$ to $d_3$ $u_2=p_{d_3}^m$. This is because for $a, a' \in A$ we have: $h(e_m a)(a' e_{c_2})=(p_{c_2}^m)^{*}(e_m a a' e_{c_2})=0$ for all $a' \in A$ if $a=p_{d_1}^m$, since $p_{c_2}^m$ does not factor as a path over $u_1=p_{d_1}^m$ and similarily for $u_2$. Note that since the socle of $P(m)$ is simple, every submodule of $P(m)$ also has simple socle and thus is indecomposable.
That the kernel $K$ contains $u_1$ and $u_2$ gives us that $K$ is not a cyclic module generated by a path and thus $K$ can not be projective, since every indecomposable projective submodule of $e_m A$ is of the form $pA$ for some path $p$ and thus especially a cyclic module. As the kernel $K$ of the projective cover $h$ of $I(c_2)$ is not projective, $I(c_2)$ has projective dimension at least two. This finishes the proof for case 1. \newline
\underline{Case 2:} Now assume that $L$ has a sublattice isomorphic to a pentagon.
We can picture the situation as follows when looking at the Hasse quiver $Q$ of $L$:
\[\begin{tikzpicture}[scale=0.5]
\node (v1) at (-16.5,23.5) {$m$};

\node (v2) at (-16.5,22) {$a_1$};

\node (v6) at (-16.5,16) {$a_2$};

\node (v7) at (-16.5,14.5) {$M$};

\node (v8) at (-18.5,20.5) {$b_1$};

\node (v10) at (-18.5,17.5) {$c_1$};

\node (v11) at (-14.5,20.5) {$b_2$};

\node (v12) at (-14.5,19) {$c_2$};

\node (v13) at (-14.5,17.5) {$b_3$};

\draw [->, dotted] (v1) edge (v2);

\draw [->, dotted] (v6) edge (v7);

\draw [->, dotted] (v2) edge (v8);

\draw [->, dotted] (v8) edge (v10);

\draw [->, dotted] (v10) edge (v6);

\draw [->, dotted] (v2) edge (v11);

\draw [->, dotted] (v11) edge (v12);

\draw [->, dotted] (v12) edge (v13);

\draw [->, dotted] (v13) edge (v6);
\end{tikzpicture}\]
Here $a_1$ is the infimum of $b_1$ and $b_2$ and $a_2$ is the supremum of $b_1$ and $b_3$. Note that we possibly can have $m=a_1$ and $M=a_2$.
$c_2$ is covered by $b_3$ and is between $b_2$ and $b_3$ and $c_1$ covers $b_1$ and is between $b_1$ and $a_2$.
Let 
$$0 \rightarrow P(b_3) \xrightarrow{g} P(m) \rightarrow U \rightarrow 0$$
be the short exact sequence such that $g$ is the injective envelope of $P(b_3)$. Then $g$ is given by left multiplication with the unique path $p_{b_3}^m$ from $m$ to $b_3$ and thus $U= e_m A/ p_{b_3}^m A$. Now the path $p_{c_2}^m$ from $m$ to $c_2$ is nonzero in $e_m A/ p_{b_3}^m A$ and $p_{c_2}^m x=0$ in $e_m A/ p_{b_3}^m A$ for every $x$ in the radical of $A$ and thus the socle of $U$ contains $S_{c_2}$. This shows that the injective envelope of $e_m A/ p_{b_3}^m A$ contains $I(c_2)$ as a direct summand. We show that the projective dimension of the indecomposable injective module $I(c_2)$ corresponding to the point $c_2$ is at least two.
Let 
$$0 \rightarrow K \rightarrow P(m) \xrightarrow{h} I(c_2) \rightarrow 0$$
be the exact sequence such that $h$ is the projective cover of $I(c_2)$. Then $h$ is given by left multiplication with the vector space dual of the the path $p_{c_2}^m$. We again show that $K$ is not a cyclic module to finish the proof.
In order to see this just note that the kernel $K$ of $h$ contains the paths $p_{b_3}^m$ from $m$ to $b_3$ and $p_{b_1}^m$ from $m$ to $b_1$ and as in case 1 one concludes that $K$ can not be cyclic and thus not be projective. This finishes the proof of case 2. 
\end{proof}

We give an example showing that there are posets that contain a pentagon as a subposet but whose incidence algebra is Auslander regular. This shows that the proof of \ref{firstmainresult} can not be extented to general bounded posets.
\begin{example} \label{posetexample1}
The following poset $P$ with Hasse quiver $Q$ is a bounded poset, which contains a pentagon as a subposet and is not a lattice. Its incidence algebra $A$ is Auslander regular and has global dimension equal to 3.
\[\begin{tikzpicture}[scale=0.5]
\node (v1) at (-19.5,24.5) {$\circ$};

\node (v2) at (-21.5,23) {$\circ$};

\node (v3) at (-19.5,21.5) {$\circ$};

\node (v4) at (-17.5,23) {$\circ$};

\node (v5) at (-17.5,20) {$\circ$};

\node (v6) at (-21.5,20) {$\circ$};

\node (v7) at (-19.5,18.5) {$\circ$};

\draw [->] (v1) edge (v2);

\draw [->] (v1) edge (v4);

\draw [->] (v2) edge (v3);

\draw [->] (v2) edge (v6);

\draw [->] (v4) [bend right=30] edge (v6);

\draw [->] (v4) edge (v5);

\draw [->] (v3) edge (v5);

\draw [->] (v6) edge (v7);

\draw [->] (v5) edge (v7);

\end{tikzpicture}\]
\end{example}

\section*{Acknowledgments} 
The first author is supported by JSPS Grant-in-Aid for Scientific Research (B) 16H03923, (C) 18K3209 and (S) 15H05738.
Rene Marczinzik is funded by the DFG with the project number 428999796.
We profited from the use of the GAP-package \cite{QPA} and \cite{SAGE}.

\end{document}